\renewcommand{\a}{\alpha}
\newtheorem{theorem}{Theorem}[section]
\newtheorem{theorem*}[theorem]{Theorem}
\newtheorem{lemma}[theorem]{Lemma}
\newtheorem{proposition}[theorem]{Proposition}
\newtheorem{corollary}[theorem]{Corollary}
\newtheorem{example}{Example}
\theoremstyle{remark}
\newtheorem{remark}[theorem]{Remark}
\numberwithin{equation}{section}
\newcommand{\p}{\partial}
\newcommand{\Der}{\text{Der}}
\renewcommand{\a}{\alpha}
\newcommand{\R}{\mathbb R}
\newcommand{\C}{\mathbb C}
\newcommand{\A}{\mathcal A}
\newcommand{\N}{\mathbb N}
\def\beq#1#2\eeq{%
        \begin{equation}%
        \label{#1}%
            #2%
        \end{equation}%
    }
\begin{document}

\title{Free reflection multiarrangements and quasi-invariants}

\author{T. Abe, N. Enomoto, M. Feigin, M. Yoshinaga}

\email{abe.takuro.831@m.kyushu-u.ac.jp,
enomoto-naoya@uec.ac.jp, misha.feigin@glasgow.ac.uk,
yoshinaga@math.sci.osaka-u.ac.jp}

\begin{abstract}
To a complex reflection arrangement with an invariant multiplicity function one can relate the space of logarithmic vector fields and the space of quasi-invariants, which are both modules over invariant polynomials. 
We establish a close relation between these modules. Berest--Chalykh freeness results for the module of quasi-invariants lead to new free complex reflection multiarrangements. K. Saito's primitive derivative gives a linear map between certain spaces of quasi-invariants.

We also establish a close relation between non-homogeneous quasi-invariants for root systems  and logarithmic vector fields for the extended Catalan arrangements. As an application, we prove the freeness of Catalan arrangements corresponding to the non-reduced root system $BC_N$.  

\end{abstract}

\maketitle

\section{Introduction}

Let us consider firstly a finite real reflection group $W$, and let $\mathcal A$ be the corresponding Coxeter arrangement of hyperplanes in a vector space $V$, and $m\colon {\mathcal A} \to \N$ a $W$-invariant multiplicity function. To this data one can associate a ring of $m$-{\it quasi-invariant} polynomials $Q_m = Q_{m, {\mathcal A}} \subseteq SV^*$. 
Such a ring appeared first in the work of Chalykh and Veselov on quantum Calogero--Moser systems \cite{CV}. In that context the ring $Q_m$ was isomorphic to the commutative ring of differential operators containing the second order Hamiltonian of Calogero--Moser type at integer coupling parameters. This ring contains the ring of $W$-invariant polynomials $(SV^*)^W$, and it is also a $W$-module. 

Algebraic properties of rings of quasi-invariants were investigated by Feigin and Veselov in \cite{FeiV} where the term was introduced. It was shown that in the  two-dimensional case with constant multiplicity function the rings were graded Gorenstein and it was conjectured that the same holds in general. In particular, the rings were expected to be free modules over invariants $(SV^*)^W$. This was proven by Etingof and Ginzburg in \cite{EG}. Hilbert series for related $m$-harmonic polynomials were found by Felder and Veselov in \cite{FV}, which led to the results on Hilbert series of quasi-invariants and the statement that quasi-invariants are graded Gorenstein \cite{EG}.  Another proof of Cohen--Macaulay property on free generation was found by Berest, Etingof and Ginzburg in \cite{BEG}. It is based on the observation that quasi-invariants form a module over the spherical subalgebra of the rational Cherednik algebra $e{\mathcal H}_c(W)e$ at integer value of the parameter $c=m$. The module of quasi-invariants $Q_m$ can be decomposed as a direct sum of submodules corresponding to different irreducible representations of $W$, and any representation of the algebra $e{\mathcal H}_m(W)e$ from category $\mathcal O$ can be obtained as a direct sum of such subrepresentations of $Q_m$.

Quasi-invariants $Q_m$ for an arbitrary finite complex reflection group $W$ acting by compositions of reflections in a complex vector space $V$ were introduced by Berest and Chalykh in \cite{BC}. In this case $Q_m$ is also a free module over the subspace of $W$-invariants $(SV^*)^W$ but in general quasi-invariants do not form a ring. It was shown in \cite{BC} that quasi-invariants form a representation of the corresponding spherical rational Cherednik algebra $e{\mathcal H}_m(W)e$, which implied the freeness statement,  and the Hilbert series of $Q_m$ were found. 
Quasi-invariants are defined in \cite{BC} for more general vector-valued $W$-invariant multiplicity functions so that up to $n_H-1$ values are associated with $H\in \A$, where $n_H$ is the size of cyclic subgroup of $W$ which fixes $H$ pointwise. In this paper we will deal with a special case where we assume that all these values are the same.

Furthermore, vector-valued quasi-invariants ${\bf Q}_m(\tau)$ were introduced and studied in \cite{BC} for any $W$-module $\tau$. These quasi-invariants are modules over full rational Cherednik algebra ${\mathcal H}_m(W)$.

On the other hand there is a theory of free hyperplane arrangements. The notion goes back to the work \cite{Sa1} of K. Saito. 
The notion of logarithmic vector fields for arrangements with integral multiplicity function was introduced by  Ziegler \cite{Z}. 
The freeness of any Coxeter multiarrangement with a constant multiplicity function $m$ is proved in  \cite{T}, and that with a general invariant multiplicity functions is proved in \cite{ATWaka}. 

In the case of complex reflection multiarrangements freeness was studied by Hoge, Mano,  R\"ohrle and Stump in \cite{HMRS}. The authors considered two types of multiplicity functions: $r_1 = m n +1$ and  $r_2 = m n$, where
$n(H) = n_H$ is the order of stabilizer of the hyperplane $H \in {\mathcal A}$ and $m \in \N$ is constant. The corresponding multiarrangements were shown to be free for well-generated complex reflection groups $W$ and for the infinite series $W=G(r, p, N)$, and degrees of free basis elements were determined. 
Since all multiarrangements on the plane are free the only case with these multiplicities when freeness remained open was the exceptional group $W=G_{31}$, which is not a well-generated group of rank 4. 

In this paper we relate theory of quasi-invariants with theory of logarithmic vector fields for reflection arrangements.
We consider the logarithmic vector fields $D_m$ and $\widetilde D_m$ for multiplicity functions $r_1$ and $r_2$ as above, respectively, where $m\colon {\mathcal A}\to \N$ is an arbitrary $W$-invariant function.
 Let $Q_m^{\tau}$ be the isotypic component in $Q_m$ of an irreducible $W$-module $\tau$. Then a relation with the logarithmic vector fields is given by the following isomorphism:
\begin{equation}
\label{isom1}
Q_m^{V^*}\cong D_m^W\otimes V^*,
\end{equation}
which is an isomorphism of $(SV^*)^W\otimes \C W$-modules. Also for quasi-invariants 
${\bf Q}_m(V)$ 
with values in reflection representation $V$ we have an isomorphism
\begin{equation}
\label{isom2}
{\bf Q}_m(V)\cong \widetilde D_m.
\end{equation}
This is an isomorphism of both $S V^*$ and $\C W$-modules but the actions of these algebras do not commute. 
As a consequence of isomorphisms \eqref{isom1}, \eqref{isom2} we derive freeness of complex reflection arrangements with multiplicities $r_1, r_2$ for any complex reflection group $W$ and $W$-invariant function $m$ (Section \ref{IsomFree}). The Hilbert series of quasi-invariants were determined in \cite{BC} in terms of KZ twists which are special permutations of irreducible representations of $W$ defined via monodromy of Knizhnik--Zamolodchikov connections \cite{Op}.
This leads to the corresponding expressions of degrees of free generators of logarithmic vector fields (cf. \cite{HMRS}).

Another consequence of isomorphisms \eqref{isom1}, \eqref{isom2} is the action of the primitive vector field on the corresponding quasi-invariants.
We consider this action in Section \ref{quasi-prim}.

In Section \ref{affine} we explain relation between non-homogeneous (trigonometric) version of quasi-invariants and logarithmic vector fields for the extended Catalan arrangements associated with Weyl groups. In the case of the non-reduced root system $BC_N$ we get new free arrangements.

\section{Preliminaries on quasi-invariants and reflection arrangements}

Let $W$ be an irreducible finite complex reflection group and $V\cong \C^N$, $N\in \N$ its reflection representation. Let $e_1, \ldots, e_N$ be the standard basis in $V$.  Let $\A\subseteq V$ be the corresponding reflection arrangement of hyperplanes. For each $H \in \A$ let $n_H\in \N$ be the size of the stabiliser $W_H$ of $H$ in $W$.   Let $\alpha_H \in V^*$ be such that the hyperplane $H$ is given as $H =\{x\in V\colon \a_H(x)=0\}$.

Let $m\colon \A \to \N$ be a $W$--invariant (multiplicity) function. We denote $m(H)$ as $m_H$, $H\in \A$.
Likewise we consider the order of the stabilizer $W_H$ as the multiplicity function $n\colon {\mathcal A}\to \N$,   $n_H = n(H)$.




Let $\Der$ be the set of $\C$-linear maps $L\colon SV^* \to SV^*$ that satisfy the Leibniz rule: $L(\varphi \psi)=L(\varphi)\psi+\varphi L(\psi)$
 for all $\varphi,\psi \in SV^*$. Pick the standard basis $x_1, \ldots ,x_N$ in $V^*$ such that $x_i(e_j)=\delta_{ij}$. 
 We can consider a derivation $L \in \Der$ as a vector field on $V$ with polynomial coefficients, 
 that is we can write $L=\sum_{i=1}^{N}f_i\partial_i$,  where $f_i \in SV^* \cong \C[x_1, \ldots ,x_N]$ and $\partial_i=\partial_{x_i}$.  
Since the group $W$ acts on $SV^*$, we say that $L \in \Der$ is $W$-invariant if and only if $L(w\varphi)=w(L\varphi)$ for all $w \in W$ and $\varphi \in SV^*$.
In other words, $W$ acts on $\Der$ by $(wL)(\varphi)=w(L(w^{-1}\varphi))$ for $\varphi \in SV^*$ and $w \in W$. 
We denote by $\Der^W$ the set of $W$-invariant derivations in $\Der$.

Before going further, define the logarithmic vector fields of an (affine) arrangement
$\mathcal{B}$ with the multiplicity function $r\colon \mathcal{B} \to \N$ as follows:
$$
D(\mathcal{B}, r) = \{L \in \Der\colon L(\alpha_H) \in \alpha_H^{r(H)} SV^* \,  \text { for any } \,  H \in \mathcal{B}\}. 
$$
A pair $(\mathcal{B},r)$ is called a multiarrangement, and its logarithmic vector field 
$D(\mathcal{B},r)$ is an $SV^*$-graded module, which is  not free in general. We say that the multiarrangement $(\mathcal{B},r)$ is free if $D(\mathcal{B},r)$ is free as an $SV^*$-graded module. We say that a derivation $0 
\neq \theta \in \Der$ is 
homogeneous of degree $\deg\theta=d$ if $\theta(\alpha)$ is either zero or a homogeneous polynomial of degree $d$ for all $\alpha \in V^*$. When $(\mathcal{B},r)$ is central and free, we can choose a homogeneous basis $\theta_1,\ldots,\theta_N$ for $D(\mathcal{B},r)$. Then the exponents 
of a free multiarrangement $(\mathcal{B},r)$
are defined as a multiset $(\deg \theta_1,\ldots,\deg\theta_N)$. 

Now let us come back to a reflection multiarrangement $(\A,r)$. We will be interested in multiplicity functions $r$ of the form $r = m n + 1$ and $r = m n$. We denote
$$
D_m = D({\mathcal A}, mn +1), \quad {\widetilde D}_m = D({\mathcal A}, m n).
$$
Note that $(wL)(\alpha_H)=w(L(w^{-1}\alpha_H))=w(\alpha_{w^{-1}H}^{r(w^{-1}H)}p)=\alpha_H^{r(w^{-1}H)}w(p)$ for some $p \in SV^*$. Since $mn+1$ and $mn$ are  $W$-invariant multiplicity functions, 
$W$ acts on $D_m$ and $\widetilde{D}_m$.
Let $D_m^W$ be the corresponding space of $W$-invariant elements from $D_m$.

Consider elements
$$
e_{H, i} =  \sum_{w \in W_H} (\det w)^i w \in \C W_H, \quad i=1, \ldots, n_H-1,
$$
where $\det w$ is the determinant of the corresponding map $w\colon V \to V$.

Following \cite{BC} we have the following definition. A polynomial $p\in SV^*$ is called  {\it $m$-quasi-invariant} if the polynomial $
e_{H, i} (p) 
$
is divisible by $\alpha_H^{n_H m_H}$ for all $H\in \A$, $1\le i \le n_H-1$. 
This condition can also be reformulated in the following way.
\begin{proposition}\label{propidemp}
Let $s_H$ be a generator of the cyclic group $W_H$. A polynomial $p$ is m-quasi-invariant if and only if 
\begin{equation}
\label{quasicond}
\a_H^{m_H n_H} | (1- s_H) p
\end{equation}
for all $H\in \A$. 
\end{proposition}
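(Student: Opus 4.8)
The plan is to diagonalise the action of $s_H$ on $SV^*$ and to recognise that the operators $e_{H,i}$ and $1-s_H$ both annihilate the $W_H$-invariant part of a polynomial in a way compatible with divisibility by powers of $\a_H$. Fix $H\in\A$ and abbreviate $n=n_H$, $s=s_H$. The group $W_H=\langle s\rangle$ is cyclic of order $n$, and the character $\det\colon W_H\to\C^\times$ is faithful (a standard fact: $W_H$ acts faithfully on the line $V/H$, and that action is $\det|_{W_H}$), so $\zeta:=\det s$ is a primitive $n$-th root of unity. For $j\in\Z/n\Z$ set $\pi_j=\tfrac1n\sum_{k=0}^{n-1}\zeta^{-jk}s^k\in\C W_H$, the projector onto the $\zeta^j$-eigenspace of $s$ acting on $SV^*$, so that $\sum_j\pi_j=1$ and $s=\sum_j\zeta^j\pi_j$. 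A direct computation in $\C W_H$ then yields the two identities
\[
e_{H,i}=\sum_{k=0}^{n-1}\zeta^{ik}s^k=n\,\pi_{-i}\quad(1\le i\le n-1),
\qquad
1-s=\sum_{j=1}^{n-1}(1-\zeta^j)\,\pi_j.
\]

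Granting for the moment that divisibility by a power of $\a_H$ can be tested componentwise in the eigenspace decomposition $SV^*=\bigoplus_{j}\pi_j(SV^*)$, the proposition follows at once. Since $i\mapsto -i$ permutes $\{1,\dots,n-1\}$ modulo $n$, the first identity shows that $\a_H^{m_Hn}\mid e_{H,i}(p)$ for all $1\le i\le n-1$ if and only if $\a_H^{m_Hn}\mid\pi_j(p)$ for all $j\in\{1,\dots,n-1\}$. Applying $\pi_j$ to $(1-s)p$ and using that $1-\zeta^j\ne0$ for $j\not\equiv0\bmod n$, the second identity shows that $\a_H^{m_Hn}\mid(1-s)p$ is likewise equivalent to $\a_H^{m_Hn}\mid\pi_j(p)$ for all $j\in\{1,\dots,n-1\}$. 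Hence $m$-quasi-invariance of $p$ at $H$ and condition \eqref{quasicond} at $H$ are equivalent, and taking the conjunction over all $H\in\A$ finishes the proof.

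The substantive point — the step I expect to be the real obstacle — is the following lemma: for $q\in SV^*$ and $a\ge0$ one has $\a_H^a\mid q$ if and only if $\a_H^a\mid\pi_j(q)$ for every $j\in\Z/n\Z$. The nontrivial direction relies on $\a_H$ being itself an eigenvector of $s$ with a primitive eigenvalue: since $s$ is a reflection fixing $H\subset V$ pointwise, on $V^*$ the eigenvalue $1$ has multiplicity $N-1$ and the remaining eigenvalue is $\ne1$ with eigenvector $\a_H$, and it equals $\zeta^{j_0}$ for some $j_0$ coprime to $n$, again by faithfulness of $\det|_{W_H}$. Hence multiplication by $\a_H^a$ carries the $\zeta^\ell$-eigenspace into the $\zeta^{\ell+aj_0}$-eigenspace. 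Consequently, if $q=\a_H^a r$ with $r=\sum_j\pi_j(r)$, then $\pi_\ell(q)=\a_H^a\,\pi_{\ell-aj_0}(r)$ for each $\ell$, so $\a_H^a\mid\pi_\ell(q)$ for all $\ell$; the converse is immediate from $q=\sum_j\pi_j(q)$. I would establish this lemma first, since everything else is routine bookkeeping with the idempotents $\pi_j$.
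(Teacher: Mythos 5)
Your proof is correct and is in substance the same as the paper's: both arguments reduce to linear algebra in $\C W_H$ showing that the $\C$-spans of $\{e_{H,i}\}_{i=1}^{n_H-1}$ and of a family built from $1-s_H$ coincide, combined with the observation that divisibility by a power of $\a_H$ is preserved under the $W_H$-action (your eigenspace-shift lemma; the paper's ``apply $s_H$'' step). The only cosmetic difference is that you diagonalise via the idempotents $\pi_j$, whereas the paper works with the basis $\{1-s_H^k\}_{k=1}^{n_H-1}$ and inverts the Vandermonde-type matrix $A=(\xi^{ij})$ relating it to the $e_{H,i}$.
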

\begin{proof}
It follows by applying $s_H$ that $\a_H^{n_H m_H}| (1-s_H)p$ if and only if $\a_H ^{n_H m_H}| (1-s_H^k)p$ for all $k \in \N$.
Consider $(n_H-1)\times (n_H-1)$ matrix $A=(a_{ij})_{i,j=1}^{n_H-1}$, where $a_{ij} = \xi^{ij}$, and $\xi = \det s_H$ is a primitive root of order $n_H$. Let 
$$
e=(e_{H,1}, \ldots, e_{H, n_H-1}), \quad s = (1-s_H, \ldots, 1-s_H^{n_H-1}).
$$
It is easy to see that $A s^T =  - e^T$. The statement follows since $\det A \ne 0$. 
\end{proof}
Note that condition \eqref{quasicond} is equivalent to $\a_H^{m_H n_H+1} | (1- s_H) p$. We denote the space of all $m$-quasi-invariants as $Q_m$. 

\begin{proposition}\label{qmring}
The space $Q_m$ is a ring.
\end{proposition}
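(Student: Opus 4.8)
The plan is to show that $Q_m$ is closed under multiplication; the fact that it is a $\C$-subspace of $SV^*$ containing $1$ is immediate from the definition, and the key structural input is the criterion of Proposition~\ref{propidemp}, namely that $p \in Q_m$ if and only if $\alpha_H^{m_H n_H} \mid (1-s_H)p$ for every $H \in \A$. So I would fix $H \in \A$, write $s = s_H$, $k = m_H n_H$, and $\alpha = \alpha_H$, and verify that if $\alpha^k \mid (1-s)p$ and $\alpha^k \mid (1-s)q$, then $\alpha^k \mid (1-s)(pq)$. The obstruction to this being literally a derivation-style identity is that $s$ is an algebra automorphism, not a derivation, so the naive Leibniz rule does not apply; one must instead use the multiplicative ``twisted Leibniz'' identity.

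The main computation is the identity
\begin{equation}
\label{twistedleib}
(1-s)(pq) = \bigl((1-s)p\bigr)\, q + (sp)\,\bigl((1-s)q\bigr),
\end{equation}
which follows directly from $s(pq) = (sp)(sq)$. Given \eqref{twistedleib}, divisibility of the right-hand side by $\alpha^k$ is immediate: the first term is divisible by $\alpha^k$ because $\alpha^k \mid (1-s)p$ by hypothesis, and the second term is divisible by $\alpha^k$ because $\alpha^k \mid (1-s)q$ by hypothesis. Hence $\alpha^k \mid (1-s)(pq)$. Since $H$ was arbitrary, Proposition~\ref{propidemp} gives $pq \in Q_m$, and therefore $Q_m$ is a ring.

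I do not expect any serious obstacle here: once one passes to the single-reflection criterion \eqref{quasicond}, the argument is a one-line application of \eqref{twistedleib}. The only point requiring a little care is to make sure the criterion is applied uniformly over all $H \in \A$ and that one uses the formulation in terms of the chosen generator $s_H$ rather than the original condition involving the idempotents $e_{H,i}$; Proposition~\ref{propidemp} makes this legitimate. (As a remark for the write-up, essentially the same computation shows that $Q_m$ is in fact a module over $SV^*$-elements that are ``locally'' divisible appropriately, but for the present statement only closure under the ring multiplication of $SV^*$ is needed.)
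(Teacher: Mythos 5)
Your proposal is correct and uses exactly the same twisted Leibniz identity $(1-s_H)(pq) = \bigl((1-s_H)p\bigr)q + (s_H p)\bigl((1-s_H)q\bigr)$ together with the criterion of Proposition~\ref{propidemp} that the paper's own proof relies on. No further comment is needed.
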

\begin{proof}
Let $p, q \in Q_m$. Then 
$$
(1- s_H)(p q) = q (1 - s_H)(p) +s_H(p)(1-s_H)(q),
$$
which is divisible by $\a_H^{n_H m_H}$.
\end{proof}

We note that the space $Q_m$ is a special case of quasi-invariants from \cite{BC}. More general spaces of quasi-invariants were considered in \cite{BC} which do not necessarily form rings. 

The space of quasi-invariants $Q_m$ is graded by degree. The same applies to the isotypic component  $Q_m^{\tau}$ of any irreducible $W$-module $\tau$.
 Hilbert series of the isotypic component $Q_m^{\tau}$ is  found in \cite{BC}.  It is defined via the value $c_{\tau^*}(m)$ of the central element 
 $$
 z =\sum_{H\in \A} m_H \sum_{i= 1}^{n_H-1} e_{H, i} \in \C W
 $$ 
 on the module $\tau^*$.  Let us state this precisely for the case $\tau = V^*$. 
Let the generator $s_H\in W_H$ have an  eigenvalue $\xi^{}\ne 1$ when acting on $V$. Then $\mbox{Tr } s_H|_{V} = \xi^{} + N -1$ and
$$
\mbox{Tr } z|_{V} =\sum_{H\in \A}  m_H \sum_{i=1}^{n_H-1} \sum_{k=1}^{n_H} \xi^{k i} (\xi^{k}+N-1) =\sum_{H\in \A}   m_H \sum_{i=1}^{n_H-1} \sum_{k=1}^{n_H}  \xi^{(i+1)k} = \sum_{H\in \A} m_H n_H.
$$
Therefore
\begin{equation}
\label{centrec}
c_V(m) = z|_{V}  = \frac{1}{N} \sum_{H\in \A} m_H n_H.
\end{equation}
Then the Hilbert series of the isotypic component $Q_m^{V^*}$ of representation $V^*$ in $Q_m$  has the form 
\begin{equation}
\label{hilb1}
P(t) = t^{c_V(m)} \chi_{\Phi(V)}(t),
\end{equation}
where $\chi_\tau(t)$ is the Hilbert series of the $W$-module $\tau^*$ in the polynomial ring  $SV^*$, $\chi_\tau (t) = P((SV^*\otimes \tau)^W, t)$, and $\Phi(V)=\mbox{kz}_{-m}(V)$ is the KZ-twist \cite{BC} (see also \cite{Op}, \cite{Ma}).

For further use, let us introduce other quasi-invariants. 
Berest and Chalykh also introduced quasi-invariants ${\bf Q}_m(\tau)$ with values in a $W$-module $\tau$ \cite{BC}. For the multiplicity function $m$ these are elements  $\varphi \in SV^* \otimes \tau$ such that
$$
(1\otimes e_{H, i})(\varphi) \equiv 0  \bmod  \langle \a_H\rangle^{m_H n_H} \otimes \tau,
$$
for all $H\in \A$  and $i=1, \ldots, n_H-1$. It is also shown in \cite{BC} that ${\bf Q}_m(\tau)\cong M_m(\Phi(\tau))$, where $M_m(\Phi(\tau))$ is the standard module over the corresponding rational Cherednik algebra, which is free over $SV^*$. In the case $\tau = V$ the space ${\bf Q}_m(\tau)$  as a module over $SV^*$  is generated by $N$ elements $\varphi^{(k)} = \sum_{i=1}^N f_i^{(k)} \otimes e_i$, $k=1, \ldots, N$, where $f_i^{(k)}$ are homogeneous polynomials of degree $c_V(m)$ given by formula \eqref{centrec} (see \cite{BC}).

\section{Isomorphisms and freeness results}
\label{IsomFree}

As in the previous section, let $W$ be an 
irreducible finite complex reflection group, $\A$ its reflection hyperplane arrangement, 
$D_m$ the logarithmic vector fields of $\A$, and $Q_m$ the quasi-invariant ring with $W$-invariant multiplicity $m$. 
We consider the derivation $L=\sum_{i=1}^N f_i \p_i \in \Der$. 
Then
$
w \sum_{i=1}^N f_i(x) \p_i = \sum_{i=1}^N f_i(w^{-1}(x)) w \p_i
$ 
for any $w\in W$. 

\begin{proposition}\label{prop1}
For $L=\sum_{i=1}^N f_i \p_i \in \Der^W$, we have $L \in D_m^W$ if and only if $f_i \in Q_{m}$ for all $1\le i \le N$.
\end{proposition}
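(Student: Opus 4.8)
The plan is to deduce both implications from a single identity, valid for every hyperplane $H\in\A$ and every index $i$ as soon as $L$ is $W$-invariant:
\[
(1-s_H)f_i = c_{H,i}\,L(\alpha_H), \qquad c_{H,i}\in\C .
\]
To get it I would first record two elementary facts. Since $s_H$ fixes $H$ pointwise, the linear form $(1-s_H)x_i$ vanishes on $H$, hence $(1-s_H)x_i = c_{H,i}\,\alpha_H$ for a scalar $c_{H,i}$; and $\alpha_H$ spans an $s_H$-stable line in $V^*$ (its kernel $H$ is $s_H$-stable), on which $s_H$ acts by a root of unity $\varepsilon_H$ of order $n_H$, so in particular $\varepsilon_H\ne 1$. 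Next, $W$-invariance of $L$ means precisely $L(s_H\varphi)=s_H(L\varphi)$ for all $\varphi\in SV^*$; combined with $f_i=L(x_i)$ this yields
\[
(1-s_H)f_i = L(x_i)-L(s_Hx_i)=L\bigl((1-s_H)x_i\bigr)=c_{H,i}\,L(\alpha_H).
\]

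For the ``only if'' direction, assume $L\in D_m^W$, so $\alpha_H^{m_Hn_H+1}\mid L(\alpha_H)$ for every $H$. Then the identity above gives $\alpha_H^{m_Hn_H}\mid (1-s_H)f_i$ for all $H$ and all $i$, and Proposition~\ref{propidemp} shows each $f_i\in Q_m$.

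For the ``if'' direction, assume $f_1,\dots,f_N\in Q_m$. Fixing $H$ and writing $\alpha_H=\sum_j a_jx_j$ with $a_j=\alpha_H(e_j)$, linearity gives $L(\alpha_H)=\sum_j a_jf_j$. By the remark following Proposition~\ref{propidemp}, $f_j\in Q_m$ implies $\alpha_H^{m_Hn_H+1}\mid (1-s_H)f_j$, hence $\alpha_H^{m_Hn_H+1}\mid (1-s_H)L(\alpha_H)$. On the other hand $W$-invariance of $L$ gives $s_H(L(\alpha_H))=L(s_H\alpha_H)=\varepsilon_H L(\alpha_H)$, so $(1-s_H)L(\alpha_H)=(1-\varepsilon_H)L(\alpha_H)$ with $1-\varepsilon_H\ne 0$; therefore $\alpha_H^{m_Hn_H+1}\mid L(\alpha_H)$. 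As $H$ was arbitrary this says $L\in D_m$, and $L\in\Der^W$ by hypothesis, so $L\in D_m^W$.

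The argument is essentially formal once these ingredients are in place, so I do not anticipate a genuine obstacle. The only points requiring care are the conventions for the $W$-action on $\Der$ versus on $SV^*$ (so that invariance of $L$ is correctly rendered as $L\circ s_H=s_H\circ L$ on $SV^*$), and the observation that membership in $D_m$ is tested only on the chosen defining forms $\alpha_H$ — which is exactly why $W$-invariance of $L$ is needed to transfer information from $L(\alpha_H)$ to the individual coefficients $f_i=L(x_i)$.
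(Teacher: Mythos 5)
Your proof is correct and follows essentially the same route as the paper: the central identity $(1-s_H)f_i = c_{H,i}\,L(\alpha_H)$, obtained from the $W$-invariance of $L$ and the fact that $(1-s_H)x_i$ is proportional to $\alpha_H$, is exactly the identity the paper derives (there written as $s_H^{-1}f_i-f_i=-c_iL(\alpha_H)$ with $c_i=x_i(\alpha_H^\vee)$). Your converse direction, which recovers $\alpha_H^{m_Hn_H+1}\mid L(\alpha_H)$ from $L(\alpha_H)=\sum_j a_jf_j$ together with the eigenvector relation $s_HL(\alpha_H)=\varepsilon_HL(\alpha_H)$, is only a minor (and clean) variant of the paper's step of inverting the same identity at an index with $c_{H,i}\neq 0$.
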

\begin{proof}
Let $s_H$ be a generator of $W_H$ for $H \in \mathcal A$. We take $\alpha_H^\vee \in V$ such that 
$\mathrm{Im}(s_H-\mathrm{id})=\langle \alpha_H^\vee \rangle$ and 
$s_H(v) = v -  \alpha_H(v)\alpha_H^\vee$ for $v \in V$.
For any $\varphi \in V^*$ we have  $(s_H^{-1}\varphi)(v)=\varphi(s_Hv)=\varphi(v)-\alpha_H(v)\varphi(\alpha_H^\vee)$, hence 
$s_H^{-1}(\varphi)=\varphi-\varphi(\alpha_H^\vee)\alpha_H$. 
By the $W$-invariance of $L$, we obtain
\[
s_H^{-1}(L(\varphi))-L(\varphi)
=L(s_H^{-1}\varphi)-L(\varphi)=
-\varphi(\alpha_H^\vee)L(\alpha_H).
\]
Put $\alpha_H^\vee=\sum_{i=1}^{N}c_je_j$ and $\varphi=x_i$, then $L(\varphi)=f_i$ and $s_H^{-1}f_i-f_i=-c_iL(\alpha_H)$ for all $1 \leq i \leq N$.
We have $\alpha_H^{m_Hn_H} | L(\alpha_H)$ if and only if $\alpha_H^{m_Hn_H} | (1-s_H)f_i$ for all $1 \leq i \leq N$. The claim follows.
\end{proof}

Note that a $W$-invariant  vector field $L$ is determined by any of its components. More exactly we have the following statement.
\begin{lemma}\label{onecompall}
Let $L=\sum_{i=1}^N f_i \p_i \in \Der^W$. 
If there exists $0 \neq \beta \in V^*$ such that $L(\beta)=0$, then $f_i=0$ for all $1 \leq i \leq N$.
\end{lemma}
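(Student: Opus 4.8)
The plan is to use the structure of a $W$-invariant derivation as a $W$-equivariant map $V^* \to SV^*$, combined with the fact that the reflections $s_H$ act on $V^*$ by transvections whose images span $V^*$. First I would recall from the proof of Proposition~\ref{prop1} the key identity: for any $\varphi \in V^*$, the $W$-invariance of $L$ yields $s_H^{-1}(L(\varphi)) - L(\varphi) = -\varphi(\alpha_H^\vee)L(\alpha_H)$. Now suppose $L(\beta)=0$ for some $\beta \neq 0$. Applying this identity with $\varphi = \beta$ gives $0 = -\beta(\alpha_H^\vee)L(\alpha_H)$ for every $H \in \A$. Since $SV^*$ is a domain, for each $H$ either $L(\alpha_H)=0$ or $\beta(\alpha_H^\vee)=0$.

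Next I would argue that $L(\alpha_H)=0$ for \emph{all} $H\in\A$. The point is that the hyperplanes $H^\vee = \{\varphi \in V^*\colon \varphi(\alpha_H^\vee)=0\}$ cannot all contain the fixed nonzero vector $\beta$ unless some degeneracy forces it; more precisely, one uses that the vectors $\alpha_H^\vee$, $H\in\A$, span $V$ (this holds because $W$ acts irreducibly on $V$, so there is no nonzero $W$-fixed subspace, while the span of the $\alpha_H^\vee$ is $W$-stable). Hence $\beta(\alpha_H^\vee)=0$ for all $H$ would force $\beta=0$, a contradiction; so there is at least one hyperplane $H_0$ with $L(\alpha_{H_0})=0$. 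To get \emph{all} $H$, I would run a connectivity/propagation argument: if $L(\alpha_{H_0})=0$, apply the identity again with $\varphi = \alpha_{H_0}$ to learn that $L(\alpha_H)=0$ whenever $\alpha_{H_0}(\alpha_H^\vee)\neq 0$, and iterate. Because $W$ is irreducible, one should check that the graph on $\A$ with edges $\{H,H'\}$ when $\alpha_H(\alpha_{H'}^\vee)\neq 0$ is connected — this follows since an invariant partition of $\A$ into mutually orthogonal-type blocks would contradict irreducibility of $W$. So $L(\alpha_H)=0$ for every $H\in\A$, i.e.\ $L$ annihilates every $\alpha_H$.

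Finally, since the linear forms $\alpha_H$, $H\in\A$, span $V^*$ (again by irreducibility of $W$: their span is $W$-stable and nonzero), and $L$ is linear, $L(\varphi)=0$ for all $\varphi\in V^*$; as $L$ is a derivation this forces $L=0$ on all of $SV^*$, hence $f_i = L(x_i) = 0$ for all $i$.

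I expect the main obstacle to be the propagation step: rigorously establishing that ``$L$ kills one $\alpha_H$'' forces ``$L$ kills every $\alpha_H$.'' One wants a clean irreducibility argument rather than a case-by-case check; the natural formulation is that the subspace $\{\varphi\in V^*\colon L(\varphi)=0\}$ is $W$-stable (because $L$ is $W$-invariant), hence either $0$ or all of $V^*$ by irreducibility — and it is nonzero once we know it contains some $\alpha_{H_0}$. This observation actually shortcuts the whole connectivity discussion: the set $\{\varphi : L\varphi = 0\}$ is a $W$-submodule of $V^*$ containing a nonzero element, so it equals $V^*$, and we are done. The only real content is therefore the first paragraph's deduction that this submodule is nonzero, for which one needs $\beta(\alpha_H^\vee)\neq 0$ for at least one $H$ — guaranteed since the $\alpha_H^\vee$ span $V$.
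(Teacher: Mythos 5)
Your proposal is correct, and its final, streamlined form is a genuinely different (and cleaner) argument than the paper's. The paper applies the identity $s_H^{-1}(L(\beta))-L(\beta)=-\beta(\alpha_H^\vee)L(\alpha_H)$ to conclude that $L(\alpha_H)=0$ whenever $\beta(\alpha_H^\vee)\neq 0$, and then finishes by asserting that the span of $\{\alpha_H^\vee\colon\beta(\alpha_H^\vee)\neq 0\}$ is all of $V$; you instead observe that $\{\varphi\in V^*\colon L(\varphi)=0\}$ is a $W$-stable subspace of $V^*$ (because $L\in\Der^W$), contains $\beta\neq 0$, and hence equals $V^*$ by irreducibility of the reflection representation, so $f_i=L(x_i)=0$. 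This buys you a two-line proof that bypasses both the reflection computation and the spanning claim (which the paper states without justification). Two caveats about your write-up: the propagation/connectivity argument of your second paragraph is under-justified as written (the connectedness of the graph on $\A$ would need an actual proof), but it is entirely superseded by your shortcut; and your closing sentence is confused --- you do not need to exhibit some $\alpha_{H_0}$ with $\beta(\alpha_{H_0}^\vee)\neq 0$ to see that the submodule is nonzero, since $\beta$ itself is a nonzero element of it by hypothesis. With those trimmed away, what remains is a complete and correct proof.
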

\begin{proof}
By the $W$-invariance of $L$ and the assumption $L(\beta)=0$, we have 
\[
0=s_H^{-1}(L(\beta))=L(s_H^{-1}\beta)=L(\beta-\beta(\alpha_H^\vee)\alpha_H)=
-\beta(\alpha_H^\vee)L(\alpha_H).
\]
Then $L(\alpha_H)=0$ if $\beta(\alpha_H^\vee) \neq 0$. 
Note that the $\C$-span $\langle \alpha_H^\vee \ | \ \beta(\alpha_H^\vee)\neq 0 \rangle$ coincides with $V$. Hence $L=0$ on $V^*$. 
\end{proof}

\begin{proposition}\label{prop2}
Let $0 \neq L=\sum_{i=1}^N f_i \p_i  \in D_m^W$. Consider the linear span 
$$
V_f=\langle f_i\colon  1\le i \le N \rangle \subseteq SV^*.
$$
 Then $V_f \cong V^*$ as $W$-modules. The isomorphism  $\psi\colon V^* \to V_f$ is given by $\psi (\xi) = L(\xi)$ for $\xi \in V^*$. 
 \end{proposition}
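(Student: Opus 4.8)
The plan is to take the linear map $\psi\colon V^*\to SV^*$, $\psi(\xi)=L(\xi)$, and verify three things: its image is exactly $V_f$, it is $W$-equivariant, and it is injective. These together will force $V_f$ to be a $W$-submodule of $SV^*$ isomorphic to $V^*$ via $\psi$. I do not expect a genuine obstacle here; the whole weight of the argument sits in Lemma \ref{onecompall}, and everything else is formal.

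For the image: since $L=\sum_{i=1}^N f_i\p_i$ we have $\psi(x_i)=L(x_i)=f_i$, and as $x_1,\ldots,x_N$ is a basis of $V^*$ and the $f_i$ span $V_f$ by definition, $\psi(V^*)=\langle f_1,\ldots,f_N\rangle=V_f$. So $\psi$ is a surjection onto $V_f$. For $W$-equivariance: because $L\in\Der^W$ we have $L(w\varphi)=w(L\varphi)$ for all $w\in W$ and $\varphi\in SV^*$, so in particular $\psi(w\xi)=L(w\xi)=w(L\xi)=w\,\psi(\xi)$ for $\xi\in V^*$. It follows that $V_f=\psi(V^*)$ is stable under $W$, since $w\,V_f=\psi(wV^*)=\psi(V^*)=V_f$; thus $V_f$ is a genuine $W$-submodule of $SV^*$ and $\psi\colon V^*\to V_f$ is a map of $W$-modules.

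For injectivity: suppose $\psi(\beta)=L(\beta)=0$ for some $\beta\in V^*$. If $\beta\neq 0$, Lemma \ref{onecompall} gives $f_i=0$ for all $i$, i.e. $L=0$, contradicting the hypothesis $0\neq L$. Hence $\ker\psi=0$. Combining the three points, $\psi\colon V^*\to V_f$ is a bijective morphism of $W$-modules, hence a $W$-module isomorphism, and $V_f\cong V^*$. (As a byproduct one gets $\dim V_f=N$, so the components $f_1,\ldots,f_N$ are in fact linearly independent, though this is not needed for the statement.)
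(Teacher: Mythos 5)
Your proof is correct and follows essentially the same route as the paper: surjectivity from $\psi(x_i)=f_i$, injectivity from Lemma \ref{onecompall} together with $L\neq 0$, and equivariance from $L\in\Der^W$. You have merely spelled out the steps (including the $W$-stability of $V_f$) in more detail than the paper does.
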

 \begin{proof}
 Note that 
 $\psi(\xi)=\sum_{i=1}^{N}f_i(\partial_i(\xi)) \in V_f$. 
Since $L(x_i)=f_i$, $\psi$ is surjective. By  Lemma 3.2, $\psi$ is injective. 
 By the $W$-invariance of $L$, 
 the morphism $\psi$ is $W$-invariant. 
 \end{proof}


Recall that $Q_m$ is a $W$-module as well as a free module over $(SV^*)^W$ \cite{BC}. Let $Q_m^{V^*}$ be the isotypic component of the $W$-module $V^*$ in $Q_m$. Then
$$
Q_m^{V^*} \cong \text{Hom}_W(V^*, Q_m)\otimes V^*
$$
as $(S V^*)^W\otimes \C W$-modules.
The following theorem follows from Propositions \ref{prop1}, \ref{prop2}.

\begin{theorem}\label{mth1}
There is an isomorphism 
$$
\Theta\colon \text{\rm Hom}_W(V^*, Q_m) \cong D_m^W
$$
of $W$-modules. Under this isomorphism
$$
\Theta(\varphi) = \sum_{i=1}^N \varphi(x_i) \partial_i, 
$$
where $\varphi \in \text{\rm Hom}_W(V^*, Q_m)$. For the inverse map we have $\Theta^{-1}(\sum_{i=1}^N f_i \partial_i)(x_k)=f_k$, $1\le k \le N$.
\end{theorem}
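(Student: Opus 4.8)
The plan is to verify that the two maps described in the statement are mutually inverse isomorphisms of $W$-modules, and the work has essentially been done in the preceding propositions. First I would check that $\Theta$ is well-defined: given $\varphi \in \mathrm{Hom}_W(V^*, Q_m)$, the derivation $\Theta(\varphi) = \sum_{i=1}^N \varphi(x_i)\partial_i$ has all coefficients $\varphi(x_i) \in Q_m$, so by Proposition \ref{prop1} it lies in $D_m$ provided it is $W$-invariant; and $W$-invariance of $\Theta(\varphi)$ follows from the $W$-equivariance of $\varphi$ together with the formula $w\sum_i f_i(x)\partial_i = \sum_i f_i(w^{-1}x)w\partial_i$ recorded just before Proposition \ref{prop1}. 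Indeed, for $\xi \in V^*$ one computes $\Theta(\varphi)(w\xi) = \varphi(w\xi) = w\,\varphi(\xi) = w\,(\Theta(\varphi)(\xi))$, which is exactly the condition $L(w\xi) = w(L\xi)$ defining $\Der^W$.

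Next I would show $\Theta$ is injective. If $\Theta(\varphi) = 0$ then $\varphi(x_i) = \Theta(\varphi)(x_i) = 0$ for every basis vector $x_i$, hence $\varphi = 0$ since $x_1,\dots,x_N$ span $V^*$. For surjectivity, take any $L = \sum_{i=1}^N f_i\partial_i \in D_m^W$; by Proposition \ref{prop1} each $f_i \in Q_m$, and by Proposition \ref{prop2} the assignment $\psi\colon \xi \mapsto L(\xi)$ is a $W$-equivariant map $V^* \to V_f \subseteq Q_m$ (the case $L = 0$ is trivial). Thus $\psi \in \mathrm{Hom}_W(V^*, Q_m)$ and $\Theta(\psi)(x_i) = \psi(x_i) = L(x_i) = f_i$, so $\Theta(\psi) = L$. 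This simultaneously establishes that the proposed inverse formula $\Theta^{-1}(\sum_i f_i\partial_i)(x_k) = f_k$ is correct: the map $L \mapsto (\xi \mapsto L(\xi))$ is a two-sided inverse to $\Theta$, since $\Theta^{-1}(\Theta(\varphi))(x_k) = \varphi(x_k)$ for all $k$ recovers $\varphi$.

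Finally I would note that $\Theta$ is a morphism of $W$-modules: the $W$-action on $\mathrm{Hom}_W(V^*,Q_m)$ (regarding it as a subspace of $\mathrm{Hom}(V^*,Q_m)$ with the conjugation action, or equivalently the residual action coming from $Q_m^{V^*} \cong \mathrm{Hom}_W(V^*,Q_m)\otimes V^*$) corresponds under $\Theta$ to the $W$-action on $D_m^W$ inherited from the action $(wL)(\varphi) = w(L(w^{-1}\varphi))$ on $\Der$; this is a direct check using the explicit formulas. I do not anticipate a serious obstacle here, as all the essential content is in Propositions \ref{prop1}, \ref{prop2} and Lemma \ref{onecompall}; the only point requiring a little care is bookkeeping the various $W$-actions and confirming that the identification is $W$-equivariant rather than merely an isomorphism of vector spaces, together with the degenerate case $L=0$ in the application of Proposition \ref{prop2}.
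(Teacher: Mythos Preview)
Your proposal is correct and follows exactly the approach the paper takes: the paper simply states that the theorem follows from Propositions~\ref{prop1} and~\ref{prop2}, and you have spelled out the details of how those propositions combine to give well-definedness, injectivity, and surjectivity of $\Theta$. Your additional care with the degenerate case $L=0$ and the $W$-equivariance bookkeeping is appropriate, though note that since both $\mathrm{Hom}_W(V^*,Q_m)$ and $D_m^W$ carry the trivial $W$-action, the ``$W$-module'' part of the isomorphism is automatic once the bijection is established.
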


In the case of symmetric group the basis of $Q_{m}^{V^*}$ is found in \cite{BM} which generalises construction for the basis elements of the lowest degree from \cite{FV}. These results and Theorem \ref{mth1} have the following implication.
\begin{corollary}
Let $W=S_{N+1}$, which acts in its reflection representation in the space $V=\{(x_1, \ldots, x_{N+1}): \sum x_i =0$\}; $\A=\{V\cap \{x_i=x_j\}\colon 1\le i < j \le N+1\}$.  Let vector fields
$$
L^{(k)}=\sum_{i=1}^{N+1} f_i^ {(k)}\p_i 
$$
be given by
$$
f_i^{(k)} = \sum_{j=1}^{N+1} \int_{x_i}^{x_j} t^k \prod_{s=1}^{N+1} (t-x_s)^m dt,
$$
where $0\le k \le N-1$. Then vector fields $L^{(k)}$ form a basis of $D_m^W$. 
\end{corollary}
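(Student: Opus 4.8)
The plan is to combine Theorem~\ref{mth1} with the explicit description of the $V^*$-isotypic component of $Q_m$ for the symmetric group found in \cite{BM}. By Theorem~\ref{mth1} it suffices to exhibit, for each $k$ with $0\le k\le N-1$, a $W$-morphism $\varphi^{(k)}\colon V^*\to Q_m$ such that $\Theta(\varphi^{(k)})=L^{(k)}$, i.e.\ $\varphi^{(k)}(x_i)=f_i^{(k)}$, and then to check that the $\varphi^{(k)}$ form a basis of $\mathrm{Hom}_W(V^*,Q_m)$ as a module over $(SV^*)^W$; equivalently, that the polynomials $f_i^{(k)}$ are $m$-quasi-invariant, that the assignment $x_i\mapsto f_i^{(k)}$ is $W$-equivariant, and that the resulting $N$ vector fields are a free basis of $D_m^W$.

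First I would verify the $W$-equivariance. The symmetric group $S_{N+1}$ permutes the coordinates $x_1,\dots,x_{N+1}$; a permutation $\sigma$ sends $f_i^{(k)}$ to $\sum_{j}\int_{x_{\sigma^{-1}(i)}}^{x_{\sigma^{-1}(j)}} t^k\prod_s(t-x_{\sigma^{-1}(s)})^m\,dt$, and since $\prod_s(t-x_{\sigma^{-1}(s)})=\prod_s(t-x_s)$ the product is symmetric, so this equals $f_{\sigma^{-1}(i)}^{(k)}$. Hence $x_i\mapsto f_i^{(k)}$ defines a map of $W$-modules $V^*\to SV^*$ — one should note that the $f_i^{(k)}$ do lie in $SV^*$, i.e.\ they restrict to polynomials on $V=\{\sum x_i=0\}$, which is automatic since the integrals are genuine polynomials in $x_1,\dots,x_{N+1}$.

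Second, I would show $f_i^{(k)}\in Q_m$, i.e.\ that $(1-s_H)f_i^{(k)}$ is divisible by $\alpha_H^{2m}$ for every hyperplane $H=\{x_a=x_b\}$ (here $n_H=2$, so $n_Hm_H=2m$), using Proposition~\ref{propidemp}. Fix the transposition $s_H=(a\,b)$. Only the terms with $j\in\{a,b\}$ and the index $i\in\{a,b\}$ interact nontrivially with $s_H$; the key computation is that
\[
(1-s_H)\!\int_{x_a}^{x_b} t^k\prod_{s}(t-x_s)^m\,dt
= 2\int_{x_a}^{x_b} t^k\prod_s(t-x_s)^m\,dt,
\]
and that $\prod_s(t-x_s)^m$ contains the factor $(t-x_a)^m(t-x_b)^m$, so integrating a function vanishing to order $m$ at both endpoints over $[x_a,x_b]$ produces something divisible by $(x_a-x_b)^{2m+1}$ — this is the standard "antiderivative vanishes to one higher order" estimate, applied at both limits. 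For $i\notin\{a,b\}$ one checks the relevant difference of integrals telescopes and is likewise divisible by $(x_a-x_b)^{2m}$. This divisibility step is where the real content sits, and I expect it to be the main obstacle: one must handle the $i\in\{a,b\}$ and $i\notin\{a,b\}$ cases carefully and make the order-of-vanishing bookkeeping precise, perhaps by differentiating in $t$ or by a change of variables centred at the hyperplane.

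Finally, freeness. By Theorem~\ref{mth1}, $D_m^W\cong\mathrm{Hom}_W(V^*,Q_m)$, and the latter is a free $(SV^*)^W$-module whose rank is $\dim\mathrm{Hom}_W(V^*,V^*)=1$ times the multiplicity, which by the Hilbert series \eqref{hilb1} and \cite{BC} equals $N$; moreover the lowest degree of a generator is $c_V(m)=\frac1N\sum_H m_H n_H = \binom{N+1}{2}\cdot\frac{2m}{N}=(N+1)m$, and indeed $\deg f_i^{(k)}=k+(N+1)m$ for $0\le k\le N-1$ gives $N$ generators in the degrees predicted by the KZ-twist formula (these are the degrees computed in \cite{HMRS}). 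To conclude it remains to show the $L^{(k)}$ are $(SV^*)^W$-linearly independent — equivalently that the $N\times N$ "coefficient matrix" obtained by expanding them against a putative basis is nonsingular — for which one can use a determinant/leading-term argument: order the $L^{(k)}$ by degree and show the top-degree parts are linearly independent, or invoke Saito's criterion by computing that the determinant $\det\!\big(f_i^{(k)}\big)$ equals, up to a nonzero scalar, $\prod_{a<b}(x_a-x_b)^{2m+1}$, which is exactly $Q(\A,mn+1)=\prod_{H}\alpha_H^{r_1(H)}$. Since the $f_i^{(k)}\in Q_m=Q_{m,\A}$ already give $L^{(k)}\in D_m^W\subseteq D_m$, Saito's criterion then forces $\{L^{(k)}\}$ to be a free basis of $D_m$, and being $W$-invariant they form a basis of $D_m^W\cong\mathrm{Hom}_W(V^*,Q_m)$, completing the proof. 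The determinant identity is again a concrete computation — essentially the one already implicit in \cite{FV}, \cite{BM} for the lowest-degree generators, extended across all $k$ — and after the quasi-invariance step it is the second place demanding genuine work.
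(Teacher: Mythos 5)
Your first paragraph is exactly the paper's proof: the paper disposes of this corollary in one line by citing \cite{BM} for the fact that the $f_i^{(k)}$ give a basis of the isotypic component $Q_m^{V^*}$ over $(SV^*)^W$ and then applying Theorem \ref{mth1}. So at that level your approach matches the paper's.

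The rest of your proposal attempts to replace the citation with a direct verification, and there the situation is mixed. The equivariance check and the quasi-invariance check are correct (for $i\in\{a,b\}$ \emph{every} $j$ contributes a copy of $\int_{x_a}^{x_b}$, not only $j\in\{a,b\}$, and for $i\notin\{a,b\}$ the difference is identically zero, but your conclusion $\alpha_H^{2m+1}\mid(1-s_H)f_i^{(k)}$ is right; also $\deg f_i^{(k)}=k+(N+1)m+1$, not $k+(N+1)m$, which still matches the exponents $d_k=c_V(m)+b_k$ with $b_k=1,\dots,N$). The genuine gap is the final independence step: knowing that the $N$ vector fields are homogeneous, lie in $D_m$, and have degrees summing to $\sum_H(m_Hn_H+1)$ does not by itself give $SV^*$-independence, and matching the exponents of the free module $D_m$ degree-by-degree does not force a set of homogeneous elements of those degrees to be a basis. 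You must actually prove $\det\bigl(f_i^{(k)}\bigr)\neq 0$ (in coordinates on $V$ — note the matrix is $(N+1)\times N$ in ambient coordinates, so one must first use $\sum_i f_i^{(k)}=0$ to restrict), e.g.\ by an evaluation or leading-term computation, or by an argument in the style of Lemma \ref{lem2}; you flag this as ``demanding genuine work'' but do not supply it. As written, the proof is complete only if you fall back on \cite{BM} for the basis property, which is precisely what the paper does.
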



Theorem \ref{mth1} provides $N$ vector fields 
$$
L^{(k)} = \sum_{i=1}^N  q_i^{(k)} (x) \p_{x_i}
$$
that are $W$-invariant, and 
$q_i^{(k)} \in Q_m^{V^*}$ for all $i,k = 1,\ldots, N$, so that $L^{(k)}$ form a basis of $D_m^W$ over $(SV^*)^W$. Let us consider the $N\times N$ matrix $\mathcal Q$ whose $(ij)$-th entry is $q_i^{(j)}$.
Further, one can assume that polynomials $q_i^{(k)}$ are homogeneous and let $d_k = \deg q_i^{(k)}$. It follows from \eqref{centrec}, \eqref{hilb1} that 
\begin{equation}
\label{degrees}
d_k = \frac{1}{N} \sum_{H\in \A} n_H m_H +b_k,
\end{equation}
where $b_k$ are degrees of appearance of the module $\Phi(V)^*$  in the space of 
coinvariants
$S V^*/((SV^*)^W_+)$.  


We are going to show that vector fields $L^{(k)}$ form a free basis for multiarrangement $\A$. We adapt approach for $m=0$ from \cite{T80} (see also \cite{OS}). We will need to know degree of the determinant of $\mathcal Q$ so we firstly find the sum $\sum b_k$ in the next two propositions.

\begin{proposition}\label{bb}
Let $\tau$ be an irreducible representation of $W$ such that $\tau = {\rm  kz }_m(V)$ for some integer multiplicity function $m$. Let $\mathcal C$ be a $W$-orbit on the set of reflection hyperplanes $\mathcal A$. Let $1_{\mathcal C}$ be the characteristic function for $\mathcal C$, that is $1_{\mathcal C} (H)=1$ if $H \in \mathcal C$ and $1_{\mathcal C} (H)=0$ if $H \in \mathcal A\setminus \mathcal C$. Let $\tau' = {\rm kz}_{1_{\mathcal C}} \tau$. Let $b_k = b_k(\tau)$ be degrees of appearance of the dual representation $\tau^*$ in coinvariants $S V^*/((SV^*)^W_+)$, $k=1, \ldots, N$. Let $b_k'=b_k(\tau')$. Then
\beq{propr}
\sum_{k=1}^N b_k =\sum _{k=1}^N b_k'.
\eeq
\end{proposition}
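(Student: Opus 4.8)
The plan is to compute the sum $\sum_{k=1}^N b_k(\tau)$ in closed form in terms of the eigenvalues of the relevant central element, so that the claimed equality becomes a statement about how the KZ twist by $1_{\mathcal C}$ changes that central element. Recall that the degrees of appearance of a representation in the coinvariant algebra are governed by the fake degree (graded multiplicity) polynomial: if $\chi_\tau(t) = P((SV^*\otimes \tau)^W,t) = \sum_k t^{b_k(\tau)} \cdot(\text{something})$, then by the classical theory the fake degree polynomial of $SV^*$ and that of the coinvariant algebra differ only by the Poincaré series $\prod_{i=1}^N (1-t^{d_i})^{-1}$ of the invariants, so $\sum_k b_k(\tau)$ can be extracted from the fake degree polynomial $f_\tau(t)$ for the coinvariant algebra. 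The key classical fact I would invoke is that the exponents (degrees of appearance) of $\tau^*$ in the coinvariant algebra satisfy
\[
\sum_{k=1}^N b_k(\tau) = \frac{1}{\dim\tau}\,\mathrm{Tr}\bigl(z_{\mathrm{eu}}\,|\,\tau\bigr) - (\text{a term depending only on } W),
\]
where $z_{\mathrm{eu}}$ is a suitable "Euler-type" central element of $\C W$ built from $\sum_{H}\sum_i e_{H,i}$ with coefficients that are the fundamental degrees/codegrees data. Concretely, the sum of the degrees of appearance of any irreducible $\tau^*$ in the coinvariants is an affine-linear function of the eigenvalue $c_\tau$ of the central element $z' = \sum_{H\in\A}\sum_{i=1}^{n_H-1} e_{H,i}$ on $\tau$, with slope and intercept independent of $\tau$. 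This is exactly the structure already visible in \eqref{hilb1}–\eqref{degrees}: there, for $\tau = \Phi(V)^* = \mathrm{kz}_{-m}(V)^*$, the quantity $\sum_k d_k$ equals $\sum_{H} n_H m_H + \sum_k b_k$, and $\sum_H n_H m_H = N c_V(m)$ is precisely $N$ times the eigenvalue of $z$ on $V$ — i.e. the degrees track the central character linearly.

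With that in hand, the proof reduces to two ingredients. First, the linearity statement: there exist constants $a, c$ (depending only on $W$, not on $\tau$) such that $\sum_{k=1}^N b_k(\tau) = a\cdot c_{z'}(\tau) + c$, where $c_{z'}(\tau)$ is the eigenvalue on $\tau$ of the full central element $z' = \sum_{H}\sum_{i} e_{H,i}$ associated to the constant multiplicity $1$. I would prove this by writing the fake-degree polynomial $\chi_\tau(t)$, using the Berest–Chalykh description recalled in the excerpt, $\chi_\tau(t) = t^{-c_{\tau^*}}\,P(\mathbf{Q}_{\bullet}(\tau),t)$-type identity, or more directly via Molien's formula: $\sum_k b_k(\tau)$ is $\left.\frac{d}{dt}\right|_{t=1}$ of the coinvariant fake degree, and a short computation with Molien's series expresses this derivative as $\frac{1}{\dim\tau}\mathrm{Tr}(z'|\tau)$ up to a $\tau$-independent correction coming from differentiating $\prod(1-t^{d_i})$. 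Second, the invariance of the relevant eigenvalue combination under the twist: the KZ twist $\mathrm{kz}_{1_{\mathcal C}}$ changes the central character in a controlled way, and I must check that $c_{z'}(\tau') = c_{z'}(\tau)$, i.e. that twisting by the single orbit $\mathcal C$ does not change the eigenvalue of the \emph{constant-$1$} central element $z'$. Since $\tau' = \mathrm{kz}_{1_\mathcal{C}}\tau$ and $\tau = \mathrm{kz}_m(V)$, we have $\tau' = \mathrm{kz}_{m + 1_{\mathcal C}}(V)$, so the question is whether $c_V(m+1_{\mathcal C})$-type data enters only through $\sum_H n_H m_H$, which is orbit-symmetric. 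Here the crucial point is that $\mathrm{kz}$ is a permutation of the irreducibles and the relevant trace $\mathrm{Tr}(z'|\,\mathrm{kz}_m(V))$ equals $\mathrm{Tr}(z'|\,V)$ plus a contribution that is symmetric in the orbit structure — I expect $\mathrm{Tr}(z'\,|\,\mathrm{kz}_m\tau) = \mathrm{Tr}(z'\,|\,\tau)$ outright because $z'$ is the central element with constant coefficients and KZ twists intertwine monodromy with central characters of the \emph{fixed} arrangement data, so the sum over all hyperplanes is twist-invariant even though orbit-by-orbit pieces shuffle.

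The main obstacle, and the step I would spend the most care on, is precisely this last claim: pinning down how $\mathrm{kz}_{1_{\mathcal C}}$ acts on the eigenvalue of $z' = \sum_H\sum_i e_{H,i}$, and showing the total (summed over all hyperplanes, not orbit-by-orbit) is unchanged. One clean way to argue it: the KZ twist $\mathrm{kz}_{\mu}$ is, by construction via the monodromy of the KZ connection with parameter $\mu$, characterized by the property that it sends the lowest $c$-degree of $\mathbf{Q}_{\mu}(\tau)$ to the central character, and iterating twists composes additively on parameters; then $\sum_k b_k(\tau)$ for $\tau = \mathrm{kz}_m(V)$ is, by \eqref{degrees} applied with multiplicity $m$, equal to $\sum_k d_k(m) - \sum_H n_H m_H$; applying the same with multiplicity $m + 1_{\mathcal C}$ gives $\sum_k b_k(\tau') = \sum_k d_k(m+1_{\mathcal C}) - \sum_H n_H(m_H + 1_{\mathcal C}(H))$; and the difference $\sum_k d_k(m+1_{\mathcal C}) - \sum_k d_k(m)$ must equal $\sum_H n_H 1_{\mathcal C}(H) = |\mathcal C|\cdot n_{\mathcal C}$ by comparing Hilbert series of $Q_m$ versus $Q_{m+1_{\mathcal C}}$, since passing from $m$ to $m+1_{\mathcal C}$ multiplies the relevant graded dimension exactly by $t^{|\mathcal C| n_{\mathcal C}}$ (one extra factor of $\alpha_H^{n_H}$ worth of degree per hyperplane in $\mathcal C$). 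Subtracting the two displayed expressions, the $d_k$-differences and the $n_H$-sum differences cancel, leaving $\sum_k b_k(\tau) = \sum_k b_k(\tau')$, which is \eqref{propr}. I would present the argument in this order: (i) recall the Hilbert-series/fake-degree dictionary; (ii) establish $\sum_k d_k(m) = \sum_H n_H m_H + \sum_k b_k(\mathrm{kz}_{-m}V)$ from \eqref{hilb1}–\eqref{degrees}; (iii) compute $\sum_k d_k(m + 1_{\mathcal C}) - \sum_k d_k(m) = |\mathcal C| n_{\mathcal C}$ directly from the defining divisibility conditions on $Q_m$; (iv) combine.
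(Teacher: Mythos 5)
Your argument has two genuine gaps, one in each of the two routes you sketch. The first route rests on the claim that $\sum_k b_k(\tau)$ is an affine-linear function of the eigenvalue of the constant-coefficient central element $z'=\sum_{H}\sum_{i=1}^{n_H-1}e_{H,i}$ on $\tau$. This is false for complex reflection groups with $n_H>2$: since $e_{H,i}$ acts on $\det_{W_H}^{-j}$ by $n_H\delta_{ij}$, the trace of $z'$ on $\tau$ only sees $\sum_H n_H(\dim\tau - r_{H,0})$, where $r_{H,j}$ are the multiplicities in $\mathrm{Res}^W_{W_H}\tau^*=\oplus_j r_{H,j}\det_{W_H}^{-j}$, whereas Opdam's formula (used in the paper, equation \eqref{sum1}) gives $\sum_k b_k(\tau)=\sum_H\sum_j j\,r_{H,j}$, which depends on the full distribution of the $r_{H,j}$ and not just on $r_{H,0}$. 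Two representations can share the central character of $z'$ but have different $\sum_k b_k$. (The claim does hold when all $n_H=2$, i.e.\ in the Coxeter case, which may be the source of the intuition.)

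The second route, in your final paragraph, is circular. Granting \eqref{hilb1}--\eqref{degrees}, the identity $\sum_k d_k(m+1_{\mathcal C})-\sum_k d_k(m)=|\mathcal C|\,n_{\mathcal C}$ is \emph{equivalent} to the proposition, so it cannot be assumed. Your justification --- that passing from $m$ to $m+1_{\mathcal C}$ multiplies the graded dimension by $t^{|\mathcal C|n_{\mathcal C}}$ --- is incorrect: multiplication by the invariant $\prod_{H\in\mathcal C}\alpha_H^{n_H}$ embeds $Q_m^{V^*}$ into $Q_{m+1_{\mathcal C}}^{V^*}$ but the inclusion is proper in general (indeed the individual degrees $b_k$ do change under the twist; only their sum is preserved, which is the whole content of the statement). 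The other natural source for the degree-sum identity, namely Saito's criterion applied to a free basis of $D_{m+1_{\mathcal C}}$, is unavailable because the paper derives the freeness of $D_m$ \emph{from} this proposition. The paper's actual proof avoids both traps: it tensors $\tau$ with the linear character $\epsilon$ spanned by $\delta=\prod_{H\in\mathcal C}\alpha_H^{-(n_H-1)}$, invokes the Berest--Chalykh/Opdam identity $t^{\deg\delta}\chi_{\epsilon\otimes\tau}(t)=t^{c_{\tau'}(-1_{\mathcal C})}\chi_{\tau'}(t)$ relating this to the KZ twist, and then computes the change in $\sum_H\sum_j j\,r_{H,j}$ under tensoring with $\epsilon$ explicitly, using the fact that the $r_{H,j}$ are KZ-invariant to pin down $r_{H,0}=N-1$. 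You would need these finer inputs (or something equivalent) to close the argument.
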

\begin{proof}
Let $\delta = \prod_{H\in \mathcal C} \alpha_H^{-(n_H-1)}$. Let ${\mathcal R}_{\mathcal C}$ be the set of reflections each of which fixes a hyperplane from $\mathcal C$. Let  $|{\mathcal R}_{\mathcal C}|$ be the cardinality of the set  ${\mathcal R}_{\mathcal C}$, and let $|{\mathcal C}|$ be the size of $\mathcal C$. Then 
$$
\deg \delta = -  \sum_{H\in \mathcal C} (n_H-1) = - |{\mathcal R}_{\mathcal C}|.
$$
It is established in \cite[Section 8.3]{BC} (see also \cite{Op}) that
\beq{fake1}
t^{\deg \delta} \chi_{\epsilon \otimes \tau} (t) = t^{c_{\tau'}(-1_{\mathcal C})} \chi_{\tau'}(t),
\eeq
where by e.g. \cite[Corollary 7.18]{BC}
$$
c_{\tau'}(-1_{\mathcal C}) = c_V(-1_{\mathcal C}) = -\frac{1}{N} \sum_{H\in \mathcal C} n_H = - \frac{|{\mathcal R}_{\mathcal C}| + |{\mathcal C}|}{N},
$$
 and $\epsilon $ is the one-dimensional representation of $W$ on the linear space spanned by $\delta$.
Note that character of $\epsilon$ is $\det_{\mathcal C}^{-1}$, where $\det_{\mathcal C}$ is the determinant character for the cyclic subgroup $W_H\subseteq W$ which contains elements of $W$ fixing a chosen hyperplane $H\in  \mathcal C$.

The relation (\ref{fake1}) has the form
\beq{fake2}
t^{-|{\mathcal R}_{\mathcal C}|+\frac{1}{N}(|{\mathcal R}_{\mathcal C}| + |{\mathcal C}|)}   \chi_{\epsilon \otimes \tau} (t) =  \chi_{\tau'}(t),
\eeq
which implies that
\beq{interm0}
(1-N)|{\mathcal R}_{\mathcal C}|+|{\mathcal C}| + \sum_{k=1}^N b_k(\epsilon \otimes \tau) = \sum_{k=1}^N b_k'.
\eeq
Consider  decomposition of the restricted module $\tau^*$  into irreducible ones 
\begin{equation}
    \label{decomp1}
\text{Res}^W_{W_H} \tau^* = \oplus_{j=0}^{n_H-1} r_{H, j} {\det}_{W_H}^{-j},
\end{equation}
where ${\det}_{W_H}$ is the determinant representation for the subgroup $W_H$ and $r_{H, j}\in \N$.
It is shown in \cite[Lemma 2.1]{Op} that 
\begin{equation}
    \label{sum1}
\sum_{k=1}^N b_k (\tau) = \sum_{H\in \mathcal A} \sum_{j=1}^{n_H-1} j r_{H, j}.
\end{equation}
Let us  also consider decomposition of the restricted module $\epsilon^*\otimes\tau^*$ into irreducible ones: 
$$
\text{Res}^W_{W_H} \epsilon^*\otimes\tau^* = \oplus_{j=0}^{n_H-1} \tilde r_{H, j} {\det}_{W_H}^{-j},
$$
where integers $\tilde r_{H, j}$ satisfy 
$$
\tilde r_{H, j}= \begin{cases}
r_{H,j}, \quad H \notin {\mathcal C}, \\
r_{H,j+1}, \quad H \in {\mathcal C}, j\ne n_H-1, \\
r_{H,0},  \quad H \in {\mathcal C}, j = n_H-1.
\end{cases}
$$
It follows that 
$$
\sum_{k=1}^N b_k (\epsilon\otimes \tau) = \sum_{H\in \mathcal A} \sum_{j=1}^{n_H-1} j \tilde r_{H, j} = 
 \sum_{H\in \mathcal A} \sum_{j=1}^{n_H-1} j  r_{H, j}  - \sum_{H \in \mathcal C}\left( \sum_{j=1}^{n_H-1}  r_{H,j}
-(n_H-1) r_{H, 0}\right)
$$
\beq{fin}
=\sum_{k=1}^N b_k (\tau) - |\mathcal C| N + |\mathcal C|  n_H  r_{H, 0}
\eeq
since  $\sum_{j=0}^{n_H-1}  r_{H,j} = \dim \tau^* =N$. It is shown in \cite[Corollary 3.8(vi)]{Op} that numbers $r_{H,j}$ are preserved under KZ twists. Hence $r_{H, 0}=N-1$ as this is easy to see for $\tau = V$. Since $|{\mathcal R}_{\mathcal C}|=|{\mathcal C}| (n_H-1)$ the relation \eqref{propr} follows from equalities \eqref{interm0}, \eqref{fin}.
\end{proof}

Proposition \ref{bb} has the following corollary.

\begin{proposition}\label{bbcor}
Let $\tau$ be an irreducible representation of $W$ such that $\tau = {\rm kz}_m(V)$ for some integer multiplicity function $m$. Then $$\sum_{k=1}^N b_k = |\mathcal A|.$$
\end{proposition}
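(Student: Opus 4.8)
The plan is to use Proposition~\ref{bb} to reduce the general case to a single, easily computed base case by ``peeling off'' one $W$-orbit of hyperplanes at a time. Concretely, let $\tau = \mathrm{kz}_m(V)$ for some integer multiplicity function $m$, and write $\mathcal A = \mathcal C_1 \sqcup \cdots \sqcup \mathcal C_r$ as the disjoint union of its $W$-orbits. The KZ twist is a permutation of irreducible representations and $\mathrm{kz}_{m+m'} = \mathrm{kz}_m \circ \mathrm{kz}_{m'}$ on the appropriate level; in particular, for any integer multiplicity $m$ one can realize $V$ itself as $V = \mathrm{kz}_{-m}(\tau)$. I would use this to connect $\tau$ to $V$ by a chain of elementary twists $\mathrm{kz}_{1_{\mathcal C}}$, each adding or subtracting the characteristic function of one orbit.

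First I would record the base case: for $\tau = V$ (equivalently $m = 0$), the dual representation $V^*$ appears in the coinvariant algebra $SV^*/((SV^*)^W_+)$ precisely in the degrees $d_1 - 1, \ldots, d_N - 1$, where $d_1, \ldots, d_N$ are the degrees of the basic invariants of $W$; this is the standard fact that the reflection representation sits in coinvariant degree $d_i - 1$. Hence $\sum_{k=1}^N b_k(V) = \sum_{i=1}^N (d_i - 1) = |\mathcal A|$, the last equality being the classical identity $\sum (d_i - 1) = |\mathcal R|$ (number of reflections), which for the reflection arrangement equals $\sum_{H \in \mathcal A}(n_H - 1) = |\mathcal A|$ when all $n_H = 2$, and more generally one checks $|\mathcal R| = \sum_{H\in\mathcal A}(n_H-1)$ while $|\mathcal A| = \sum_H 1$; I need to be a little careful here, so the cleaner route is to invoke \eqref{sum1} with $\tau = V$: since $\mathrm{Res}^W_{W_H} V^* = \det_{W_H}^{-1} \oplus (\det_{W_H}^0)^{\oplus (N-1)}$, we get $r_{H,1} = 1$ and $r_{H,j} = 0$ for $j \geq 2$, so $\sum_k b_k(V) = \sum_{H \in \mathcal A} \sum_{j=1}^{n_H-1} j\, r_{H,j} = \sum_{H\in\mathcal A} 1 = |\mathcal A|$.

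Next comes the inductive step. Proposition~\ref{bb} asserts that passing from $\tau$ to $\tau' = \mathrm{kz}_{1_{\mathcal C}}\tau$ leaves $\sum_{k=1}^N b_k$ unchanged, for any orbit $\mathcal C$. Since every $\tau = \mathrm{kz}_m(V)$ is obtained from $V$ by a finite composition of such elementary KZ twists (writing $m = \sum_{\mathcal C} m_{\mathcal C} 1_{\mathcal C}$ and building up $m$ one orbit-step at a time, allowing also inverse twists $\mathrm{kz}_{-1_{\mathcal C}}$ which are likewise covered by running Proposition~\ref{bb} in the other direction with the roles of $\tau, \tau'$ swapped), the quantity $\sum_k b_k$ is an invariant of this whole orbit of representations under the group generated by the elementary twists. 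Therefore $\sum_{k=1}^N b_k(\tau) = \sum_{k=1}^N b_k(V) = |\mathcal A|$.

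The main obstacle I anticipate is the bookkeeping in the reduction to $V$: one must justify that every $\tau$ of the form $\mathrm{kz}_m(V)$ is reachable from $V$ by a chain of $\mathrm{kz}_{\pm 1_{\mathcal C}}$ steps \emph{staying within representations of the form} $\mathrm{kz}_{m'}(V)$ so that Proposition~\ref{bb} applies at each step (it is stated for $\tau = \mathrm{kz}_{m}(V)$). This is fine because $\mathrm{kz}$ is compatible with addition of multiplicity functions, so intermediate representations are automatically of this form; but it is worth spelling out. A secondary minor point is the direction of Proposition~\ref{bb} for subtracting an orbit's multiplicity — but \eqref{propr} is symmetric in $\tau, \tau'$, so there is no real issue. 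Everything else is a direct appeal to Proposition~\ref{bb} and the base computation via \eqref{sum1}.
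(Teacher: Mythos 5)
Your proof is correct and follows essentially the same route as the paper: establish the base case $\tau=V$ via formula \eqref{sum1} (using $r_{H,1}=1$, $r_{H,j}=0$ for $j\ge 2$), then propagate the invariance of $\sum_k b_k$ along a chain of elementary KZ twists using Proposition~\ref{bb} together with ${\rm kz}_m\circ{\rm kz}_{m'}={\rm kz}_{m+m'}$. The extra care you take about staying within representations of the form ${\rm kz}_{m'}(V)$ and about running Proposition~\ref{bb} in both directions is a reasonable elaboration of what the paper leaves implicit.
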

\begin{proof}
The required statement is well known for $\tau = V$ \cite{OT} (and it also follows from formula  \eqref{sum1} since for $\tau = V$ we have  $r_{H,0}=N-1$, $r_{H,1}=1$, and $r_{H,j}=0$ for $j\ge 2$ in \eqref{decomp1}). Also 
it is established in \cite{BC} that ${\rm kz}_m \circ {\rm kz}_{m'} = {\rm kz}_{m+m'}$ for any integer multiplicity functions $m, m'$.  Hence the statement follows by applying Proposition \ref{bb}. 
\end{proof}

Now we can proceed to find determinant of $\mathcal Q$.
\begin{lemma}\label{propor}
We have $\det {\mathcal Q} = a \prod_{H\in \A} \a_H(x)^{m_H n_H +1}$ for some $a\in \C$.
\end{lemma}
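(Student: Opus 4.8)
The strategy is the standard one for identifying the determinant of a candidate coefficient matrix with the defining polynomial of the multiarrangement. First I would observe that each column $L^{(j)} = \sum_i q_i^{(j)}\partial_i$ lies in $D_m = D(\A, mn+1)$, so $L^{(j)}(\alpha_H) \in \alpha_H^{m_H n_H + 1} SV^*$ for every $H \in \A$. Since $L^{(j)}(\alpha_H)$ is the $\C$-linear combination of the entries $q_i^{(j)}$ obtained by pairing with the coefficients of $\alpha_H$, an elementary column operation (change of coordinates on $V^*$ sending $x_1 \mapsto \alpha_H$, fixing a complement) shows that after this change of basis the first row of $\mathcal Q$ becomes divisible by $\alpha_H^{m_H n_H + 1}$; hence $\alpha_H^{m_H n_H + 1} \mid \det \mathcal Q$.

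Next I would argue that the powers for distinct hyperplanes multiply: the $\alpha_H$ for $H \in \A$ are pairwise non-proportional irreducible polynomials in the UFD $SV^*$, so $\prod_{H \in \A} \alpha_H^{m_H n_H + 1} \mid \det \mathcal Q$. The degree of this product is $\sum_{H \in \A} (m_H n_H + 1) = \sum_{H\in\A} m_H n_H + |\A|$. On the other hand, $\det \mathcal Q$ is homogeneous of degree $\sum_{k=1}^N d_k$, and by \eqref{degrees} together with Proposition \ref{bbcor} (applied to $\tau = \Phi(V) = \mathrm{kz}_{-m}(V)$, which is of the required form) we have
\[
\sum_{k=1}^N d_k = \sum_{k=1}^N \Big( \frac{1}{N}\sum_{H\in\A} n_H m_H + b_k \Big) = \sum_{H\in\A} n_H m_H + \sum_{k=1}^N b_k = \sum_{H\in\A} n_H m_H + |\A|.
\]
So the divisor and $\det \mathcal Q$ have the same degree, forcing $\det \mathcal Q = a \prod_{H\in\A}\alpha_H(x)^{m_H n_H + 1}$ for some constant $a \in \C$. (Here I am implicitly using that $\det \mathcal Q \neq 0$; this follows because the $L^{(k)}$ form a basis of $D_m^W$ over $(SV^*)^W$, hence are $SV^*$-linearly independent — if $\det\mathcal Q = 0$ there would be a nonzero $SV^*$-relation among the columns, contradicting independence.)

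The main obstacle is the divisibility step $\alpha_H^{m_H n_H+1}\mid\det\mathcal Q$ done carefully. One must be sure that pairing with the dual basis really does produce a whole row divisible by $\alpha_H^{m_H n_H+1}$ in some coordinate system, and that the change of coordinates only multiplies $\det\mathcal Q$ by a nonzero scalar. Concretely: choose coordinates $y_1,\dots,y_N$ on $V^*$ with $y_1 = \alpha_H$; then $L^{(j)} = \sum_i \tilde q_i^{(j)} \partial_{y_i}$ with $\tilde q_1^{(j)} = L^{(j)}(\alpha_H) \in \alpha_H^{m_H n_H+1}SV^*$, so the first row of the matrix in these coordinates is divisible by $\alpha_H^{m_H n_H+1}$, giving the divisibility; and the coordinate change rescales the determinant by the (nonzero) Jacobian constant. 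Everything else — that distinct $\alpha_H$ contribute coprime factors, and the degree count — is routine given Proposition \ref{bbcor}.
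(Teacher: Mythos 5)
Your argument is essentially the paper's own proof: divisibility of $\det\mathcal Q$ by each $\alpha_H^{m_Hn_H+1}$ via a constant linear change of coordinates (equivalently, taking the linear combination of rows with coefficients those of $\alpha_H$), coprimality of the pairwise non-proportional linear forms in the UFD $SV^*$, and then the degree count $\sum_{k}d_k=\sum_{H\in\A}m_Hn_H+|\A|$ obtained from \eqref{degrees} together with Proposition \ref{bbcor} applied to $\tau=\Phi(V)$. All of this is correct and matches the paper step for step.

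One caveat: your parenthetical claim that $\det\mathcal Q\neq 0$ ``follows because the $L^{(k)}$ form a basis of $D_m^W$ over $(SV^*)^W$, hence are $SV^*$-linearly independent'' is not a valid inference. Linear independence over the invariant subring does not in general give linear independence over all of $SV^*$; promoting invariant coefficients to arbitrary polynomial coefficients is precisely the nontrivial point here, and the paper isolates it as a separate statement (Lemma \ref{lem2}), proved by evaluating a basis of $Q_m$ on a generic $W$-orbit and using that $Q_m$ is free of rank $|W|$ over $(SV^*)^W$. Fortunately the present lemma allows $a=0$, so your proof closes without this: either $\det\mathcal Q=0$ (take $a=0$), or it is a nonzero homogeneous polynomial divisible by $\prod_{H\in\A}\alpha_H^{m_Hn_H+1}$ and of the same degree, hence a constant multiple of it. Just do not present the nonvanishing of $a$ as a consequence of independence over invariants.
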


\begin{proof}
It is easy to see by considering linear combinations of rows of the matric  ${\mathcal Q}$ that 
$$
\det {\mathcal Q} = a \prod_{H\in \A} \a_H(x)^{m_H n_H +1}
$$ 
for some $a\in \C[x_1, \ldots, x_N]$. It follows from \eqref{degrees} and Proposition \ref{bbcor} that 
$$
\deg \det {\mathcal Q} = \sum_{k=1}^N d_k = \sum_{H\in \A} m_H n_H + \sum_{k=1}^N b_k = \sum_{H\in \A} m_H n_H + |\A|
$$ 
so  $a$ is constant.
\end{proof}

\begin{lemma}\label{lem2}
In the notation of Lemma \ref{propor} the constant $a \ne 0$.
\end{lemma}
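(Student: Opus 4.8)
The plan is to show that $\det\mathcal{Q}$ is not identically zero by exhibiting a point where the $N$ vector fields $L^{(1)},\dots,L^{(N)}$ are linearly independent over $\C$. Suppose for contradiction that $a=0$. Then at a generic point $x_0\in V$ the matrix $\mathcal{Q}(x_0)$ is singular, so there exist constants $\lambda_1,\dots,\lambda_N\in\C$, not all zero, with $\sum_k \lambda_k q_i^{(k)}(x_0)=0$ for all $i$. Equivalently the $W$-invariant vector field $L=\sum_k \lambda_k L^{(k)}=\sum_i\big(\sum_k\lambda_k q_i^{(k)}\big)\partial_i$ vanishes at $x_0$, i.e.\ $L(\xi)(x_0)=0$ for every $\xi\in V^*$. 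If this happened at \emph{every} point of $V$ we would get $L=0$, contradicting linear independence of the $L^{(k)}$ over $(SV^*)^W$ (they form a basis of $D_m^W$); so the issue is only that a single bad point $x_0$ could be a zero of all the components simultaneously. The real content is therefore to rule this out on a Zariski-dense set, equivalently to show $\det\mathcal{Q}\not\equiv 0$.

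First I would pass to the $\C W$-module structure. By Theorem~\ref{mth1} and Proposition~\ref{prop2}, for each nonzero $L\in D_m^W$ the span $V_f=\langle f_i\rangle$ is isomorphic to $V^*$ as a $W$-module, and the components are obtained by applying $L$ to the coordinate functions. A convenient reformulation: evaluation at a point $x_0$ gives a linear map $\mathrm{ev}_{x_0}\colon D_m^W\to V^*$, $L\mapsto (\xi\mapsto L(\xi)(x_0))$ — wait, more precisely $L\mapsto \big(x_i\mapsto f_i(x_0)\big)$, i.e.\ $L\mapsto \sum_i f_i(x_0)\,\partial_i|_{x_0}$, landing in the fibre $T_{x_0}V\cong V$. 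The matrix $\mathcal{Q}(x_0)$ represents this map composed with the basis identification; $\det\mathcal{Q}(x_0)\ne 0$ exactly when $\mathrm{ev}_{x_0}$ is an isomorphism $D_m^W\xrightarrow{\sim} V$. So I need: there exists $x_0$ such that the only $L\in D_m^W$ with all components vanishing at $x_0$ is $L=0$.

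The key step is a local analysis at a generic point, using Saito's criterion-style reasoning as in \cite{T80}. Pick $x_0\in V$ lying off every hyperplane of $\A$; then near $x_0$ the defining condition $\alpha_H^{m_Hn_H+1}\mid L(\alpha_H)$ is vacuous (each $\alpha_H$ is a unit there), so $D_m^W$ localized at $x_0$ is just $(\Der)^W$ localized at $x_0$. Now I claim the evaluation map $(\Der)^W_{x_0}\to T_{x_0}V$ is surjective with the appropriate rank: because the $L^{(k)}$ span $D_m^W$ over $(SV^*)^W$ and the quotient field contains enough invariants, their values at a generic $x_0$ must span — if they spanned only a proper subspace $U\subsetneq V$ at every generic point, then by $W$-invariance and the fact that $W$ acts irreducibly on $V$, the union of the $W$-translates of $U$ would be all of $V$, yet the rank would drop on a dense set, forcing $\det\mathcal{Q}\equiv 0$ only as a polynomial identity that contradicts $\deg\det\mathcal{Q}=\sum m_Hn_H+|\A|$ being achieved with a nonzero leading term. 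The cleanest route: localize $D_m^W$ at the generic point of $V$ (the function field $\C(V)$); since $\Der\otimes_{SV^*}\C(V)$ is free of rank $N$ and the quasi-invariance conditions become trivial over $\C(V)$, the module $D_m^W\otimes_{(SV^*)^W}\mathrm{Frac}((SV^*)^W)$ has rank $N$, hence $L^{(1)},\dots,L^{(N)}$ are linearly independent over $\mathrm{Frac}((SV^*)^W)$, hence $\det\mathcal{Q}\ne 0$ in $\C(V)$, hence $a\ne 0$.

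I expect the main obstacle to be making the ``rank $N$ over the function field'' argument airtight — specifically verifying that $D_m^W\otimes \mathrm{Frac}((SV^*)^W)$ really has dimension $N$ and that the $L^{(k)}$, which a priori could be $(SV^*)^W$-linearly dependent, are in fact a genuine free basis. This follows from the Hilbert series computation: the rank of $D_m^W$ as a graded free $(SV^*)^W$-module equals the number of free generators predicted by \eqref{hilb1} together with $\chi_{\Phi(V)}$, which is $\dim V=N$; combined with Theorem~\ref{mth1} identifying $D_m^W$ with $\mathrm{Hom}_W(V^*,Q_m)$ and the freeness of $Q_m$ over $(SV^*)^W$ from \cite{BC}, one gets that $\mathrm{Hom}_W(V^*,Q_m)$ is free over $(SV^*)^W$ of rank $N$, so the $N$ elements $L^{(k)}$ — being the images under $\Theta$ of a homogeneous $(SV^*)^W$-basis of $\mathrm{Hom}_W(V^*,Q_m)$ — are automatically a free basis, and in particular $\mathrm{Frac}((SV^*)^W)$-independent. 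That independence is precisely $\det\mathcal{Q}\ne 0$, i.e.\ $a\ne 0$.
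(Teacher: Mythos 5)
Your reduction of the lemma to the statement $\det\mathcal{Q}\not\equiv 0$ is correct, but the step where you actually prove that statement is exactly where the proposal breaks down. You establish (correctly, and it is already given before the lemma) that $L^{(1)},\dots,L^{(N)}$ form a free basis of $D_m^W$ over $(SV^*)^W$, hence are linearly independent over $\mathrm{Frac}((SV^*)^W)$, and then conclude that ``that independence is precisely $\det\mathcal{Q}\ne 0$.'' It is not: $\det\mathcal{Q}\ne 0$ is equivalent to linear independence of the $L^{(k)}$ over $\mathrm{Frac}(SV^*)=\C(V)$, which is a degree-$|W|$ extension of $\mathrm{Frac}((SV^*)^W)$, and independence over a subfield does not in general imply independence over an extension field. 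Bridging exactly this gap is the entire content of the lemma. Your earlier attempted justification (the ``$W$-translates of $U$'' passage) is circular: it appeals to $\deg\det\mathcal{Q}=\sum_H m_Hn_H+|\A|$ ``being achieved with a nonzero leading term,'' but Lemma~\ref{propor} computes that degree only conditionally on $a\neq 0$; if $a=0$ then $\det\mathcal{Q}\equiv 0$ and there is nothing to contradict.

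The gap is repairable, in at least two ways. A quick fix: if $\sum_k c_k L^{(k)}=0$ with $c_k\in\C(V)$ and, say, $c_N=1$, then applying $w\in W$ and using $W$-invariance of the $L^{(k)}$ gives $\sum_k w(c_k)L^{(k)}=0$; summing over $W$ yields a relation with coefficients in $\C(V)^W=\mathrm{Frac}((SV^*)^W)$ whose $N$-th coefficient is $|W|\neq 0$, contradicting the basis property. The paper argues differently: it completes the components $q_i^{(k)}$ to a free $(SV^*)^W$-basis of $Q_m$, evaluates on a full $W$-orbit $\{gv\}$ of a generic point $v$, shows the resulting evaluation vectors span $\C^{|W|}$ (using that $p\prod_{H\in\A}\alpha_H^{m_Hn_H}\in Q_m$ for every polynomial $p$, with an invariant factor), and then derives a contradiction by propagating a putative linear dependence of the rows of $\mathcal{Q}(v)$ over the whole orbit via $W$-equivariance. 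One of these arguments (or an equivalent descent statement such as $\Der^W\otimes_{(SV^*)^W}SV^*\cong\Der$) must be supplied; as written, your proposal asserts the key equivalence rather than proving it.
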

\begin{proof}
Recall that $Q_m$ is a free module over $(SV^*)^W$ of dimension ${\mathcal N}=|W|$. 
One can take polynomials $q_i^{(k)}$, $i,k=1, \ldots, {\mathcal N}$ as part of a free basis $\{f_1, \ldots, f_{{\mathcal N}}\}$ of $Q_m$ over $(SV^*)^W$. 

Let us fix $v \in V$ outside of all the reflection hyperplanes. Consider the linear span  $U$ of vectors $(f_i(g_1v), \ldots, f_{i}(g_{\mathcal N} v))\in \C^{\mathcal N}$ ($i=1, \ldots, {\mathcal N}$), where $g_1, \ldots, g_{\mathcal N}$ is an ordered list of all the elements of $W$. We claim that $U = \C^{\mathcal N}$. Indeed, the linear space $U$ coincides with the space spanned by vectors $(p(g_1v), \ldots, p(g_{\mathcal N}v))$ as $p$ runs over the space of quasi-invariants $Q_m$. It is known that the space $\widetilde U$ spanned by vectors  $(p(g_1v), \ldots, p(g_{\mathcal N}v))$ where $p\in \C[x_1, \ldots, x_N]$ coincides with $\C^{\mathcal N}$. Notice that for any polynomial $p\in \C[x_1, \ldots, x_N]$ the polynomial $q= p \prod_{H\in \A} \a_H (x)^{m_H n_H} \in Q_m$. Since the polynomial $\prod_{H\in \A} \a_H (x)^{m_H n_H}$ is $W$-invariant it follows that $U = \widetilde U = \C^{\mathcal N}$. 

Let us suppose now that $a=0$. Then $\det {\mathcal Q}(v) =0$ so the rows of the matrix ${\mathcal Q}(v)$ are dependent. We assume without loss of generality that 
\begin{equation}
\label{eq1}
q^{(N)}_i(v) = \sum_{k=1}^N b_k q_i^{(k)}(v)
\end{equation}
for some $b_k \in \C$ and all $i=1, \ldots, N$. From $W$-invariance of fields $L^{(k)}$ we get that for any $g\in W$, $i, k=1, \ldots, N$, 
\begin{equation}
\label{eq2}
q_i^{(k)} (g v) = \sum_{j=1}^N a_{ij} q_j^{(k)}(v),
\end{equation}
 where $a_{ij}\in \C$ correspond to the matrix of $g^{-1}\colon V \to V$ and do not depend on $k$. 
It follows from \eqref{eq1}, \eqref{eq2} that
$$
q_i^{(N)}(g v) = \sum_{k=1}^N b_k q_i^{(k)}(gv),
$$
and hence $\dim U < {\mathcal N}$, which is a contradiction.
\end{proof}

To prove the freeness of $D_m$, we introduce a fundamental method to check the freeness of hyperplane arrangements.

\begin{theorem}[Saito's criterion, \cite{Sacriterion}, \cite{Z}]
Let $\A$ be an (affine) arrangement in $V$, and $r\colon \A \rightarrow \N$. Then $\theta_1,\ldots,\theta_N \in D(\A,r)$ form a free basis for 
$D(\A,r)$ if and only if 
$$
\det (\theta_i(x_j))_{1 \le i,j \le N} =c \prod_{H \in \A} \alpha_H^{r(H)}
$$
for some nonzero scalar $c$. Moreover, if $\A$ is central and 
$\theta_1,\ldots,\theta_N \in D(\A,r)$ are homogeneous derivations, then $\theta_1,\ldots,\theta_N$ form a free basis for $D(\A,r)$ if and only if they satisfy the following two conditions:
\begin{itemize}
    \item [(1)] $\theta_1,\ldots,\theta_N$ are independent over $SV^*$.
    \item[(2)] $\sum_{i=1}^N \deg \theta_i=\sum_{H \in \A} r(H)$.
\end{itemize}
\end{theorem}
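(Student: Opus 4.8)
The plan is to prove the Saito--Ziegler criterion in three movements: a divisibility lemma valid for arbitrary tuples in $D(\A,r)$, the ``easy'' implication via Cramer's rule, and the ``hard'' implication via localization at height-one primes; the graded ``moreover'' statement then follows formally by comparing degrees. Throughout write $M=(\theta_i(x_j))_{1\le i,j\le N}$ for the coefficient matrix. \emph{Step 1 (divisibility).} First I would show that for \emph{any} $\theta_1,\dots,\theta_N\in D(\A,r)$ one has $\prod_{H\in\A}\alpha_H^{r(H)}\mid \det M$. Fix $H\in\A$ and choose new linear coordinates $y_1,\dots,y_N$ on $V$ with $y_1=\alpha_H$; since this is an invertible $\C$-linear change of variables, $\det(\theta_i(y_j))$ and $\det M$ differ by a nonzero scalar. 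The first column of $(\theta_i(y_j))$ is $(\theta_i(\alpha_H))_i$, and every entry lies in $\alpha_H^{r(H)}SV^*$ by the definition of $D(\A,r)$, so $\alpha_H^{r(H)}\mid\det(\theta_i(y_j))$ and hence $\alpha_H^{r(H)}\mid\det M$. As distinct hyperplanes give pairwise non-proportional, hence coprime, irreducible forms $\alpha_H$ in the UFD $SV^*$, the product divides $\det M$.

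\emph{Step 2 (the implication $\Leftarrow$).} Assume $\det M=c\prod_{H\in\A}\alpha_H^{r(H)}$ with $c\in\C\setminus\{0\}$. Since $\det M\ne 0$, the $\theta_i$ form a basis of $\Der\otimes_{SV^*}K$ over the field of fractions $K=\mathrm{Frac}(SV^*)$; in particular they are independent over $SV^*$. Given $\theta\in D(\A,r)$, write $\theta=\sum_{i}g_i\theta_i$ with $g_i\in K$. Comparing coefficients of $\partial_j$ and applying Cramer's rule gives $g_i=\det M^{(i)}/\det M$, where $M^{(i)}$ is $M$ with its $i$-th row replaced by $(\theta(x_j))_j$. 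But $M^{(i)}$ is the coefficient matrix of $\theta_1,\dots,\theta_{i-1},\theta,\theta_{i+1},\dots,\theta_N\in D(\A,r)$, so Step~1 yields $\prod_{H\in\A}\alpha_H^{r(H)}\mid\det M^{(i)}$, whence $g_i\in SV^*$. Thus $\theta\in\sum_i SV^*\theta_i$, and together with independence this shows $\theta_1,\dots,\theta_N$ form a free basis of $D(\A,r)$.

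\emph{Step 3 (the implication $\Rightarrow$, and the graded addendum).} Suppose $\theta_1,\dots,\theta_N$ is a free basis of $D(\A,r)$. By Step~1, $\det M=Q\prod_{H\in\A}\alpha_H^{r(H)}$ for some $Q\in SV^*$, and I must show $Q\in\C\setminus\{0\}$. This I would do by localizing at each height-one prime. If the prime is $(\alpha_{H_0})$ for some $H_0\in\A$, then inverting $\alpha_H$ for all $H\ne H_0$ identifies $D(\A,r)_{(\alpha_{H_0})}$ with $D(\{H_0\},r(H_0))_{(\alpha_{H_0})}$, which is free over the discrete valuation ring $SV^*_{(\alpha_{H_0})}$ with basis $\alpha_{H_0}^{r(H_0)}\partial_{y_1},\partial_{y_2},\dots,\partial_{y_N}$ in coordinates with $y_1=\alpha_{H_0}$; two bases of a free module over a local ring differ by a matrix of unit determinant, so the $\alpha_{H_0}$-adic valuation of $\det M$ equals $r(H_0)$, i.e. $\alpha_{H_0}\nmid Q$. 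If instead $(f)$ is generated by an irreducible $f$ not proportional to any $\alpha_H$, then $D(\A,r)_{(f)}=\Der_{(f)}$ is free with basis $\partial_1,\dots,\partial_N$ of unit determinant, so $f\nmid\det M$ and hence $f\nmid Q$. Therefore $Q$ has no irreducible factor and is a nonzero constant. For the homogeneous ``moreover'': a free basis is $SV^*$-independent (condition~(1)), and from $\det M=c\prod_{H\in\A}\alpha_H^{r(H)}$, noting that each $\theta_i(x_j)$ is homogeneous of degree $\deg\theta_i$ or zero so that $\deg\det M=\sum_i\deg\theta_i$, one gets $\sum_{i=1}^N\deg\theta_i=\sum_{H\in\A}r(H)$ (condition~(2)). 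Conversely, (1) makes $\det M\ne 0$, so $\det M=Q\prod_{H\in\A}\alpha_H^{r(H)}$ with $Q$ homogeneous of degree $\sum_i\deg\theta_i-\sum_H r(H)=0$ by (2); thus $Q$ is a nonzero constant and Step~2 applies.

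The technical heart is Step~3: the assertion that localizing $D(\A,r)$ at the prime ideal cut out by one hyperplane annihilates the contribution of every other hyperplane and reduces to the explicit, manifestly free single-hyperplane module $D(\{H_0\},r(H_0))$. I expect the identification $D(\A,r)_{(\alpha_{H_0})}\cong D(\{H_0\},r(H_0))_{(\alpha_{H_0})}$ together with the computation of a local free basis to be the step requiring the most care; the remaining arguments are linear algebra over $SV^*$ and its field of fractions, built on the divisibility lemma of Step~1.
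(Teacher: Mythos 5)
The paper does not prove this theorem: it is quoted as a known result with citations to K.~Saito and to Ziegler (for the multiarrangement version), so there is no in-paper argument to compare against. Your proof is correct and is essentially the standard one from the literature (cf.\ Orlik--Terao, Theorem~4.19, adapted to multiplicities as in Ziegler): the divisibility lemma, the Cramer's-rule direction, and the determination of the extraneous factor $Q$ by localizing at height-one primes all go through, including in the affine case, since an affine change of coordinates $y_1=\alpha_H$ still alters $\det(\theta_i(x_j))$ only by a nonzero constant and distinct (possibly parallel) hyperplanes still give non-associate irreducibles. The one step you rightly single out --- the identification $D(\A,r)_{(\alpha_{H_0})}\cong D(\{H_0\},r(H_0))_{(\alpha_{H_0})}$ --- is justified because $D(\A,r)$ is the kernel of $\theta\mapsto(\theta(\alpha_H)\bmod \alpha_H^{r(H)})_H$ and localization is exact, after which the conditions at $H\neq H_0$ become vacuous; together with the explicit local basis $\alpha_{H_0}^{r(H_0)}\partial_{y_1},\partial_{y_2},\ldots,\partial_{y_N}$ this pins down the $\alpha_{H_0}$-adic valuation of the determinant, and the graded addendum then follows from the degree count exactly as you state.
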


Now Lemmas \ref{propor}, \ref{lem2} together with Saito's criterion imply the following statement.

\begin{theorem}\label{thmFreeAll}
The space $D_m$ is a free module over $S V^*$ with a basis given by vector fields $L^{(k)}$, $k=1, \ldots, N$. These vector fields are homogeneous of degree $d_k$ given by \eqref{degrees}. 
\end{theorem}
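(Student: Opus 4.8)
The proof of Theorem \ref{thmFreeAll} is essentially immediate once all the preparatory material is in place, so the plan is simply to assemble the pieces in the right order. First I recall that by Theorem \ref{mth1} the space $D_m^W$ of $W$-invariant logarithmic vector fields is free over $(SV^*)^W$, and a free basis is furnished by the $N$ vector fields $L^{(k)}=\sum_{i=1}^N q_i^{(k)}\p_i$, which correspond under $\Theta$ to a homogeneous basis of $\mathrm{Hom}_W(V^*,Q_m)$ via the isotypic decomposition $Q_m^{V^*}\cong\mathrm{Hom}_W(V^*,Q_m)\otimes V^*$; the homogeneity lets us take the $q_i^{(k)}$ homogeneous of degree $d_k$, with $d_k$ given by formula \eqref{degrees}. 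These $L^{(k)}$ manifestly lie in $D_m=D(\mathcal A, mn+1)$, since $D_m^W\subseteq D_m$.

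Next I invoke Saito's criterion in the homogeneous central form stated above. The arrangement $\mathcal A$ is central, and the $L^{(k)}$ are homogeneous, so it suffices to verify the two conditions: (1) the $L^{(k)}$ are independent over $SV^*$, and (2) $\sum_{k=1}^N\deg L^{(k)}=\sum_{k=1}^N d_k$ equals $\sum_{H\in\mathcal A}(m_Hn_H+1)=\sum_{H\in\mathcal A}m_Hn_H+|\mathcal A|$. Both of these are exactly what Lemmas \ref{propor} and \ref{lem2} deliver: together they assert that $\det\mathcal Q = \det(q_i^{(j)})_{i,j} = a\prod_{H\in\mathcal A}\a_H^{m_Hn_H+1}$ with $a\in\C\setminus\{0\}$. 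The non-vanishing of this determinant as a polynomial gives independence over $SV^*$ (condition (1)), and matching the degree of the product $\prod_H \a_H^{m_Hn_H+1}$, which is $\sum_H m_Hn_H+|\mathcal A|$, against $\sum_k d_k$ — the computation already carried out in the proof of Lemma \ref{propor} using \eqref{degrees} and Proposition \ref{bbcor} — gives condition (2). Hence Saito's criterion applies and concludes that $L^{(1)},\dots,L^{(N)}$ form a free basis of $D_m$ over $SV^*$.

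In fact it is cleaner to apply the first (non-homogeneous) form of Saito's criterion directly: $\det(L^{(i)}(x_j))_{i,j}=\det\mathcal Q = a\prod_{H\in\mathcal A}\a_H^{m_Hn_H+1}$ with $a\neq 0$ is precisely the hypothesis $\det(\theta_i(x_j))=c\prod_H\a_H^{r(H)}$ for $r=mn+1$, so the $L^{(k)}$ form a free basis for $D(\mathcal A,mn+1)=D_m$ at once. The statement that the basis elements are homogeneous of degree $d_k$ given by \eqref{degrees} is recorded separately but is already built into the construction of the $q_i^{(k)}$ from the homogeneous basis of the isotypic component $Q_m^{V^*}$.

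There is no real obstacle remaining at this stage; the substance of the argument lies entirely in the earlier results — most notably the isomorphism of Theorem \ref{mth1}, the degree count of Proposition \ref{bbcor} (via Proposition \ref{bb}), and the non-degeneracy Lemma \ref{lem2}, whose proof exploits that multiplying an arbitrary polynomial by the $W$-invariant $\prod_H\a_H^{m_Hn_H}$ lands in $Q_m$, forcing the evaluation span $U$ to be all of $\C^{\mathcal N}$. The only thing to be careful about in writing the final proof is to state explicitly which form of Saito's criterion is being used and to point to Lemmas \ref{propor} and \ref{lem2} for the determinant identity and to \eqref{degrees} together with the construction preceding it for the homogeneity and the degrees $d_k$.
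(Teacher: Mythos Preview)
Your proposal is correct and follows exactly the paper's approach: the proof there is the single sentence that Lemmas \ref{propor} and \ref{lem2} together with Saito's criterion imply the statement, and you have simply unpacked this in detail. Your observation that the first (determinantal) form of Saito's criterion applies directly is the cleanest way to phrase it, and the homogeneity and degrees $d_k$ are indeed already part of the construction of the $L^{(k)}$ preceding the theorem.
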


The freeness of $D_m$ was established earlier in the real cases. Namely, the case $m=0$ was established
in \cite{Sa1}, the case $m=const$ was established in 
\cite{T}, and the general invariant multiplicity function $m$ was treated in \cite{ATWaka}.
Furthermore,
in 
\cite{HMRS} the freeness of $D_m$ with constant $m$ was established for well-generated complex reflection groups $W$ and for the infinite series $W=G(r, p, N)$  (with a slightly different formula for the exponents). Theorem \ref{thmFreeAll} includes and extends all these results.

We note that the proof of Theorem \ref{thmFreeAll} is uniform and it makes use of properties of the degrees \eqref{degrees} without having explicit formulas for the degrees $b_k$, which is unusual in theory of hyperplane  arrangements.

Consider now the space of logarithmic polynomial vector fields $\widetilde{D}_m$ for the arrangement $\A$ where the multiplicity of a hyperplane $H\in \A$ is $m_H n_H$. The following statement takes place.




\begin{theorem}\label{thm311} There is an isomorphism 
$\rho\colon \mathbf{Q}_m(V) \xrightarrow{\sim}  \widetilde{D}_m$ of $SV^*$-modules, so the module $\widetilde{D}_m$ is free. The $SV^*$-module $\widetilde{D}_m$ has a free basis consisting of $N$ homogeneous elements $L^{(k)}
=\sum_{i=1}^{N}f^{(k)}_i\partial_i$, where $\deg{f_i^{(k)}}=\dfrac{1}{N}\sum_{H \in \mathcal{A}}m_Hn_H$.
Moreover, the map $\rho$ is an isomorphism of $\C{W}$-modules. Thus the $\C$-span $\langle L^{(k)} \rangle$ in $\widetilde{D}_m$ is isomorphic to 
$\Phi(V)$ as $\C{W}$-modules.
\end{theorem}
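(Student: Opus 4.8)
The plan is to construct the isomorphism $\rho$ explicitly by sending a vector-valued quasi-invariant to the corresponding vector field, exactly in the spirit of Theorem~\ref{mth1}, and then to transport the known freeness of $\mathbf{Q}_m(V)$ as an $SV^*$-module to $\widetilde D_m$. Concretely, given $\varphi = \sum_{i=1}^N f_i \otimes e_i \in SV^*\otimes V$, set $\rho(\varphi) = \sum_{i=1}^N f_i\,\partial_i \in \Der$. This is clearly an $SV^*$-module homomorphism, and it is a linear isomorphism between $SV^*\otimes V$ and $\Der$, since both are free $SV^*$-modules of rank $N$ with the $e_i$ matched to the $\partial_i$. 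The first substantive point is to check that $\rho$ maps $\mathbf{Q}_m(V)$ onto $\widetilde D_m$. For this I would unwind the defining condition of $\mathbf{Q}_m(V)$: $\varphi$ lies in $\mathbf{Q}_m(V)$ iff $(1\otimes e_{H,i})(\varphi)\equiv 0 \bmod \langle\alpha_H\rangle^{m_Hn_H}\otimes V$ for all $H$ and $i$. Using the matrix argument of Proposition~\ref{propidemp} applied componentwise, this is equivalent to $\alpha_H^{m_Hn_H} \mid (1\otimes s_H)\varphi - \varphi$ for all $H$, where $s_H$ now acts on $SV^*\otimes V$ diagonally (on polynomials and on the vector part). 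The key computation is then to compare this with the condition $\rho(\varphi)(\alpha_H)\in \alpha_H^{m_Hn_H}SV^*$: writing $s_H(v)=v-\alpha_H(v)\alpha_H^\vee$ and expanding $(1\otimes s_H)\varphi$ in the basis, one finds that the divisibility of all components of $(1\otimes s_H)\varphi-\varphi$ by $\alpha_H^{m_Hn_H}$ matches precisely the divisibility of $\rho(\varphi)(\alpha_H)$. I expect this comparison — getting the action of $s_H$ on the vector part to interact correctly with the action on coefficients, so that the two divisibility conditions line up — to be the main obstacle, though it should be a finite, elementary check analogous to the proof of Proposition~\ref{prop1} (the difference being that here there is no $W$-invariance hypothesis, which is why the $\C W$ and $SV^*$ actions on $\widetilde D_m$ do not commute).

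Once $\rho\colon \mathbf{Q}_m(V)\xrightarrow{\sim}\widetilde D_m$ is established as an isomorphism of $SV^*$-modules, freeness of $\widetilde D_m$ is immediate from the result quoted from \cite{BC} that $\mathbf{Q}_m(V)\cong M_m(\Phi(V))$ is a free $SV^*$-module, with $SV^*$-generators $\varphi^{(k)} = \sum_i f_i^{(k)}\otimes e_i$, $k=1,\ldots,N$, where the $f_i^{(k)}$ are homogeneous of degree $c_V(m) = \frac{1}{N}\sum_{H\in\A}m_Hn_H$ by \eqref{centrec}. Setting $L^{(k)} = \rho(\varphi^{(k)}) = \sum_i f_i^{(k)}\partial_i$ gives the desired homogeneous free basis of $\widetilde D_m$ with the stated degrees. (Alternatively, one could verify freeness directly via Saito's criterion: the determinant $\det(f_i^{(k)})$ is $W$-invariant up to a scalar and, by degree count $N\cdot c_V(m) = \sum_{H}m_Hn_H$ together with the obvious divisibility of each column entry, equals $c\prod_H\alpha_H^{m_Hn_H}$; nonvanishing of $c$ follows from the freeness of $\mathbf{Q}_m(V)$ over $SV^*$, i.e.\ from the generators being a genuine basis. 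But invoking the $SV^*$-module isomorphism is cleaner.)

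Finally, for the $\C W$-equivariance: $W$ acts on $SV^*\otimes V$ diagonally, and on $\Der$ by $(wL)(\xi) = w(L(w^{-1}\xi))$. I would check $\rho(w\cdot\varphi) = w\cdot\rho(\varphi)$ by a direct computation using $w\sum_i f_i(x)\partial_i = \sum_i f_i(w^{-1}x)\, w\partial_i$ together with the transformation of the basis $\{e_i\}$ under $w$; this is the same bookkeeping as in the displayed identity just before Proposition~\ref{prop1} and presents no difficulty beyond care with dual bases. Hence $\rho$ is an isomorphism of $\C W$-modules as well. Since $\mathbf{Q}_m(V)\cong M_m(\Phi(V))$ and the degree-$c_V(m)$ piece of $M_m(\Phi(V))$ — which generates it freely over $SV^*$ and carries the lowest-degree $W$-representation — is isomorphic to $\Phi(V)$ as a $\C W$-module, the span $\langle L^{(k)}\rangle$, being the image under $\rho$ of that bottom piece, is isomorphic to $\Phi(V)$. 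This completes the proof.
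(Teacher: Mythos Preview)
Your proposal is correct and matches the paper's proof: both define $\rho(\sum_i f_i\otimes e_i)=\sum_i f_i\partial_i$, reduce the quasi-invariance condition to divisibility of $\sum_i \alpha_H(e_i)f_i=\rho(\varphi)(\alpha_H)$ by $\alpha_H^{m_Hn_H}$, and then transport freeness, degrees, and the $\C W$-structure from $\mathbf{Q}_m(V)\cong M_m(\Phi(V))$; the only cosmetic difference is that you reach the key reduction via the group-algebra identity behind Proposition~\ref{propidemp}, whereas the paper computes $e_{H,j}e_i$ directly for each $j$ and observes that only $j=n_H-1$ survives. One wording slip to fix: your parenthetical ``$s_H$ now acts \ldots\ diagonally'' contradicts your own correct notation $(1\otimes s_H)$ --- the action relevant to the quasi-invariance condition is on the $V$-factor only, and it is this (not the diagonal action) that makes $(1\otimes(1-s_H))\varphi$ collapse to $\big(\sum_i \alpha_H(e_i)f_i\big)\otimes\alpha_H^\vee$.
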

\begin{proof}
Note that the mp $\widehat \rho\colon $$SV^* \otimes V \to \Der$ defined by 
$\widehat\rho (\sum_{i=1}^{N}f_i\otimes e_i) = \sum_{i=1}^{N}f_i\partial_i$
is an isomorphism of $SV^*$-modules.
Let $\rho$ be the restriction of the map $\widehat \rho$ to the subspace $\mathbf{Q}_m(V)$. We need to prove that 
$\varphi=\sum_{i=1}^{N}f_i \otimes e_i \in \mathbf{Q}_m(V)$ if and only if 
$L= \widehat\rho(\varphi)=\sum_{i=1}^{N}f_i\partial_i \in \widetilde{D}_m$.

The condition 
$L \in \widetilde{D}_m$ is equivalent to divisibility 
\begin{eqnarray}
\alpha_H^{m_Hn_H} \ | \ \sum_{i=1}^{N}f_i(\partial_i(\alpha_H)) \label{cond1}
\end{eqnarray}
in $SV^*$ 
for any $H \in \mathcal{A}$.
On the other hand, the condition  $\varphi \in \mathbf{Q}_m(V)$ is equivalent to the conditions
\begin{eqnarray}
\sum_{i=1}^{N}f_i \otimes ({e}_{H,j}e_i)
\equiv 0 \mod \langle \alpha_H \rangle^{m_Hn_H} \otimes V\label{cond2}
\end{eqnarray}
for all $1 \leq j \leq n_H-1$ and $H \in \mathcal{A}$.

For a generator $s_H \in W_H$ we have 
$s_He_i=e_i-\alpha_H(e_i)\alpha_H^\vee$ and $s_H(\alpha_H^\vee)=\lambda\alpha_H^\vee$,
where $\lambda$ is a primitive $n_H$-th root of unity. Then we have 
$s_H^ue_i=e_i-\dfrac{1-\lambda^u}{1-\lambda}\alpha_H(e_i)\alpha_H^\vee \ (0 \leq u \leq n_H-1)$.
For $1 \leq j \leq n_H-1$, we get
\[
{e}_{H,j}e_i=\sum_{u=0}^{n_H-1}\lambda^{ju}s_H^ue_i
=\sum_{u=0}^{n_H-1}\dfrac{\lambda^{(j+1)u}}{1-\lambda}\alpha_H(e_i)\alpha_H^\vee,
\]
which is equal to $\dfrac{n_H}{1-\lambda}\alpha_H(e_i)\alpha_H^\vee$ for $j=n_H-1$, 
and it is equal to $0$ for $1 \leq j \leq n_H-2$.
Hence the condition (\ref{cond2}) is equivalent to 
\[
\left(\sum_{i=1}^{N}\alpha_H(e_i)f_i\right)\otimes \alpha_H^\vee \equiv 0
\mod \langle \alpha_H \rangle^{m_Hn_H} \otimes V.
\]
Since $\partial_i(\alpha_H)=\alpha_H(e_i)$, this is equivalent to (\ref{cond1}).

Moreover, the map $\widehat \rho$ is $W$-equivariant for the natural diagonal $W$-action of $SV^* \otimes V$.
Since $w(1\otimes e_{H,i})=w\otimes (we_{H,i})=w \otimes (e_{wH,i}w)=(1 \otimes e_{wH,i})w$ for $w \in W, H \in \mathcal{A}$ and $1 \leq i \leq n_H-1$,
the space $\mathbf{Q}_m(V)$ is $\C{W}$-stable in $SV^* \otimes V$. 
Note that the diagonal $\C{W}$-action of $\mathbf{Q}_m(V)$ extends to the action of $\mathcal{H}_m$ on $\mathbf{Q}_m(V)$.
Since $\mathbf{Q}_m(V)$ is isomorphic to the standard module $M_m(\Phi(V))$ as $\mathcal{H}_m$-modules, we get that the the $\C$-span $\langle L^{(k)}\colon 1 \le k \le N \rangle$ of the generating set $L^{(k)}$
 in $\widetilde{D}_m$ is isomorphic to 
$\Phi(V)$ as $\C{W}$-modules.
\end{proof}


Freeness of $\widetilde D_m$ was established earlier  
in \cite{ST} for real $W$ and $m=1$, and it was established  in \cite{ATWaka} for real $W$ and general invariant multiplicity function $m$. Furthermore, for constant $m$ the freeness was established in \cite{HMRS} for well-generated complex reflection groups $W$ and for the infinite series $W = G(r, p, N)$.

Let us now give a few examples illustrating Theorems \ref{mth1}, \ref{thmFreeAll}, \ref{thm311}.

\begin{example}

Let $V=\C^2$ and $W=G(3,1,2)$. Let multiplicity function $m=1$. Free generators of various modules are given in the following table.
\[
\begin{array}{c||c}
\hline
\text{Basic invariants} & x_1^3+x_2^3, \quad x_1^3x_2^3 \\
\hline
\text{Basis of $Q_1^{V^*}$} & 
x_1^7-7x_1^4x_2^3, \quad  -7x_1^3x_2^4+x_2^7, \quad \ 
 x_1^{10}+5x_1^4x_2^6, \quad 5x_1^6x_2^4+x_2^{10} \ 
 \\
\hline
\text{Basis of ${\bf Q}_1(V)$} & 
x_1^3(x_1^3-4x_2^3)\otimes e_1-3x_1^2x_2^4\otimes e_2,\quad  
3x_1^4x_2^2\otimes e_1+ x_2^3(4x_1^3-x_2^3)\otimes e_2 \\
\hline
\text{Invariant basis of $D_1$} & 
x_1^4(x_1^3-7x_2^3) \partial_1 + x_2^4 (x_2^3 -7x_1^3)\partial_2, \quad  x_1^4(x_1^{6}+5x_2^6) \partial_1 + x_2^4 (5x_1^6+x_2^{6})\partial_2 \ 

\\
\hline
\text{Basis of $\widetilde{D}_1$} &  
x_1^3(x_1^3-4x_2^3)\partial_1-3x_1^2x_2^4\partial_2,\quad  
3x_1^4x_2^2\partial_1+ x_2^3(4x_1^3-x_2^3)\partial_2 \\
\hline
\end{array}
\]
\end{example}

\begin{example}
Let $V=\C^2$, and let $W$ be the dihedral group $I_2(6)$ of order $12$.
There are $6$ reflection hyperplanes: $x_1=0,x_1\pm\sqrt{3}x_2=0$ with multiplicity $m_1$, and $x_2=0$, $\sqrt{3} x_1\pm x_2=0$ with multiplicity $m_2$. For $m_1=m_2=1$, the free generators of the corresponding module $\widetilde{D}_{1,1}$ are 
\begin{eqnarray*}
&{}&L^{(1)}_{1,1} = \left(\dfrac{x_1^6}{5}-2x_1^4x_2^2+x_1^2x_2^4\right)\partial_1+\left(-\dfrac{4}{3}x_1^3x_2^3+\dfrac{4x_1x_2^5}{5}\right)\partial_2, \\
&{}&L^{(2)}_{1,1} =
\left(-\dfrac{3x_1^5x_2}{5}+x_1^3x_2^3\right)\partial_1+\left(-\dfrac{3}{4}x_1^4x_2^2+\dfrac{3x_1^2x_2^4}{2}-\dfrac{3x_2^6}{20}\right)\partial_2.
\end{eqnarray*}
Vector fields $L^{(1)}_{1,1}$, $L^{(2)}_{1,1}$ span the module isomorphic to the reflection representation $V$. 

For $m_1=2,m_1=1$, the free generators of the corresponding module  $\widetilde{D}_{2,1}$ are
\begin{eqnarray*}
&{}&L^{(1)}_{2,1}=\left(\dfrac{x_1^8x_2}{7}+\dfrac{26x_1^6x_2^3}{15}-x_1^4x_2^5\right)\partial_1+\left(
\dfrac{x_1^7x_2^2}{2}+\dfrac{x_1^5x_2^4}{2}-\dfrac{x_1^3x_2^6}{2}-\dfrac{3x_1x_2^8}{70}
\right)\partial_2, \\
&{}&L^{(2)}_{2,1} = \left(
\dfrac{x_1^9}{7}-\dfrac{22x_1^7x_2^2}{7}-x_1^5x_2^4
\right)\partial_1+\left(
-\dfrac{25x_1^6x_2^3}{6}+\dfrac{9x_1^4x_2^5}{2}-\dfrac{51x_1^2x_2^7}{14}+\dfrac{9x_2^9}{14}
\right)\partial_2. 
\end{eqnarray*}
Vector fields $L^{(1)}_{2,1}$, $L^{(2)}_{2,1}$ span the two-dimensional $I_2(6)$-module $\Phi(V)\ncong V$. In this case the KZ twist is non-trivial.\\

\end{example}


\section{K. Saito's primitive derivation and quasi-invariants}
\label{quasi-prim}

In this section we study applications of theory of logarithmic vector fields to 
quasi-invariants by using Theorem \ref{mth1}. 


Reflections $s_H$, $H \in \mathcal A$ can be represented as  $s_H (x) = x - \a_H(x) \alpha_H^\vee$, where $x \in V$ and $\alpha_H^\vee \in V$ is an eigenvector of $s_H$ with eigenvalue different from 1. Consider an arbitrary $m$-quasi-invarint  $p(x)\in Q_m$, and let $\partial_{\a_H^\vee} p(x)$ be its derivative in the direction of $\a_H^\vee$. Suppose that $m_H \ge 1$ for some $H\in \mathcal A$. Then Proposition \ref{propidemp} implies that
\begin{equation}
\label{quasiR1}
\alpha_H(x)^{m_H n_H}| (1-s_H)p(x),
\end{equation}
where we can assume that $s_H$ has order $n_H$.
Therefore
 \begin{equation}
\label{derivquasi}
\a_H(x)^{n_H-1} | \partial_{\alpha_H^\vee} p(x).  
\end{equation}
Indeed, this can be seen by considering Taylor expansion of the polynomial $p$ at a generic point on $H$ in the 
orthonormal coordinate system with respect to a $W$-invariant Hermitian inner product such that one of the coordinate vectors is $\a_H^\vee$ and the corresponding coordinate is proportional to $\a_H(x)$. 
 

Let $x_1, \ldots, x_N$ be coordinates on $V$, and let $y_1, \ldots, y_N\in (SV^*)^W$ be homogeneous basic invariants such that 
$\deg y_1 \le \cdots \le \deg y_N$. 
Let $\mathcal D=\p_{y_N}$ be
K. Saito's primitive derivation, see \cite{Sa1} and \cite{Sa2} for example. Note that different choices of basic invariants lead to proportional primitive derivations unless the highest degree of $W$ is repeated. This can happen for some non-well-generated groups $W$, in which case there is a two-dimension space of vector fields $\mathcal D$.  
Let us recall the following expression for $\mathcal D$ \cite{Y}:
 
\begin{equation*}
{\mathcal D} = J^{-1} \det \mathcal S,
\end{equation*}
where
\begin{equation*}
\mathcal S= 
\left(\begin{matrix}
\frac{\p y_1}{\p x_1} & \ldots &  \frac{\p y_1}{\p x_N} \\
\vdots&\ldots&\vdots\\
\frac{\p y_{N-1}}{\p x_1} & \ldots &  \frac{\p y_{N-1}}{\p x_N} \\
\p_{x_1} & \ldots &  \p_{x_N} \\
\end{matrix}
\right),
\end{equation*} 
and $J=\det (\frac{\p y_i}{\p x_j})_{i,j=1}^N$ is the Jacobian  
matrix. Recall that $J$ is proportional to $\prod_{H\in \mathcal A} \alpha_H(x)^{n_H-1}$ \cite{LT}.

\begin{proposition}\label{quasipreserve}
Let us suppose that $W$-invariant multiplicity function $m$ satisfies $m_H \ge 1$ for all $H\in \mathcal A$. Then $\mathcal D (Q_m) \subseteq Q_{m-1}$.
\end{proposition}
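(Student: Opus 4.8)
The plan is to work directly with the formula $\mathcal{D} = J^{-1}\det\mathcal{S}$ and check the quasi-invariance condition \eqref{quasicond} for $\mathcal{D}(p)$ at each hyperplane $H\in\A$, using the decomposition $mn = (m-1)n + n$. Fix $H\in\A$ and a generator $s_H$ of $W_H$ of order $n_H$. I want to show $\alpha_H^{(m_H-1)n_H}\mid (1-s_H)\mathcal{D}(p)$. Since $\mathcal{D}$ is a $W$-invariant vector field (it is a scalar multiple of $\partial_{y_N}$, and $y_N$ is $W$-invariant, so $\mathcal{D}$ commutes with the $W$-action on $SV^*$), we have $(1-s_H)\mathcal{D}(p) = \mathcal{D}\big((1-s_H)p\big)$. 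So the task reduces to: given that $\alpha_H^{m_H n_H}\mid (1-s_H)p$ (this is \eqref{quasicond}, which holds because $p\in Q_m$), show that $\alpha_H^{(m_H-1)n_H}\mid \mathcal{D}(q)$ where $q := (1-s_H)p$.

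The mechanism here is that $\mathcal{D}$, although it is not itself divisible by $\alpha_H$ in an obvious way, has a structured behaviour near $H$. First I would record that applying any single partial derivative $\partial_v$ (for $v\in V$) to a polynomial divisible by $\alpha_H^{k}$ yields a polynomial divisible by $\alpha_H^{k-1}$; more usefully, if in addition $v=\alpha_H^\vee$ then $\alpha_H^k\mid \partial_{\alpha_H^\vee}(\alpha_H^k \cdot g)$ is false in general but one gains from the structure of $q$: since $q = (1-s_H)p$ and $s_H^{-1}\varphi = \varphi - \varphi(\alpha_H^\vee)\alpha_H$ for $\varphi\in V^*$, the polynomial $q$ lies in $\alpha_H\cdot SV^*$ and more precisely its Taylor expansion along $H$ (in the coordinates used to justify \eqref{derivquasi}, with one coordinate proportional to $\alpha_H$ and direction $\alpha_H^\vee$) is visible. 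The key structural input I would isolate is that $\det\mathcal{S}$, expanded along its last row, is $\sum_i (\text{cofactor}_i)\,\partial_{x_i}$, where each cofactor is an $(N-1)\times(N-1)$ minor of the Jacobian-type matrix; and $J = \det(\partial y_i/\partial x_j)$ is divisible by $\alpha_H^{n_H-1}$ (by \cite{LT}). So $\mathcal{D} = J^{-1}\det\mathcal{S}$ is a vector field whose coefficients, a priori rational, are actually polynomial, and the "denominator" $\alpha_H^{n_H-1}$ in $J$ is what interacts with the $\alpha_H^{m_Hn_H}$ divisibility of $q$.

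Concretely: $\mathcal{D}(q) = J^{-1}\sum_i C_i \,\partial_{x_i}(q)$ where $C_i$ are the cofactors. I would change to the adapted orthonormal coordinate system $u_1,\dots,u_N$ with $u_1$ proportional to $\alpha_H$ and $\partial_{u_1}$ in the direction $\alpha_H^\vee$. In these coordinates $\mathcal{D} = h^{-1}(B_1\partial_{u_1} + \cdots + B_N\partial_{u_N})$ with $h$ proportional to $\prod_{H'}\alpha_{H'}^{n_{H'}-1}$, so $\alpha_H^{n_H-1}\| h$ (exact power). Using \eqref{derivquasi}-type reasoning: $\alpha_H^{m_Hn_H}\mid q$ gives $\alpha_H^{m_Hn_H-1}\mid \partial_{u_1}q$ and $\alpha_H^{m_Hn_H}\mid \partial_{u_j}q$ for $j\geq 2$ (since those derivatives are "tangential"). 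Meanwhile the cofactor $B_1$ multiplying $\partial_{u_1}$ carries an extra factor of $\alpha_H$ relative to the others: the minor obtained by deleting the first column of the Jacobian matrix omits exactly the column of $\partial y_i/\partial u_1 = \partial_{\alpha_H^\vee} y_i$, and since each $y_i$ is $W$-invariant one shows $\alpha_H^{n_H-1}\mid \partial_{\alpha_H^\vee}y_i$ (again by \eqref{derivquasi} with $m$ replaced by the trivial situation, i.e. invariants are $\infty$-quasi-invariant in the relevant sense). A bookkeeping of powers of $\alpha_H$ in $h$, in $B_i$, and in $\partial_{u_i}q$ then gives $\alpha_H^{(m_H-1)n_H}\mid \mathcal{D}(q)$: roughly, for $j\geq 2$ one has $\operatorname{ord}_{\alpha_H}(B_j) \geq n_H-1$ and $\operatorname{ord}_{\alpha_H}(\partial_{u_j}q)\geq m_Hn_H$, while $\operatorname{ord}_{\alpha_H}(h) = n_H-1$, netting $\geq m_Hn_H \geq (m_H-1)n_H$ (with room to spare), and for $j=1$ one has $\operatorname{ord}_{\alpha_H}(B_1)\geq (n_H-1) + 1 = n_H$ — wait, more carefully $B_1$ need only contribute $\operatorname{ord}\geq 0$ but then $\operatorname{ord}(\partial_{u_1}q)\geq m_Hn_H - 1$ and one needs $m_Hn_H-1-(n_H-1) = (m_H-1)n_H \geq (m_H-1)n_H$, exactly matching. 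So the $j=1$ term is the tight one and the others have slack.

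The main obstacle I expect is precisely this last divisibility count for the $j=1$ term and, underlying it, establishing that $\alpha_H^{n_H-1}$ divides $\partial_{\alpha_H^\vee}y_i$ for each basic invariant $y_i$ — this is the invariant analogue of \eqref{derivquasi} and should follow by the same Taylor-expansion argument, but it needs to be stated cleanly. A secondary subtlety is handling the case where the top degree of $W$ is repeated (the two-dimensional space of primitive vector fields mentioned before the proposition): there $\mathcal{D}$ is not canonically $J^{-1}\det\mathcal{S}$, but the argument applies to any element of that space since it only uses that $\mathcal{D}$ is a $W$-invariant vector field of the form $J^{-1}\det\mathcal{S}$ with the last row replaced appropriately, or one simply notes the claim is linear in $\mathcal{D}$ and checks it on a spanning set. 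I would also remark that once $\alpha_H^{(m_H-1)n_H}\mid (1-s_H)\mathcal{D}(p)$ is verified for every $H$, Proposition \ref{propidemp} immediately gives $\mathcal{D}(p)\in Q_{m-1}$, completing the proof.
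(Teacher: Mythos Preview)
Your approach is the paper's: use $W$-invariance to rewrite $(1-s_H)\mathcal{D}(p)=\mathcal{D}\bigl((1-s_H)p\bigr)$, then control the right-hand side via $\mathcal{D}=J^{-1}\det\mathcal{S}$ and an order count along $\alpha_H$. After your self-correction the bookkeeping at the fixed $H$ is correct, and the tight term is indeed the normal-derivative one, giving order $\ge m_Hn_H-1-(n_H-1)=(m_H-1)n_H$.

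There is, however, a real gap: you never show that $\mathcal{D}(p)$ is a \emph{polynomial}. The coefficients of $\mathcal{D}$ are \emph{not} polynomial (your parenthetical ``are actually polynomial'' is wrong; already for $W=\Z/2$ one has $\mathcal{D}=(2x)^{-1}\partial_x$), so $\mathcal{D}(p)$ is a priori rational with possible poles along every $H'\in\A$. Your order count is carried out only at the single hyperplane $H$, and only for $\mathcal{D}(q)$ with $q=(1-s_H)p$; it says nothing about regularity at $H'\neq H$, and even at $H$ regularity of $(1-s_H)\mathcal{D}(p)$ does not by itself force regularity of $\mathcal{D}(p)$. The paper handles this as an explicit first step: for any $p\in Q_m$ and any $H$, the column combination $(\partial_{\alpha_H^\vee}y_1,\ldots,\partial_{\alpha_H^\vee}y_{N-1},\partial_{\alpha_H^\vee}p)^T$ has every entry divisible by $\alpha_H^{n_H-1}$ (the last by \eqref{derivquasi}, the first $N-1$ because invariants satisfy the same condition), so $\det\mathcal{S}(p)$ is divisible by $\alpha_H^{n_H-1}$ and cancels the pole from $J^{-1}$. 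Your own cofactor analysis, applied to $p$ rather than $q$ and run at \emph{every} hyperplane, yields the same conclusion; but this step has to come first, before the quasi-invariance computation even makes sense.
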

\begin{proof}
Let $q\in Q_m$. Let us show first that $\mathcal D q$ is a polynomial. Let $H \in \mathcal A$. 
Consider the linear combination of columns of the matrix $\mathcal S$ with coefficients $\alpha_1, \ldots, \alpha_N$ such that  $\alpha_H^\vee = (\alpha_1, \ldots, \alpha_N)$. The resulting column after applying the operator to the polynomial $q$  has the form 
$(\p_{\alpha_H^\vee} y_1, \ldots, \p_{\alpha_H^\vee} y_{N-1}, \p_{\alpha_H^\vee} q)^T$. It follows from the property \eqref{derivquasi} that every coordinate of this vector is divisible by $\alpha_H(x)^{n_H-1}$ hence $\mathcal D q$ is regular generically on $H$. Therefore $\mathcal D q$ is a polynomial.

Let us show now that $\mathcal D q \in Q_{m-1}$. Consider $\widetilde q = (1 - s_H) \mathcal D q$. Since $\mathcal D$ is $W$-invariant 
$$
\widetilde q = \mathcal D \left( (1 - s_H)  q\right)  = \mathcal D \left(\alpha_H(x)^{ m_H n_H} r(x)\right)
$$
for some polynomial $r(x)$ as $q\in Q_m$. Since the last row in the matrix 
$$\mathcal S \left(\alpha_H(x)^{  m_H n_H} r(x)\right)$$ 
is divisible by $\alpha_H^{m_H n_H - 1}$  it follows that $\widetilde q \in Q_{m-1}$.
\end{proof}


Let us recall the \textit{basis matrices} for the logarithmic vector fields $D_m$. A
basis matrix $B_m$ for $D_m$ is defined by the property that
$$
[\theta_1,\ldots,\theta_N]^T:=B_m[\partial_{x_1},\ldots,\partial_{x_N}]^T
$$
form a basis for $D_m^W$ over $(SV^*)^W$, and for 
$D_m$ over $SV^*$. In the case of constant multiplicity $m$ Terao gave formulas for $B_m$ in \cite{T} (as well as formulas for the basis matrices for $\widetilde{D}_m$). Basis matrices for complex reflection groups were considered  in \cite{HMRS}. These matrices also give explicit bases for the corresponding quasi-invariants.

\begin{corollary}
Basis matrix gives a basis of $Q_{m}^{V^*}$ over $(SV^*)^W$.  
Namely, for 
$$
[\theta_1,\ldots,\theta_N]^T:=B_m[\partial_{x_1},\ldots,\partial_{x_N}]^T,
$$
the polynomials $\theta_i(x_j)\ (i,j=1,\ldots,N)$ form a basis for 
$Q_m^{V^*}$ over $(SV^*)^W$. 
%
\end{corollary}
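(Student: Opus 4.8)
The plan is to combine the defining property of the basis matrix $B_m$ with Theorem \ref{mth1} and the isotypic decomposition $\mathrm{Hom}_W(V^*,Q_m)\otimes V^*\cong Q_m^{V^*}$ recalled just before that theorem. In other words, a $(SV^*)^W$-basis of $D_m^W$ is transported first to a $(SV^*)^W$-basis of $\mathrm{Hom}_W(V^*,Q_m)$ and then, by evaluation, to a $(SV^*)^W$-basis of $Q_m^{V^*}$.

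First I would unwind the definition of $B_m$: the vector fields $\theta_1,\ldots,\theta_N$ with $[\theta_1,\ldots,\theta_N]^T=B_m[\p_{x_1},\ldots,\p_{x_N}]^T$ form, by definition, a basis of $D_m^W$ over $(SV^*)^W$. Writing $\theta_i=\sum_{j=1}^N f^{(i)}_j\p_j$ we have $f^{(i)}_j=\theta_i(x_j)$; by Proposition \ref{prop1} each $f^{(i)}_j$ lies in $Q_m$, and by Proposition \ref{prop2} the span $\langle\theta_i(x_j)\colon 1\le j\le N\rangle$ is a copy of $V^*$, so each $\theta_i(x_j)$ in fact lies in the isotypic component $Q_m^{V^*}$. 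Thus the $N^2$ polynomials $\theta_i(x_j)$ are genuine elements of $Q_m^{V^*}$, as the statement requires.

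Next I would transport the basis $\theta_1,\ldots,\theta_N$ through the inverse $\Theta^{-1}\colon D_m^W\to\mathrm{Hom}_W(V^*,Q_m)$ of Theorem \ref{mth1}, which sends $\sum_j f_j\p_j$ to the homomorphism $x_k\mapsto f_k$. This map is $(SV^*)^W$-linear, since multiplying a $W$-invariant vector field by an invariant polynomial multiplies every component by that polynomial; hence $\Theta^{-1}(\theta_1),\ldots,\Theta^{-1}(\theta_N)$ is a basis of the free $(SV^*)^W$-module $\mathrm{Hom}_W(V^*,Q_m)$. Finally I would appeal to the evaluation map: since $V^*$ is an irreducible $W$-module, $\mathrm{Hom}_W(V^*,Q_m)\otimes V^*\to Q_m$, $\varphi\otimes\xi\mapsto\varphi(\xi)$, is an injective $(SV^*)^W$-module homomorphism with image exactly $Q_m^{V^*}$, which is the identification used before Theorem \ref{mth1}. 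As $x_1,\ldots,x_N$ is a $\C$-basis of $V^*$, the family $\{\Theta^{-1}(\theta_i)\otimes x_j\}_{1\le i,j\le N}$ is a basis of $\mathrm{Hom}_W(V^*,Q_m)\otimes V^*$ over $(SV^*)^W$, and applying the evaluation isomorphism sends it to $\{\Theta^{-1}(\theta_i)(x_j)\}=\{\theta_i(x_j)\}$, which is therefore a basis of $Q_m^{V^*}$ over $(SV^*)^W$.

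The only step needing genuine care is the passage to the tensor product in the last paragraph: one must verify that tensoring a basis of the free module $\mathrm{Hom}_W(V^*,Q_m)$ over $(SV^*)^W$ with a $\C$-basis of $V^*$ yields a basis of the tensor product over $(SV^*)^W$, and that the evaluation map, being an injective $(SV^*)^W$-homomorphism onto $Q_m^{V^*}$, carries bases to bases. Both are routine linear algebra over the ground ring $(SV^*)^W$, and once granted the corollary is immediate from Theorem \ref{mth1} together with Propositions \ref{prop1} and \ref{prop2}; no new computation with multiplicities or degrees is needed.
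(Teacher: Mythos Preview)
Your argument is correct and follows essentially the same route as the paper: the paper's one-line proof simply invokes the isomorphism $Q_m^{V^*}\cong D_m^W\otimes V^*$ obtained from Theorem~\ref{mth1} together with the identification $Q_m^{V^*}\cong\mathrm{Hom}_W(V^*,Q_m)\otimes V^*$ stated just before it, and you have carefully unpacked exactly this isomorphism at the level of explicit bases. Your additional appeal to Propositions~\ref{prop1} and~\ref{prop2} to verify that the $\theta_i(x_j)$ land in $Q_m^{V^*}$ is not strictly needed once the isomorphism is in place, but it does no harm.
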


\begin{proof}
Since $Q_m^{V^*} \cong D_m^W \otimes V^*$ by Theorem \ref{mth1}, the statement follows immediately.
\end{proof}

Now let us consider the relation between $Q_m^{V^*}$ and 
$Q_{m-1}^{V^*}$ for a constant $m$. To do that, the key is K. Saito's primitive 
derivation $\mathcal D$.
Let $T:=\C[y_1,\ldots,y_{N-1}]$,
$\text{Der}_{(0)}$ be the localization of $\text{Der}$ at the ideal $(0)$ of polynomials vanishing at 0, and let 
$\nabla_{\mathcal{D}}:\text{Der}_{(0)} \rightarrow \text{Der}_{(0)}$ be the affine connection 
defined by 
$$
\nabla_{\mathcal{D}}(\sum_{i=1}^N f_i \partial_{x_i}):=
\sum_{i=1}^N \mathcal{D}(f_i) \partial_{x_i}.
$$
In \cite{Sa1}, \cite{Sa2} and \cite{ARSY}, it is shown that 
$
\nabla_{\mathcal{D}}$ gives a bijection 
$$
\nabla_{\mathcal{D}}\colon \text{Der}^W \rightarrow \text{Der}_{(SV^*)^W}.
$$
Here $\text{Der}_{(SV^*)^W}\subseteq  \text{Der}_{(0)}$ is the derivation module of the invariant subring 
$(SV^*)^W$. 
Thus for $\theta \in \text{Der}^W \subseteq  \text{Der}_{(SV^*)^W}$, we can define 
$\nabla^{-1}_{\mathcal{D}} \theta \in \text{Der}^W$.
By using this we can show the following.

\begin{theorem}[\cite{Sa1}, \cite{T05}, \cite{AT}, \cite{ARSY}]
Suppose that the group $W$ is well-generated and the multiplicty function $m$ is constant. Let $k$ be a non-negative integer. Then 
$\nabla_{\mathcal{D}}$ is a $T$-isomorphism from $D_{k+1}^W$ onto $D_{k}^W$.
Moreover, 
there exists a basis $\theta_1,\ldots,\theta_N$ for $D_0^W$ such that, for 
$\eta_i^{(k)}:=\nabla_{\mathcal{D}}^{-k} \theta_i \in D_k^W$, it holds that 
\begin{itemize}
\item[(1)]
$\eta_1^{(k)},\ldots,\eta_N^{(k)}$ are independent over $(S V^*)^W$, and 
form a basis for $D_k^W$, 
\item[(2)]
$\{\eta_i^{(k)}\}_{1 \le i \le N,\ k \ge 0}$ are independent over $T$, 
\item[(3)]
Let $\mathcal{F}_k:=\bigoplus_{i=1}^N T \cdot \eta_i^{(k)}$. Then 
$$
D_m^W=\bigoplus_{k \ge m} \mathcal{F}_k.
$$
Moreover, $\nabla_{\mathcal{D}}$ induces a $T$-isomorphism $\nabla_{\mathcal{D}}\colon 
\mathcal{F}_{k} \rightarrow \mathcal{F}_{k-1}$ for $ k\ge 1$, which 
induces 
the $T$-isomorphism 
$$
\nabla_{\mathcal{D}}\colon D_m^W \rightarrow D_{m-1}^W.
$$
\end{itemize}
\label{Hodge}
\end{theorem}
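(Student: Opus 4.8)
The plan is to first upgrade the bijectivity of $\nabla_{\mathcal D}$ on $\Der^W$ recalled above to an isomorphism $D_{k+1}^W\xrightarrow{\sim}D_k^W$, and then to produce the filtration $D_m^W=\bigoplus_{k\ge m}\mathcal F_k$ by iterating this isomorphism downward from a base decomposition. First I would observe that $\nabla_{\mathcal D}$ maps $D_{k+1}^W$ into $D_k^W$: by Proposition~\ref{prop1} a $W$-invariant field $\sum_i f_i\p_i$ lies in $D_m^W$ exactly when each $f_i\in Q_m$, and since $\nabla_{\mathcal D}(\sum_i f_i\p_i)=\sum_i\mathcal D(f_i)\p_i$ while $\mathcal D(Q_{k+1})\subseteq Q_k$ by Proposition~\ref{quasipreserve} (valid as the constant value $k+1\ge 1$, and which also records that each $\mathcal D(f_i)$ is a polynomial), the inclusion follows. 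One also checks that $\nabla_{\mathcal D}$ is $T$-linear for $T=\C[y_1,\dots,y_{N-1}]$, since $\mathcal D(f)=\p_{y_N}f=0$ for $f\in T$, whence $\nabla_{\mathcal D}(f\theta)=f\nabla_{\mathcal D}(\theta)$.

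Next I would show $\nabla_{\mathcal D}\colon D_{k+1}^W\to D_k^W$ is bijective by comparing Hilbert series. Injectivity is inherited from the injectivity of $\nabla_{\mathcal D}$ on $\Der^W=D_0^W\supseteq D_{k+1}^W$. By Theorem~\ref{mth1} and the freeness of $Q_m$ over $(SV^*)^W$, each $D_m^W$ is a free $(SV^*)^W$-module of rank $N$, and by \eqref{degrees} with Proposition~\ref{bbcor} its exponents are $d_i(m)=\tfrac1N\sum_H n_H m_H+b_i$, which for constant $m$ equals $mh+b_i$ with $h:=\tfrac1N\sum_H n_H$. For well-generated $W$ the Coxeter number $h$ also equals the top degree $d_N=\deg y_N$. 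Hence the exponents of $D_{k+1}^W$ are those of $D_k^W$ raised uniformly by $h$, so $P(D_{k+1}^W,t)=t^{h}\,P(D_k^W,t)$; since $\nabla_{\mathcal D}$ lowers degrees by exactly $h$, in every degree it is an injection between spaces of equal dimension, hence onto. This proves the first assertion: $\nabla_{\mathcal D}\colon D_{k+1}^W\xrightarrow{\sim}D_k^W$ is a $T$-isomorphism.

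For (1)--(3) I would proceed by induction, feeding in Saito's flat structure. The chain $D_0^W=\Der^W\supseteq D_1^W\supseteq D_2^W\supseteq\cdots$ satisfies $\bigcap_m D_m^W=0$, since $D_m^W$ is concentrated in degrees $\ge mh+\min_i b_i$, which tends to $\infty$ with $m$. The input from \cite{Sa1}, \cite{Sa2}, \cite{T05}, \cite{AT}, \cite{ARSY} is a distinguished ``flat'' $(SV^*)^W$-basis $\theta_1,\dots,\theta_N$ of $D_0^W$ (of degrees $b_i$) whose $T$-span $\mathcal F_0:=\bigoplus_i T\theta_i$ is a complement of $D_1^W$ in $D_0^W$. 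Put $\eta_i^{(k)}:=\nabla_{\mathcal D}^{-k}\theta_i$ and $\mathcal F_k:=\nabla_{\mathcal D}^{-k}\mathcal F_0=\bigoplus_i T\eta_i^{(k)}$, a $T$-submodule of $D_k^W$ by the previous steps. Applying the $T$-isomorphism $\nabla_{\mathcal D}^{-1}$ to $D_k^W=\mathcal F_k\oplus D_{k+1}^W$ and using $\nabla_{\mathcal D}^{-1}(D_k^W)=D_{k+1}^W$ gives $D_{k+1}^W=\mathcal F_{k+1}\oplus D_{k+2}^W$; iterating from $k=0$ and invoking $\bigcap_m D_m^W=0$ yields $D_m^W=\bigoplus_{k\ge m}\mathcal F_k$, which is (3), with (2) the resulting $T$-independence. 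For (1), the fields $\eta_1^{(k)},\dots,\eta_N^{(k)}$ have degrees $kh+b_i$, exactly the exponents of the free module $D_k^W$, and are independent over $(SV^*)^W$, so they form an $(SV^*)^W$-basis of $D_k^W$; this is consistent with $P\bigl(\bigoplus_i(SV^*)^W\eta_i^{(k)},t\bigr)=\sum_{j\ge k}P(\mathcal F_j,t)=P(D_k^W,t)$.

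The step I expect to be the main obstacle is the base decomposition $D_0^W=\mathcal F_0\oplus D_1^W$ together with the compatibility needed for (1): this is precisely Saito's theorem on the flat (Frobenius) structure of the orbit space $V/W$, and it is exactly here that well-generatedness enters essentially---both so that the primitive derivation $\mathcal D=\p_{y_N}$ is well defined up to scalar and so that $h=d_N$, which makes the degree bookkeeping in the first two steps close up. Everything else is formal once the bijectivity of $\nabla_{\mathcal D}$ on $\Der^W$ and Propositions~\ref{prop1}, \ref{quasipreserve} are in hand.
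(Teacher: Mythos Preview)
The paper does not prove Theorem~\ref{Hodge}: it is stated as a quoted result from \cite{Sa1}, \cite{T05}, \cite{AT}, \cite{ARSY}, with no proof given in the text. So there is no ``paper's own proof'' to compare against; the theorem functions purely as an imported tool used later in Corollary~\ref{corDisom-wellgen}.

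That said, your sketch is a plausible reconstruction and correctly identifies the architecture. A few remarks. Your use of Proposition~\ref{prop1} and Proposition~\ref{quasipreserve} to get $\nabla_{\mathcal D}(D_{k+1}^W)\subseteq D_k^W$ is a nice way to exploit the paper's own machinery, and the Hilbert series argument for bijectivity (using $h=d_N$ for well-generated $W$ and the exponent shift from \eqref{degrees}) is clean. You are also right that the substantive content lives in the base splitting $D_0^W=\mathcal F_0\oplus D_1^W$, which is Saito's flat structure and is precisely what the cited references supply. The one place where your argument is thin is part~(1): having $N$ elements of the correct degrees inside a free module of rank $N$ does not by itself give $(SV^*)^W$-independence, and the $T$-independence from (2)--(3) does not automatically upgrade since $(SV^*)^W=T[y_N]$. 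In the references this is handled either via Saito's criterion applied to the coefficient matrix of $\eta_i^{(k)}$, or by showing that $y_N\cdot\mathcal F_k\subseteq \mathcal F_{k+1}\oplus\cdots$ so that the $T[y_N]$-span of $\{\eta_i^{(k)}\}_i$ recovers all of $\bigoplus_{j\ge k}\mathcal F_j$; you would need one of these to close the gap.
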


\begin{corollary}\label{corDisom-wellgen}
Suppose that the group $W$ is 
well-generated and the multiplicity function $m$ is a non-zero constant. Then  
${\mathcal D}\colon  Q_m^{V^*} \to Q_{m-1}^{V^*}$ is an isomorphism of vector spaces. 
\end{corollary}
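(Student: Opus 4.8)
The plan is to use Theorem \ref{mth1} to transport the question to the module of logarithmic vector fields, where it becomes a direct consequence of Theorem \ref{Hodge}.

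First I would check that $\mathcal D$ indeed restricts to a linear map $Q_m^{V^*}\to Q_{m-1}^{V^*}$. Since $m$ is a non-zero constant, $m_H\ge 1$ for every $H\in\mathcal A$, so Proposition \ref{quasipreserve} gives $\mathcal D(Q_m)\subseteq Q_{m-1}$; and since $\mathcal D$ is a $W$-invariant derivation it commutes with the $W$-action on $SV^*$ and therefore preserves isotypic components, so $\mathcal D(Q_m^{V^*})\subseteq Q_{m-1}^{V^*}$. It then remains to prove that this restriction is bijective.

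Next I would pass through the isomorphisms of Theorem \ref{mth1}. Using the decomposition $Q_m^{V^*}\cong\text{Hom}_W(V^*,Q_m)\otimes V^*$ (and its analogue for $m-1$), the operator $\mathcal D$ on $Q_m^{V^*}$ is identified with $\widehat{\mathcal D}\otimes\mathrm{id}_{V^*}$, where $\widehat{\mathcal D}\colon\text{Hom}_W(V^*,Q_m)\to\text{Hom}_W(V^*,Q_{m-1})$ is post-composition with $\mathcal D$, i.e. $\widehat{\mathcal D}(\varphi)=\mathcal D\circ\varphi$ (well defined by Proposition \ref{quasipreserve}). Thus $\mathcal D$ is an isomorphism on the $V^*$-isotypic parts if and only if $\widehat{\mathcal D}$ is an isomorphism. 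Now the isomorphisms $\Theta$ of Theorem \ref{mth1}, given for $m$ and for $m-1$ by $\varphi\mapsto\sum_{i=1}^N\varphi(x_i)\partial_i$, intertwine $\widehat{\mathcal D}$ with Saito's connection $\nabla_{\mathcal D}$: directly from the definition of $\nabla_{\mathcal D}$,
\[
\Theta\bigl(\widehat{\mathcal D}\varphi\bigr)=\sum_{i=1}^N\mathcal D\bigl(\varphi(x_i)\bigr)\partial_i=\nabla_{\mathcal D}\bigl(\Theta\varphi\bigr).
\]
Since $W$ is well-generated and $m$ is a non-zero constant, Theorem \ref{Hodge} applies and states that $\nabla_{\mathcal D}\colon D_m^W\to D_{m-1}^W$ is an isomorphism; transporting this back along $\Theta$ shows that $\widehat{\mathcal D}$, and hence $\mathcal D\colon Q_m^{V^*}\to Q_{m-1}^{V^*}$, is an isomorphism of vector spaces.

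Almost all of the substance is already packaged in Proposition \ref{quasipreserve} and Theorem \ref{Hodge}, so I expect no genuine difficulty; the main point requiring care is the compatibility in the previous paragraph, namely that under the identifications of Theorem \ref{mth1} the operator $\mathcal D$ on quasi-invariants goes over to $\nabla_{\mathcal D}$ on vector fields (the displayed identity) and that $\mathcal D$ acts on $Q_m^{V^*}$ as $\widehat{\mathcal D}\otimes\mathrm{id}_{V^*}$ under the isotypic decomposition, so that bijectivity of $\widehat{\mathcal D}$ propagates to $\mathcal D$. Both hypotheses are used precisely where Theorem \ref{Hodge} is invoked: well-generatedness is its assumption, while $m$ being a non-zero constant ensures $m-1\ge 0$ so that $D_{m-1}^W$ is defined and Proposition \ref{quasipreserve} applies.
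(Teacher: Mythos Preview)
Your proof is correct and follows essentially the same route as the paper: both transport $\mathcal D$ on $Q_m^{V^*}$ to $\nabla_{\mathcal D}$ on $D_m^W$ via Theorem~\ref{mth1} and then invoke Theorem~\ref{Hodge}. The paper makes the correspondence concrete by writing $Q_m^{V^*}\cong\bigoplus_{k\ge m}\bigoplus_{i,j} T\cdot\eta_i^{(k)}(x_j)$ and using $\mathcal D(\eta_i^{(k)}(x_j))=\eta_i^{(k-1)}(x_j)$, whereas you package the same computation as the intertwining identity $\Theta\circ\widehat{\mathcal D}=\nabla_{\mathcal D}\circ\Theta$ together with the isotypic factorisation $\mathcal D\leftrightarrow\widehat{\mathcal D}\otimes\mathrm{id}_{V^*}$; the content is identical.
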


\begin{proof}
By Theorems \ref{mth1} and \ref{Hodge}, 
\begin{eqnarray*}
Q_m^{V^*} &\cong& D_m^W \otimes V^*\\
&\cong& (\bigoplus_{k \ge m} \mathcal{F}_k ) \otimes V^*\\
&\cong& \bigoplus_{k \ge m} (\mathcal{F}_k \otimes V^*)\\
&\cong& \bigoplus_{k \ge m} \bigoplus_{i,j=1}^N T \cdot \eta_{i}^{k}(x_j).
\end{eqnarray*}
Since $\mathcal{D}$ is $T$-linear, and $\mathcal{D}(\eta_i^{(k)}(x_j))=
\eta_i^{(k-1)}(x_j)$ by Theorem \ref{Hodge},  
we complete the proof. 
\end{proof}
Finally, we return to the case of general non-constant invariant multiplicity function for Coxeter groups.

\begin{theorem}
Assume that $W$ is an (irreducible) Coxeter group with multiplicity function $m\ge 1$.  Then 
${\mathcal D}\colon Q_m^{V^*} \to Q_{m-1}^{V^*}$ is a $T$-isomorphism.  
\end{theorem}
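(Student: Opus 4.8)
The plan is to transfer the statement, via Theorem~\ref{mth1}, to the module of $W$-invariant logarithmic vector fields, where the primitive derivation appears as the affine connection $\nabla_{\mathcal D}$. First I would reduce the theorem to showing that $\nabla_{\mathcal D}\colon D_m^W\to D_{m-1}^W$ is a $T$-isomorphism. By \eqref{isom1} there are $(SV^*)^W\otimes\C W$-module isomorphisms $Q_m^{V^*}\cong D_m^W\otimes V^*$ and $Q_{m-1}^{V^*}\cong D_{m-1}^W\otimes V^*$; since $\mathcal D$ is a $W$-invariant derivation it preserves $V^*$-isotypic components, and by Proposition~\ref{quasipreserve} it sends $Q_m^{V^*}$ into $Q_{m-1}^{V^*}$. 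Tracing through the isomorphism $\Theta$ of Theorem~\ref{mth1} one sees that $\mathcal D$ corresponds to $\nabla_{\mathcal D}\otimes\mathrm{id}_{V^*}$: if $L=\sum_i f_i\partial_i\in D_m^W$ then $\Theta^{-1}(L)=\varphi$ with $\varphi(x_i)=f_i$, the action of $\mathcal D$ on $\mathrm{Hom}_W(V^*,Q_m)$ is post-composition $\varphi\mapsto\mathcal D\circ\varphi$, and $\Theta(\mathcal D\circ\varphi)=\sum_i\mathcal D(f_i)\partial_i=\nabla_{\mathcal D}(L)$.

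The routine parts are then as follows. The map $\nabla_{\mathcal D}$ is $T$-linear because $\mathcal D(y_i)=\partial_{y_N}y_i=0$ for $i<N$; it lands in $D_{m-1}^W$ by Propositions~\ref{quasipreserve} and~\ref{prop1} (the map $\psi\mapsto(\nabla_{\mathcal D}L)(\psi)=\mathcal D(L(\psi))$ on $V^*$ is $W$-equivariant, so $\nabla_{\mathcal D}L\in\Der^W$, and its components lie in $Q_{m-1}$); and it is injective because $\nabla_{\mathcal D}$ is already injective on $\Der^W$, which contains $D_m^W$. For surjectivity I would take $\eta\in D_{m-1}^W$; since $\eta\in\Der^W\subseteq\Der_{(SV^*)^W}$ and $\nabla_{\mathcal D}\colon\Der^W\to\Der_{(SV^*)^W}$ is a bijection, there is a unique $\theta=\sum_i f_i\partial_i\in\Der^W$ with $\nabla_{\mathcal D}\theta=\eta$, and it remains to show $\theta\in D_m^W$. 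By Proposition~\ref{prop1} this is equivalent to $\alpha_H^{2m_H}\mid(1-s_H)f_i$ for all $H\in\A$ and all $i$ (recall $n_H=2$ for Coxeter groups), and what we know is that $\mathcal D(f_i)=\eta(x_i)$ lies in $Q_{m-1}$, so $\mathcal D\big((1-s_H)f_i\big)=(1-s_H)\mathcal D(f_i)$ is a polynomial divisible by $\alpha_H^{2(m_H-1)}$.

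The hard part will be exactly this implication: if $f\in SV^*$ has $\mathcal D(f)\in Q_{m-1}$ (so in particular $\mathcal D(f)$ is a polynomial), then $f\in Q_m$. I would prove it by a local computation at a generic point $p$ of each reflection hyperplane $H$. In coordinates $z_1=\alpha_H,z_2,\dots,z_N$ with $s_H=\mathrm{diag}(-1,1,\dots,1)$, since $n_H=2$ the Jacobian $J$ is proportional to $\prod_{H'\in\A}\alpha_{H'}=z_1\cdot(\text{unit at }p)$, so the primitive vector field $\mathcal D=J^{-1}\det\mathcal S$ has at most a simple pole along $H$, and $s_H$-invariance forces $\mathcal D=\tfrac{c(z)}{z_1}\partial_{z_1}+\sum_{j\ge 2}b_j(z)\partial_{z_j}$ with $c,b_j$ holomorphic at $p$ and even in $z_1$. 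The crucial input --- a transversality property of Saito's primitive vector field relative to the discriminant --- is that the residue $c$ does not vanish identically on $H$; equivalently $\alpha_H\nmid\det(\nabla y_1,\dots,\nabla y_{N-1},\alpha_H)$, i.e. $y_1|_H,\dots,y_{N-1}|_H$ are algebraically independent on $H$ (by $W$-symmetry it suffices to check one $H$ per $W$-orbit). Granting this, writing $g=(1-s_H)f=z_1q$ with $q$ even in $z_1$, one computes $\mathcal D(g)=\tfrac{cq}{z_1}+c\,\partial_{z_1}q+z_1\sum_{j\ge2}b_j\partial_{z_j}q$, and since $c$ is a unit at $p$ an inductive comparison of the $z_1$-expansions shows that $\mathcal D(g)$ being a polynomial divisible by $\alpha_H^{2(m_H-1)}$ forces $\alpha_H^{2m_H}\mid g$, that is $f\in Q_m$. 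Applying this to each component $f_i$ of $\theta$ gives $\theta\in D_m^W$, which completes the proof. (Reversing a divisibility after applying a derivation fails for a general derivation; it works here precisely because of the nonvanishing residue of $\mathcal D$ along each hyperplane. An alternative for the surjectivity step would be to match Hilbert series over $T$, using freeness of $D_m^W$ and $D_{m-1}^W$ and the degree formula \eqref{degrees} together with $\mathrm{kz}_m\circ\mathrm{kz}_{m'}=\mathrm{kz}_{m+m'}$, but this requires knowing that $\Phi_m(V)={\rm kz}_{-m}(V)$ and $\Phi_{m-1}(V)$ appear in the coinvariants in the same degrees, so the local computation seems more self-contained.)
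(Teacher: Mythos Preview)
Your approach is correct but genuinely different from the paper's. The paper argues case by case: for constant $m$ it invokes Corollary~\ref{corDisom-wellgen} (i.e.\ Theorem~\ref{Hodge}); for non-constant $m$ with $W$ of type $B_N$ or $F_4$ it cites \cite{ATWaka}; and for the remaining dihedral case $I_2(2\ell)$ with $m_1>m_2\ge 1$ it performs an explicit calculation in complex coordinates $z,\bar z$, using the closed-form quasi-invariants $q_i^{(m)},p_i^{(m)}$ from \cite{Fdih} and checking directly that $\mathcal D$ sends them to nonzero multiples of $q_i^{(m-1)},p_i^{(m-1)}$ modulo lower terms. Your argument, by contrast, is uniform: transfer via $\Theta$ to $\nabla_{\mathcal D}\colon D_m^W\to D_{m-1}^W$, get injectivity and a preimage $\theta\in\Der^W$ from the bijection $\nabla_{\mathcal D}\colon\Der^W\to\Der_{(SV^*)^W}$, and then force $\theta\in D_m^W$ by a local $z_1$-expansion at each $H$ using the simple pole of $\mathcal D$.

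Your local computation is sound: with $g=(1-s_H)f=z_1q$ ($q$ even) and $\mathcal D=\tfrac{c}{z_1}\partial_{z_1}+\sum_{j\ge2}b_j\partial_{z_j}$ ($c,b_j$ even, $c$ a unit at a generic point of $H$), the lowest $z_1$-term of $\mathcal D(g)$ is $(1+2k_0)c_0q_{2k_0}z_1^{2k_0-1}$, so $z_1^{2m_H-1}\mid\mathcal D(g)$ forces $k_0\ge m_H$, hence $\alpha_H^{2m_H}\mid g$. The only point you leave at the level of a sketch is the transversality input $c_0\not\equiv 0$. This is indeed a known property of Saito's primitive derivation for irreducible Coxeter groups (equivalently, the discriminant is $y_N$-simple, or $y_1|_H,\dots,y_{N-1}|_H$ are algebraically independent). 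You can also deduce it internally from the constant-multiplicity case of Theorem~\ref{Hodge}: if $\mathcal D$ were regular along some $H$, then being $s_H$-invariant it would be tangent to $H$, and for any $\eta\in D_1^W$ one would get $\alpha_H^2\mid(1-s_H)(\nabla_{\mathcal D}\eta)(x_i)$, so $\nabla_{\mathcal D}(D_1^W)$ could not hit, say, the Euler field in $D_0^W=\Der^W$, contradicting surjectivity. With that filled in, your proof is complete and has the advantage of avoiding both the case split and the explicit dihedral calculation; the paper's route, on the other hand, is more hands-on and does not need to isolate the pole structure of $\mathcal D$.
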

\begin{proof}
The cases of $W$ of type $B_N$ and $F_4$ follow from Theorem \ref{mth1} and \cite{ATWaka}. By Corollary~\ref{corDisom-wellgen} it is enough to consider the dihedral case $W$ of type $I_2(2\ell)$, $\ell \ge 2$ with the values of multiplicity function  $m_1 > m_2 \ge 1$. 
Let $|m|=m_1+m_2$. 
Let us assume that the dihedral configuration has the defining equation $z^{2 \ell} = \bar z^{2 \ell}$ in complex coordinates $z, \bar z$. 

The basic invariants then have the form $y_1 = z \bar z$, $y_2 = \frac{1}{2\ell}(z^{2 \ell}+ \bar z^{2 \ell})$, and the Saito primitive derivative is 
\begin{equation}
\label{di1}
\mathcal D = \partial_{y_2} = \frac{1}{z^{2\ell}-\bar z^{2 \ell}} (z \partial_z - \bar z \partial_{\bar z}).
\end{equation}
It is shown in \cite{Fdih} that there exist quasi-invariants $q_i^{(m)}, \, p_i^{(m)} \in Q_m$ of the form 
\begin{equation}
\label{di2}
q_i^{(m)} = \sum_{s=0}^{|m|} a_s z^{(|m|-s)\ell + i} \bar z^{\ell s}, \quad a_0=1, \quad 
 p_i^{(m)} = \bar q_i^{(m)}, \quad 1\le i \le 2\ell -1, i \ne \ell,
\end{equation}
where $a_s \in \C$. 
Moreover, such quasi-invariants are unique, that is 
if there is $f \in Q_m$ such that 
$f=\sum_{s=0}^{|m|} b_s z^{(|m|-s)\ell + i} \bar z^{\ell s}$ for $b_s \in \C,\ b_0 =1$, then 
$f=q_i^{(m)}$, and the uniqueness of $p_i^{(m)}$ is analogous. 
Also, these quasi-invariants form a basis over $\C[y_1, y_2]$ of the isotypic component of all the two-dimensional irreducible representations \cite{Fdih}. It is easy to see that in the case of even $|m|$ polynomials \eqref{di2} give a basis   of the isotypic component of $Q_m^V = Q_m^{V^*}$  for $i=1, 2\ell-1$, and in the case of odd $|m|$ the basis is given when $i = \ell+1, \ell-1$. Let us assume that $i$ takes these values, 
that is $i \in I$, where $I=\{1, 2 \ell -1\}$ if $|m|$ is odd and $I=\{\ell+1, \ell -1\}$ if $|m|$ is even.

Since $\mathcal D(Q_m^V)\subseteq Q_{m-1}^V$ by Proposition \ref{quasipreserve}, it follows from formulas \eqref{di1}, \eqref{di2}  that
$$
{\mathcal D}(q_i^{(m)}) = (|m|\ell+i) q_i^{(m-1)}, \quad {\mathcal D}(p_i^{(m)}) = (|m|\ell+i) p_i^{(m-1)}.
$$
Therefore ${\mathcal D}(Q_m^V)$ contains the linear subspace $U\subseteq Q_{m-1}^V$ formed by all the linear combinations $\sum_{i \in I} (\sigma_i q_i^{(m-1)} + \tau_i p_i^{(m-1)})$, where $\sigma_i, \tau_i \in \C[y_1]$. 

Note that uniqueness of quasi-invariants $q_i^{(m-1)}$ of the form \eqref{di2} and the property that $q \in Q_m$ if and only if $z\bar z q \in Q_m$ implies that
$$
q_i^{(m)} = 2 \ell y_2 q_i^{(m-1)}+ c_1 (z \bar z)^\ell q_i^{(m-1)} + c_2 (z \bar z)^i p_{2\ell -i}^{(m-1)}
$$
for suitable $c_1, c_2 \in \C$. Hence 
$
q_i^{(m)} -2\ell y_2 q_i^{(m-1)},
p_i^{(m)} - 2 \ell y_2 p_i^{(m-1)}
\in U$.
Therefore
for any $n\in \N$
$$
{\mathcal D}(y_2^n q_i^{(m)}) = (2 n \ell+|m|\ell +i) y_2^{n} q_i^{(m-1)} +u,\quad 
{\mathcal D}(y_2^n p_i^{(m)}) = (2 n \ell +|m|\ell +i) y_2^{n} p_i^{(m-1)} +v, 
$$
where $u, v \in y_2^{n-1} U$.
It follows by induction in $n$ that the map $\mathcal D$ is surjective.

Note that $\mathcal D$ maps the linear span of $q_i^{(m)}$ and $p_i^{(m)}$ over $\C[y_1, y_2]$, $i \in I$,  to the linear span of $q_i^{(m-1)}$ and $p_i^{(m-1)}$ over $\C[y_1, y_2]$. Since $\mathcal D$ is homogeneous and the dimensions of the  corresponding graded components of these vector spaces are equal we get that $\mathcal D$ is an isomorphism.
\end{proof}
\section{Affine version}
\label{affine}

Let now $W$ be an irreducible finite Weyl group and $V\cong \C^N$ its reflection representation. Let $\mathcal R$ be a reduced root system with the Weyl group $W$.
Denote by $(\cdot, \cdot)$ the standard 
bilinear scalar product in $V$, so that reflections $s_\a$, $\a \in \mathcal R$ act by orthogonal transformations $s_\a (x) = x - (\a, x) \alpha^\vee$, where $x \in V$ and $\alpha^\vee = \frac{2 \alpha}{(\alpha, \alpha)}$. Let $m\colon {\mathcal R} \to \N$ be $W$-invariant multiplicity, and denote $m_\alpha = m(\alpha)$.

Consider the following ring $Q_m^{tr}$ of quasi-invariants defined by difference conditions. These rings appear as rings of quantum integrals of trigonometric Calogero--Moser systems \cite{CV}. A polynomial $p(x)\in Q_m^{tr}$ if and only if for any $\alpha \in \mathcal R$ one has
\begin{equation}
\label{trquasi}
p(x+\frac12 j \alpha^\vee) = p(x- \frac12 j \alpha^\vee)
\end{equation}
for any $x \in V$ such that $(\alpha, x)=0$ and $j = 1, \ldots, m_\alpha$. Note that $Q_m^{tr}$ is an $(SV^*)^W\otimes W$-module.

Consider also the Catalan arrangement 
$$
Cat =
\{\Pi_{\alpha, j}\colon \alpha\in {\mathcal R}_+, -m_\alpha\le j \le m_\alpha\},
$$
where 
$
\Pi_{\alpha, j}=\{x\in V\colon (\alpha,x) = j\}
$ and ${\mathcal R}_+$ is a positive half of the root system $\mathcal R$.
Let $D(Cat)$ be the corresponding module of logarithmic vector fields:
$$
D(Cat) = \{L \in \Der\colon ((\alpha,x) -j)| L((\alpha,x)),  \text { for any } \,  \alpha \in {\mathcal R}, -m_\alpha\le j \le m_\alpha\}.
$$

\begin{proposition}
Let $L = \sum_{i=1}^N a_i\partial_i \in \Der^W$. Then $L \in D(Cat) $ if and only if $a_i \in Q_m^{tr}$ for all   $i$.
\end{proposition}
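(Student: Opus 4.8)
The plan is to mimic the proof of Proposition~\ref{prop1}, replacing the linear divisibility condition at reflection hyperplanes by the difference conditions \eqref{trquasi} at the shifted hyperplanes $\Pi_{\alpha,j}$. First I would fix a root $\alpha \in \mathcal R$ and a translate $\Pi_{\alpha,j}$ with $1 \le j \le m_\alpha$ (the case $-m_\alpha \le j \le -1$ being symmetric and $j=0$ being the reflection hyperplane, already handled). The idea is to transport the divisibility statement along the affine reflection $r_{\alpha,j}\colon x \mapsto s_\alpha(x) + j\alpha^\vee$ fixing $\Pi_{\alpha,j}$. Using $W$-invariance of $L$, I would compute $L((\alpha,x) - j)$ and relate $r_{\alpha,j}^{-1}(L(\xi)) - L(\xi)$ to $L((\alpha,x))$ for $\xi \in V^*$, exactly as in Proposition~\ref{prop1}: since $r_{\alpha,j}$ acts on affine functions by sending the coordinate $(\alpha,x)-j$ to $-((\alpha,x)-j)$ while fixing transverse coordinates, one gets that $((\alpha,x)-j) \mid L((\alpha,x))$ is equivalent to each component $a_i$ satisfying the corresponding vanishing/symmetry condition on $\Pi_{\alpha,j}$.

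Second, I would translate the condition $((\alpha,x)-j)\mid L((\alpha,x))$ for all $j$ with $|j| \le m_\alpha$ into the difference condition defining $Q_m^{tr}$. The cleanest route: a $W$-invariant $L$ is of the form $\sum a_i \partial_i$ with the $a_i$ transforming under $W$ like coordinates, and the statement $L \in D(Cat)$ says $L((\alpha,x)) = (\alpha, \sum a_i e_i)$ is divisible by $\prod_{-m_\alpha \le j \le m_\alpha}((\alpha,x)-j)$. I would argue that, upon restricting to a line in the $\alpha^\vee$-direction through a generic point of the hyperplane $(\alpha,x)=0$, divisibility by all factors $(\alpha,x)-j$, $j=1,\dots,m_\alpha$, together with the already-available reflection symmetry (the $j=0$ relation plus $W$-invariance), is equivalent to the symmetry $a_i(x+\tfrac12 j\alpha^\vee)=a_i(x-\tfrac12 j\alpha^\vee)$ on $(\alpha,x)=0$ for $j=1,\dots,m_\alpha$. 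Here one uses that $\partial_{\alpha^\vee} a_i = \sum_k (\alpha^\vee)_k \partial_k a_i$ picks out the relevant directional derivative and that $L((\alpha,x)) = \sum_i (\alpha)_i a_i$ is, up to a scalar, $\partial_{\alpha^\vee}$ applied to a potential, or more directly that the vanishing of $L((\alpha,x))$ along $\Pi_{\alpha,j}$ records precisely the coincidence of values of the $a_i$ at the two mirror-image points. Combining over all positive roots $\alpha$ and all $j$ gives $a_i \in Q_m^{tr}$; conversely, if all $a_i \in Q_m^{tr}$ then reversing these implications yields $L \in D(Cat)$.

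The main obstacle I anticipate is the bookkeeping in the second step: matching the single polynomial divisibility $L((\alpha,x)) \in \prod_j ((\alpha,x)-j)\, SV^*$ with the family of pointwise difference identities on the codimension-one subspace $(\alpha,x)=0$ indexed by $j=1,\dots,m_\alpha$, and making sure the $W$-equivariant interplay between the components $a_i$ is used correctly (so that knowing the symmetry of the single function $\sum_i(\alpha)_i a_i$ suffices to deduce it for each $a_i$, via applying $W$ as in Lemma~\ref{onecompall}). I would handle this by working in coordinates adapted to $\alpha$: split $V^* = \C\beta \oplus \beta^\perp$ with $\beta$ proportional to $(\alpha,\cdot)$, write the restriction of $L$ to an $\alpha^\vee$-line, and observe that $W$-invariance forces the component functions along this line to be governed by a single generating function whose symmetry about each point $x$ with $(\alpha,x)=0$ is exactly recorded by the divisibility of $L((\alpha,x))$ by $((\alpha,x)-j)$; then transfer back. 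Everything else — the affine-reflection computation in the first step and the final assembly over all roots — is a routine adaptation of the arguments already given for $D_m^W$ in Section~\ref{IsomFree}.
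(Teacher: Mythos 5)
Your plan contains the paper's proof, but one of the two mechanisms you float in the first step would fail and should be dropped: you cannot ``transport the divisibility along the affine reflection $r_{\alpha,j}$'' the way Proposition \ref{prop1} uses $s_H$, because $L$ is only invariant under the \emph{linear} group $W$, not under the affine reflections fixing $\Pi_{\alpha,j}$ for $j\neq 0$, so the identity $r_{\alpha,j}^{-1}(L(\xi))=L(r_{\alpha,j}^{-1}\xi)$ you would need is simply unavailable. The alternative you gesture at in your second paragraph (``the vanishing of $L((\alpha,x))$ along $\Pi_{\alpha,j}$ records precisely the coincidence of values of the $a_i$ at the two mirror-image points'') is exactly the paper's argument, and once written out it also dissolves the ``bookkeeping'' obstacle you anticipate. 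Concretely: $W$-invariance under the linear reflection $s_\alpha$ gives $a(s_\alpha x)=s_\alpha(a(x))$ for the vector of components $a=(a_1,\dots,a_N)$, and since $(1-s_\alpha)v=(\alpha,v)\alpha^\vee$ one gets the pointwise identity
\[
a_i(x)-a_i(s_\alpha x)=\bigl((1-s_\alpha)a(x)\bigr)_i=(\alpha^\vee)_i\sum_{k}\alpha_k a_k(x)=(\alpha^\vee)_i\,L\bigl((\alpha,x)\bigr).
\]
Writing $x=\tilde x+\tfrac12 j\alpha^\vee$ with $(\alpha,\tilde x)=0$, so that $(\alpha,x)=j$ and $s_\alpha x=\tilde x-\tfrac12 j\alpha^\vee$, this says that vanishing of the single polynomial $L((\alpha,x))$ on $\Pi_{\alpha,j}$ is \emph{equivalent} (as $\alpha^\vee\neq 0$) to the difference condition \eqref{trquasi} for every component $a_i$; no restriction to a line, no generating function, and no appeal to Lemma \ref{onecompall} is needed. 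The peripheral points you make are correct and are also noted in the paper: the $j=0$ factor is automatic for $W$-invariant $L$ (take $j=0$ above), and negative $j$ come for free since $s_\alpha$ maps $\Pi_{\alpha,j}$ to $\Pi_{\alpha,-j}$ and $L((\alpha,\cdot))\circ s_\alpha=-L((\alpha,\cdot))$.
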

\begin{proof}
Let $L\in D(Cat)$. Then $\sum \alpha_i a_i=0$ if $(\alpha, x)= j$, $1\le j \le m_\alpha$;  $\alpha= (\alpha_1, \ldots, \alpha_N)$. Hence for such $x$ we have $(1-s_\alpha)(a)=0$, where $a=(a_1, \ldots, a_N)$ and $s_\alpha$ acts  on vector $a$ for fixed $x$. Let us denote the resulting vector as $b=(1-s_\alpha)(a)$, $b=(b_1, \ldots, b_N)$. Since $L$ is $W$-invariant we get that for all $i$ the component 
\begin{equation}
\label{interm}
b_i(x)=a_i(x) - a_i(s_\alpha(x))=0
\end{equation}
if $(\alpha, x)=j$. 
Let us represent $x$ as $x=\tilde x + \frac12 j \alpha^\vee$, then $(\alpha, \tilde x)=0$.
Note that  $s_\alpha(x) =\tilde x - \frac12 j \alpha^\vee$. Hence the relation \eqref{interm} implies the quasi-invariant condition \eqref{trquasi}. 

The converse statement follows using the same arguments. Note also that the condition $\sum \alpha_i a_i=0$ if $(\alpha, x)=0$, $\alpha \in \mathcal R$, is always satisfied for a $W$-invariant vector field $L$.
\end{proof}


Let $Q_m^{tr, V}$ denote the isotypic component of $V\cong V^*$ of $W$-module $Q_m^{tr}$. Previous considerations allow to express the module $D(Cat)^W$ as follows.
\begin{theorem}
\label{mth2}
$D(Cat)^W \cong\text{\rm Hom}_W(V,Q_m^{tr, V})$, and
$Q_m^{tr, V}\cong D(Cat)^W\otimes V$ as modules over $(S V^*)^W\otimes \C W$. 
\end{theorem}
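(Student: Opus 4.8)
The plan is to mirror the structure already used for Theorem~\ref{mth1}, replacing the divisibility characterization of $D_m$ by the difference-equation characterization of $D(Cat)$ established in the preceding proposition. First I would record the analogue of Proposition~\ref{prop2}: if $0\neq L=\sum_{i=1}^N a_i\partial_i\in D(Cat)^W$, then the linear span $V_a=\langle a_1,\dots,a_N\rangle\subseteq SV^*$ is isomorphic to $V^*\cong V$ as a $W$-module, via $\xi\mapsto L(\xi)$. Surjectivity is immediate since $L(x_i)=a_i$, and $W$-equivariance follows from $W$-invariance of $L$; injectivity needs the analogue of Lemma~\ref{onecompall}, namely that a nonzero $W$-invariant vector field cannot kill a nonzero $\beta\in V^*$ — the proof there is purely algebraic (using $s_H^{-1}\beta=\beta-\beta(\alpha_H^\vee)\alpha_H$ and that the $\alpha_H^\vee$ with $\beta(\alpha_H^\vee)\neq 0$ span $V$) and goes through verbatim for the root-system reflections $s_\alpha$. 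Since the $a_i$ land in $Q_m^{tr}$ by the previous proposition, the span $V_a$ lies inside $Q_m^{tr}$ and hence inside the isotypic component $Q_m^{tr,V}$.

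Next I would define the map $\Theta\colon\operatorname{Hom}_W(V,Q_m^{tr})\to D(Cat)^W$ by $\Theta(\varphi)=\sum_{i=1}^N\varphi(x_i)\partial_i$, exactly as in Theorem~\ref{mth1}, noting that here we take homomorphisms from $V$ rather than $V^*$ because in the real (Weyl) case $V\cong V^*$ and the trigonometric quasi-invariant condition \eqref{trquasi} is the natural one for $Q_m^{tr,V}$. One checks that $\Theta(\varphi)$ is $W$-invariant (from $W$-equivariance of $\varphi$ and the formula $w\sum f_i(x)\partial_i=\sum f_i(w^{-1}x)w\partial_i$) and that each component $\varphi(x_i)$ lies in $Q_m^{tr}$, so $\Theta(\varphi)\in D(Cat)^W$ by the proposition. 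The inverse sends $\sum a_i\partial_i$ to the homomorphism $x_k\mapsto a_k$; this is well-defined as a $W$-homomorphism $V^*\to Q_m^{tr}$ precisely because of the span-isomorphism of the first step, and the two maps are mutually inverse by construction. This gives the first claimed isomorphism $D(Cat)^W\cong\operatorname{Hom}_W(V,Q_m^{tr,V})$.

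For the second isomorphism I would invoke the standard decomposition of an isotypic component of a module over $(SV^*)^W\otimes\C W$: since $Q_m^{tr}$ is a $W$-module (and a module over the invariants), its $V$-isotypic component satisfies $Q_m^{tr,V}\cong\operatorname{Hom}_W(V,Q_m^{tr})\otimes V$ as $(SV^*)^W\otimes\C W$-modules, with $\C W$ acting on the $V$ factor and $(SV^*)^W$ on the $\operatorname{Hom}$ factor; this is the same formal step used just before Theorem~\ref{mth1}. Combining with the first isomorphism yields $Q_m^{tr,V}\cong D(Cat)^W\otimes V$ as claimed, and one should remark that the $(SV^*)^W$-module structure on $D(Cat)^W$ is the evident one (multiplication of vector field coefficients by invariants), which matches the structure on $\operatorname{Hom}_W(V,Q_m^{tr})$.

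The main obstacle I anticipate is not any single deep step but making sure the identification $V\cong V^*$ is handled consistently: the trigonometric quasi-invariants $Q_m^{tr}$ are naturally attached to points of $V$ via shifts by coweights $\alpha^\vee$, so the relevant $W$-module is $V$ rather than $V^*$, and one must check that $\Theta$ really lands in and surjects onto $D(Cat)^W$ with the correct equivariance — i.e.\ that the span-isomorphism of step one produces a genuine element of $\operatorname{Hom}_W(V,Q_m^{tr,V})$ and not merely $\operatorname{Hom}_W(V^*,\,\cdot\,)$. Once the bookkeeping of $V$ versus $V^*$ is pinned down (using the $W$-invariant inner product to fix the identification once and for all), the rest is a routine transcription of the homogeneous-case argument, with the divisibility condition $\alpha_H^{m_Hn_H}\mid L(\alpha_H)$ replaced throughout by the vanishing condition $L((\alpha,x))\in((\alpha,x)-j)SV^*$ and its reformulation \eqref{trquasi}.
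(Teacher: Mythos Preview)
Your proposal is correct and follows essentially the same approach as the paper: the paper states Theorem~\ref{mth2} without an explicit proof, simply noting that ``previous considerations'' (meaning Proposition~5.1 together with the analogues of Lemma~\ref{onecompall}, Proposition~\ref{prop2}, and the isotypic decomposition used before Theorem~\ref{mth1}) yield the result. Your write-up is a faithful and careful expansion of precisely those considerations, including the correct observation that in the Weyl-group setting the $W$-invariant inner product gives $V\cong V^*$, so the $V$ versus $V^*$ bookkeeping is harmless.
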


Let 
$$
cCat =
\{\Pi_{\alpha, j}^z: \alpha\in {\mathcal R}_+, -m_\alpha\le j \le m_\alpha\}\cup \Pi^z,
$$
where 
$
\Pi_{\alpha, j}^z=\{(x,z)\in V\oplus \C: (\alpha,x) = j z\}
$
and
$
\Pi^z =\{(x,0)\in V\oplus \C: x\in V\}.
$
The freeness of $cCat$ is known by \cite{YC at} and \cite{AT2}. Let us derive the freeness of $D(cCat)$ by using the freeness of quasi-invariant module $Q_m^{tr,V}$.

For a filtered algebra $A$ let $\bar A = gr A$ be its associated graded algebra, and let $gr\colon A \to \bar A$ be the corresponding map. 
The algebra $Q_m^{tr}$ has a natural filtration by the degrees of the polynomials. 
The associated graded algebra  $gr(Q_m^{tr})$
may be thought of as an algebra of the highest order terms of the polynomials from $Q_m^{tr}$. It is stated in \cite[Proposition 6.5]{ERF} that $gr(Q_m^{tr})\cong Q_m$, and that $Q_m^{tr}$ is a free module over the invariant ring $(S V^*)^W$. 

\begin{theorem}
In the above notation, 
\begin{itemize}
\item [(1)]
$D(Cat)^W$ is a free module over $(SV^*)^W$, and it has a basis 
$\theta_1,\ldots,\theta_N$ such that 
the degrees of the leading terms of each $\theta_i$ are equal to 
the degree of an element of a homogeneous basis for $D_m$.

\item [(2)]
$D(cCat)$ is free over $SV^*$.
\end{itemize}
\label{Catfree}
\end{theorem}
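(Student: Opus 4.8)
\emph{Part (1).} The plan is to transport freeness from the trigonometric quasi-invariants, which by \cite[Proposition~6.5]{ERF} form a free module over $R:=(SV^*)^W$ with associated graded $Q_m$. By Theorem~\ref{mth2} there is an isomorphism $D(Cat)^W\cong\mathrm{Hom}_W(V,Q_m^{tr,V})$, compatible with the degree filtration (it sends $\sum a_i\p_i$ to $x_k\mapsto a_k$ up to $V\cong V^*$), and since $V$ is irreducible this equals $\mathrm{Hom}_W(V,Q_m^{tr})=(V^*\otimes Q_m^{tr})^W$. As $m$ is $W$-invariant, $W$ acts $R$-linearly on $Q_m^{tr}$, so this is the image of an $R$-linear idempotent on a free $R$-module, hence a finitely generated projective, and therefore free, $R$-module (by Quillen--Suslin, or by lifting a homogeneous basis through the associated graded). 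Since taking $W$-invariants commutes with $gr$ in characteristic $0$, passing to associated graded gives $gr\,D(Cat)^W\cong\mathrm{Hom}_W(V,Q_m)\cong D_m^W$, the last step being Theorem~\ref{mth1} together with $V\cong V^*$. Lifting a homogeneous $R$-basis of $D_m^W$ (one may take the $W$-invariant basis $L^{(k)}$ of Theorem~\ref{thmFreeAll}, which is also an $R$-basis of $D_m^W$ by the usual $W$-averaging of coefficients) back to $D(Cat)^W$ produces $\theta_1,\ldots,\theta_N$ generating over $R$ and $R$-independent (a relation of lowest filtration degree would descend to a relation among the leading terms), hence an $R$-basis, whose leading terms $\bar\theta_i$ form a homogeneous $R$-basis of $D_m^W$. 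Thus $\deg\bar\theta_i$ are the exponents $d_i$ of $D_m$, which is assertion~(1).

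\emph{An auxiliary determinant.} For~(2) I would first establish
$$
\det\bigl(\theta_i(x_k)\bigr)_{1\le i,k\le N}=c\prod_{\alpha\in{\mathcal R}_+}\ \prod_{j=-m_\alpha}^{m_\alpha}\bigl((\alpha,x)-j\bigr),\qquad 0\ne c\in\C.
$$
Divisibility of the determinant by each affine linear form $(\alpha,x)-j$ follows from $\theta_i\in D(Cat)$ by the standard column operation (replace a column by the combination $\sum_k\alpha_k\theta_i(x_k)=\theta_i((\alpha,x)-j)$), and these forms are pairwise non-proportional. The degrees match: $\deg\det\le\sum_i d_i=\sum_{H\in\A}(m_Hn_H+1)=\sum_{\alpha\in{\mathcal R}_+}(2m_\alpha+1)$ by \eqref{degrees} and Proposition~\ref{bbcor} (here $n_H=2$), equal to the degree of the product, so the quotient is a constant $c$. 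Its top-degree part is $\det(\bar\theta_i(x_k))$, which is nonzero since $\bar\theta_1,\ldots,\bar\theta_N$ is a basis of $D_m^W$, equivalently of $D_m$ over $SV^*$, so Saito's criterion for $D_m$ (Lemmas~\ref{propor},~\ref{lem2}) gives $\det(\bar\theta_i(x_k))=(\text{nonzero const})\prod_{\alpha\in{\mathcal R}_+}(\alpha,x)^{2m_\alpha+1}$; comparing leading terms, $c\ne0$.

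\emph{Part (2).} Write $f_i^{(k)}=\theta_i(x_k)$ (of degree $\le d_i$) and let $\widehat{f_i^{(k)}}=z^{d_i}f_i^{(k)}(x_1/z,\ldots,x_N/z)\in S(V\oplus\C)^*$ be its homogenization to degree $d_i$. Consider the homogeneous derivations
$$
\widetilde\theta_i=\sum_{k=1}^N\widehat{f_i^{(k)}}\,\p_{x_k}\ (\deg=d_i),\qquad \theta_E=\sum_{k=1}^N x_k\,\p_{x_k}+z\,\p_z\ (\deg=1).
$$
They lie in $D(cCat)$: for $\theta_E$ by Euler's identity; for $\Pi^z$ since $\widetilde\theta_i(z)=0$; and for $\Pi^z_{\alpha,j}$, writing $\sum_k\alpha_k f_i^{(k)}=((\alpha,x)-j)h$ with $\deg h\le d_i-1$, homogenization gives $\sum_k\alpha_k\widehat{f_i^{(k)}}=((\alpha,x)-jz)\widehat h$. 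Expanding the $(N+1)\times(N+1)$ matrix with rows $\widetilde\theta_1,\ldots,\widetilde\theta_N,\theta_E$ and columns indexed by $x_1,\ldots,x_N,z$ along its last column (entries $0,\ldots,0,z$) gives $z\det(\widehat{f_i^{(k)}})=z\cdot z^{\sum_i d_i}\det(f_i^{(k)})(x/z)$, which, since $\det(f_i^{(k)})$ has degree $\sum_i d_i$, is the homogenization
$$
z\cdot c\prod_{\alpha\in{\mathcal R}_+}\ \prod_{j=-m_\alpha}^{m_\alpha}\bigl((\alpha,x)-jz\bigr)=c\prod_{H\in cCat}\alpha_H.
$$
Since $c\ne0$, Saito's criterion shows $\widetilde\theta_1,\ldots,\widetilde\theta_N,\theta_E$ is a free basis of $D(cCat)$ over $S(V\oplus\C)^*$, with exponents $(d_1,\ldots,d_N,1)$; in particular $D(cCat)$ is free.

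\emph{Main obstacle.} The two non-formal points are: pinning the leading-term degrees $d_i$ to the exponents of $D_m$, which rests on $gr\,Q_m^{tr}\cong Q_m$ and $\mathrm{Hom}_W(V,Q_m)\cong D_m^W$; and the nonvanishing of the constant $c$, which uses Saito's criterion already proved for $D_m$. Everything else is bookkeeping with homogenizations. Alternatively, the auxiliary determinant together with a Cramer's-rule argument shows $\theta_1,\ldots,\theta_N$ is already a basis of $D(Cat)$ over $\C[x_1,\ldots,x_N]$, so that~(2) also follows from the cone--decone theorem applied to $cCat=c(Cat)$.
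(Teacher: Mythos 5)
Your main argument is correct and follows essentially the same route as the paper: part (1) transports freeness from $\mathrm{gr}(Q_m^{tr})\cong Q_m$ through the isomorphism of Theorem \ref{mth2} and lifts a homogeneous basis, and part (2) homogenizes a basis whose leading terms form a basis of $D_m$ and applies Saito's criterion on the cone. The only substantive difference is cosmetic: you verify the determinant identity for $D(Cat)$ and its homogenization explicitly, whereas the paper invokes the independence-plus-degree-count form of Saito's criterion for the central arrangement $cCat$; the two are equivalent given that $\sum_i d_i=|Cat|$ and that the leading terms are independent over $SV^*$. One caveat: your closing ``alternatively'' is not valid. Freeness of the affine arrangement $Cat$ does \emph{not} imply freeness of its cone --- there is no cone--decone theorem in that direction --- and the paper's Remark immediately following this theorem exhibits the counterexample $x_1x_2(x_1+x_2+1)$, which is free in $\C^2$ while its cone $x_1x_2z(x_1+x_2+z)$ is not free in $\C^3$. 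What one genuinely needs, and what your main argument does supply, is a basis of $D(Cat)$ adapted to the degree filtration so that the homogenized derivations together with $\theta_E$ still pass Saito's criterion upstairs; that adaptedness is exactly what fails in the counterexample, so the aside should be dropped.
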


\begin{proof}

(1)\,\,
    Consider filtration $Q_m^{tr}=\cup_{i=0}^\infty F_i$, where the space $F_i$ consists of the quasi-invariants of degree less or equal than $i$. There exist complimentary $W$-modules $G_i$ such that $F_i=F_{i-1}\oplus G_i$, where we put $F_{-1}=0$. Let $F_i^V$ and $G_i^V$ be the corresponding isotypic components. Then we have $F_i^V = F_{i-1}^V\oplus G_i^V$.

  For the associated graded algebras we have
  $$
  gr(Q_m^{tr}) \cong Q_m \cong \oplus_{i=0}^\infty F_{i}/F_{i-1} \cong \oplus_{i=0}^\infty G_i,
  $$
 and
   $$
  gr(Q_m^{tr, V})  \cong \oplus_{i=0}^\infty F_{i}^V/F_{i-1}^V \cong \oplus_{i=0}^\infty G_i^V \cong Q_m^V
  $$
  as $(SV^*)^W\otimes W$-modules.
  It follows that $(SV^*)^W$-module $Q_m^{tr, V}$ is free which implies the freness of $D(Cat)^W$ by Theorem \ref{mth2}.
  Moreover, one can choose a basis $\theta_1, \ldots, \theta_N$ in $D(Cat)^W\cong \text{\rm Hom}_W(V, Q_m^{tr, V})$ which is mapped to the elements in the spaces $\text{\rm Hom}_W(V, G_i^V)$ under the associated graded map $gr\colon Q_m^{tr, V}\to Q_m^V$.
  
(2)\,\,
Note that $cCat$ is a central arrangement in $\C^{N+1}$.
 For $\theta=\sum_{i=1}^N f_i(x_1,\ldots,x_N)\partial_{x_i} \in D(Cat)$, let 
\begin{equation}
    \label{thetaprime}
\theta':=z^{\deg \theta}\sum_{i=1}^N f_i(\frac{x_1}{z},\ldots,\frac{x_N}{z})\partial_{x_i},
\end{equation}
where $\deg \theta = \max_i\deg f_i$. It is easy to show that $\theta' \in D(cCat)$. 
Let $\theta_1,\ldots,\theta_N$ be a basis for $D(Cat)^W$ constructed in (1) such that 
$$
\sum_{i=1}^N \deg \theta_i=\sum_{i=1}^N \deg \theta_i'=|Cat|.
$$
By definition vector fields $\theta_1,\ldots,\theta_N$ are independent over $(SV^*)^W$. Since 
$ gr(\theta_1),\ldots,gr(\theta_N) \in D_m$ are independent over $SV^*$ as shown in (1) and 
Theorem \ref{thmFreeAll}, it holds that $\theta_1,\ldots,\theta_N$ are independent over $SV^*$ too. 
Then the corresponding derivations $\theta_1',
\ldots,\theta_N'$ 
are independent over 
$SV^*[z]$. 
Hence the derivations 
$\theta_1',\ldots,\theta_N'$ together with $\theta_E:=
\sum_{i=1}^N x_i\partial_{x_i}+z\partial_z$ form a free basis for 
$D(cCat)$ by Saito's criterion. 
\end{proof}

\begin{remark}
In general the freeness of the deconing of a central arrangement $\mathcal{A}$ does not imply the freeness of $\mathcal{A}$. For example, see a non-free arrangement in $\C^3$ given by  $x_1 x_2 z(x_1 + x_2 + z)=0$ and 
its free deconing in $\C^2$ given by $x_1 x_2 (x_1 + x_2 + 1)=0$ which has a free basis 
$$
\theta_1 = x_1(x_1 + 1)\partial_{x_1}+ x_1 x_2 \partial_{x_2},\quad \theta_2 = x_1 x_2 \partial_{x_1}+ x_2 (x_2+1)\partial_{x_2}.
$$
Note that in this case components of vector fields $\theta_1', \theta_2', \theta_E$ make a matrix with the determinant $x_1 x_2 z^2 (x_1 + x_2 +z)$.
\end{remark}

%
We can define a surjective map 
$\Phi\colon D(Cat)^W \rightarrow D_m^W$ as the map 
which keeps the highest order term.
Let also $gr\colon  Q_m^{tr, V} \to Q_m^V$ be the map which keeps the highest order term. Let $\chi_\delta$, $\delta\in V^*$, map a logarithmic vector field $L$ to the polynomial $\delta(L)$.
 It induces maps $\chi_{\delta, 1}: D(Cat)^W \to  Q_m^{tr, V}$, $\chi_{\delta, 2}:D_m^W \to  Q_m^{V}$ by Theorems \ref{mth2}, \ref{mth1}.

 Previous discussion leads to the following statement.
 
 \begin{proposition}
We have the following commutative diagram:
\begin{equation*}
\begin{tikzcd}
D(Cat)^W \arrow[r,"\chi_{\delta,1}"] \arrow[d,swap, "\Phi"]&
 Q_m^{tr, V} \arrow[d,swap, "gr"]  \\
D_m^W     \arrow[r,"\chi_{\delta,2}"] & Q_m^V 
\end{tikzcd}
\end{equation*}
 \end{proposition}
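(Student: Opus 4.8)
The plan is to verify commutativity of the square by unwinding all four maps on an arbitrary element of $D(Cat)^W$ and comparing the two resulting elements of $Q_m^V$. Take $L=\sum_{i=1}^N a_i\partial_i\in D(Cat)^W$ with $a_i\in Q_m^{tr}$ (by the Proposition preceding Theorem~\ref{mth2}). Going right then down: $\chi_{\delta,1}(L)=\delta(L)=\sum_{i=1}^N a_i\,\delta(e_i)\in Q_m^{tr,V}$, where $\delta\in V^*$ is evaluated on the coefficient vector; then $gr$ takes its top-degree term. Going down then right: $\Phi(L)=\sum_{i=1}^N gr(a_i)\,\partial_i$ is the highest-order part of $L$, an element of $D_m^W$ by the construction in Theorem~\ref{Catfree}(1) together with Theorem~\ref{thmFreeAll}; then $\chi_{\delta,2}$ applies $\delta$ to get $\sum_{i=1}^N gr(a_i)\,\delta(e_i)\in Q_m^V$. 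So the claim amounts to the identity
\[
gr\Bigl(\sum_{i=1}^N a_i\,\delta(e_i)\Bigr)=\sum_{i=1}^N gr(a_i)\,\delta(e_i)
\]
in $Q_m^V\subseteq Q_m$.

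The essential point is therefore that the associated-graded (top-degree) operation on $Q_m^{tr}$ is compatible with the fixed linear combination $\sum_i \delta(e_i)(\cdot)$ of coefficients coming from a single $W$-invariant vector field. First I would note that, since $L$ is $W$-invariant and nonzero, all components $a_i$ have the same degree $\deg L$ (this is exactly the content of Lemma~\ref{onecompall}/Proposition~\ref{prop2} transported to the trigonometric setting: a $W$-invariant field is determined by any one nonzero component, and its degree is the common degree $d$ of the components, with the degree-$d$ parts themselves forming a copy of $V^*$). Consequently $\sum_i a_i\delta(e_i)$ has degree $d$ unless the leading terms conspire to cancel; but the degree-$d$ parts $gr(a_i)$ are precisely the coefficients of $gr(L)=\Phi(L)\in D_m^W$, and $\Phi(L)\neq 0$ because $gr$ of a basis of $D(Cat)^W$ is a basis of $D_m$ by Theorem~\ref{Catfree}(1); by Lemma~\ref{onecompall} the linear functional $\delta$ does not annihilate the tuple $(gr(a_i))_i$ for generic $\delta$, and in fact for no nonzero $\delta$, since $V_{gr(L)}\cong V^*$ and $\delta\mapsto \delta$ acting on it is an isomorphism onto $V^*$. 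Hence no cancellation occurs, so the top-degree term of $\sum_i a_i\delta(e_i)$ is $\sum_i gr(a_i)\delta(e_i)$, which is the desired identity. The case $\Phi(L)=0$ forces $L=0$ and the diagram commutes trivially.

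It remains to check that the four maps actually land where the diagram says and that $gr$ and $\Phi$ are well defined with compatible normalizations; this is already supplied by Theorem~\ref{mth2} (giving $\chi_{\delta,1}$ and the identification $D(Cat)^W\cong\mathrm{Hom}_W(V,Q_m^{tr,V})$), by Theorem~\ref{mth1} (giving $\chi_{\delta,2}$), and by Theorem~\ref{Catfree}(1) (which constructs the basis $\theta_1,\dots,\theta_N$ of $D(Cat)^W$ mapping under $gr$ to $Q_m^V$, hence defines $\Phi$ and shows $\Phi(\theta_j)=gr(\theta_j)\in D_m^W$). One then extends the identity from basis elements to all of $D(Cat)^W$ by $(SV^*)^W$-linearity, using that $gr$ of a product with an invariant polynomial is the product of the tops (the invariant polynomial being homogeneous after passing to its own leading term, or simply noting the filtration is multiplicative). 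The only mild subtlety — the main obstacle — is ruling out degree drop under the linear combination $\sum_i\delta(e_i)(\cdot)$; as explained above this follows from Lemma~\ref{onecompall} applied to $\Phi(L)\in D_m^W$, so once that observation is in place the proof is immediate.
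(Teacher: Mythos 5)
Your proof is correct and is essentially the verification the paper leaves implicit (the paper offers no proof beyond ``previous discussion leads to the following statement''). The one genuinely non-trivial point --- that applying the fixed functional $\delta$ to the coefficient tuple cannot drop the degree, so the leading term of $\chi_{\delta,1}(L)$ really is $\chi_{\delta,2}$ of the leading term of $L$ --- you handle correctly via Lemma~\ref{onecompall} applied to the (nonzero, homogeneous, $W$-invariant) top part of $L$.
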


\subsection{Catalan arrangements of type $BC_N$}
From now on let us consider the non-reduced root system ${\mathcal R}=BC_N$. Its positive half consists of the vectors $e_i, 2 e_i$ ($1\le i \le N$), and vectors $e_i \pm e_j$ ($1\le i<j \le N$), where $e_1, \ldots, e_N$ is the standard basis in $V$. The corresponding finite reflection group $W$ is of type $B_N$. Let us consider the $W$-invariant multiplicity function $m$ which has values $m_1=m(e_i)$, $m_2 = m(2 e_i)$, $m_3 = m(e_i \pm e_j)$. Consider the space of quasi-invariants $Q^{tr}_m(BC_N)$ which in this case by definition consists of polynomials $p(x)$
satisfying conditions
\begin{align}
p(x+s e_j) &= p(x- s e_j) \mbox{ at } x_j =0, s=1, \ldots, m_1,  \label{quasi1}\\
p(x+(s-\frac12)  e_j) &= p(x- (s-\frac12) e_j) \mbox{ at } x_j =0, s=1, \ldots, m_2,\label{quasi2}\\
p(x \pm s e_i) &= p(x + s e_j)  \mbox{ at } x_j =\pm x_i, s=1, \ldots, m_3. \label{quasi3}
\end{align}

Note that $Q_m^{tr}$ is an $(SV^*)^W\otimes W$-module. 
Let us explain the freeness of $Q^{tr}_m(BC_N)$ as $(S V^*)^W \cong \C[x]^W$-module following the outline in \cite{ERF}.

The conditions \eqref{quasi1} -- \eqref{quasi3} can be rearranged as follows. For a vector $\alpha \in V$ define the operator $\delta_\alpha$ which maps a function $f(x)$ to the function $\delta_\alpha f(x) := f(x+\alpha) - f(x - \alpha)$.
\begin{proposition}\label{prop1delta} (\cite{CSV}, \cite{FVr}, \cite{Vrabec}).
Let $l, r \in \N\cup\{0\}$. A polynomial $p(x)$ satisfies conditions
\begin{align}
&p(x+ s\alpha) = p(x - s \alpha) \mbox{ at } (\alpha, x) =0, s=1, \ldots, l, \label{morequasi1}\\
\nonumber \mbox{ and } \\
&p(x+ (l+2 s)\alpha) = p(x - (l + 2s) \alpha) \mbox{ at } (\alpha, x) =0, s=1, \ldots, r \label{morequasi2}
\end{align}
if and only if
\begin{align*}
&(\delta_\alpha \circ (\alpha, x)^{-1})^{s-1} \circ \delta_\alpha p(x) = 0 \mbox{ at } (\alpha, x)=0, s= 1, \ldots, l,\\
\nonumber \mbox{ and } \\
&(\delta_{2 \alpha} \circ (\alpha, x)^{-1})^{t} \circ (\delta_\alpha \circ (\alpha, x)^{-1})^{l-1} \circ \delta_\alpha p(x) = 0 \mbox{ at } (\alpha, x)=0, t= 1, \ldots, r.
\end{align*}
\end{proposition}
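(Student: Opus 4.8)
The plan is to reduce the statement to a one–variable assertion about iterated divided differences, prove the case $r=0$ by induction on $l$, and then deduce the general case by a rescaling that turns the $\delta_{2\alpha}$–conditions into $\delta_\alpha$–conditions. First I would choose coordinates adapted to $\alpha$. Since $(\alpha,\alpha)\neq 0$ we may split $V=\{(\alpha,x)=0\}\oplus\C\alpha$ and put on the second summand a linear coordinate $y$ normalised so that $x\mapsto x+\alpha$ acts by $y\mapsto y+1$; then $(\alpha,x)=cy$ for a nonzero scalar $c$, and every condition in the Proposition becomes a statement about the one–variable polynomial $P(y):=p(x)$, the coordinates on the hyperplane being inert parameters. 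In these coordinates $\delta_\alpha$ is $P\mapsto P(y+1)-P(y-1)$, $\delta_{2\alpha}$ is $P\mapsto P(y+2)-P(y-2)$, and (up to the scalar $c$) $(\alpha,x)^{-1}$ is division by $y$, while ``$=0$ at $(\alpha,x)=0$'' means ``vanishes at $y=0$''. Putting $a_{-1}(n):=P(n)-P(-n)$, the left–hand side of the Proposition is precisely the vanishing of $a_{-1}(1),\dots,a_{-1}(l)$ together with that of $a_{-1}(l+2),a_{-1}(l+4),\dots,a_{-1}(l+2r)$.

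Next I would organise the right–hand side through the chain $g_{-1}:=P$, $g_i:=\bigl(g_{i-1}(y+1)-g_{i-1}(y-1)\bigr)/y$, which is a polynomial, defined one step at a time, exactly when the preceding conditions hold: indeed $(\delta_\alpha(\alpha,x)^{-1})^{s-1}\delta_\alpha P=\delta_\alpha g_{s-2}$, so the $s$-th condition reads $g_{s-2}(1)=g_{s-2}(-1)$. Setting $a_i(n):=g_i(n)-g_i(-n)$ for $n\ge1$ and $a_i(0):=0$, the identity $g_i(n)=\bigl(g_{i-1}(n+1)-g_{i-1}(n-1)\bigr)/n$ gives the recursion $a_i(n)=\tfrac1n\bigl(a_{i-1}(n+1)-a_{i-1}(n-1)\bigr)$, and the $s$-th right–hand condition becomes $a_{s-2}(1)=0$. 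Two elementary facts then settle $r=0$: a \emph{propagation lemma} --- if $a_{-1}(1)=\cdots=a_{-1}(N)=0$ then $a_i(n)=0$ for all $n+i+1\le N$ (immediate induction on $i$ from the recursion); and an \emph{extraction identity} --- assuming $a_{-1}(1)=\cdots=a_{-1}(l-1)=0$, telescoping the recursion from $a_{l-2}(1)$ downwards and discarding each ``$(n-1)$–shifted'' term by the propagation lemma yields $a_{l-2}(1)=\tfrac1{(l-1)!}\,a_{-1}(l)$. Hence, modulo the first $l-1$ right–hand conditions, the $l$-th one is equivalent to $a_{-1}(l)=0$, i.e. to the $l$-th left–hand condition; induction on $l$ finishes the case $r=0$.

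For the remaining $r$ conditions, note that under the first $l$ conditions the polynomial $g_{l-1}$ is defined and $(\delta_\alpha(\alpha,x)^{-1})^{l-1}\delta_\alpha P=(\alpha,x)\,g_{l-1}$; in the rescaled variable $v:=y/2$ the operator $\delta_{2\alpha}$ acts as $F\mapsto F(v+1)-F(v-1)$ and $(\alpha,x)^{-1}$ acts as $\tfrac12 v^{-1}$. Simplifying, $(\delta_{2\alpha}(\alpha,x)^{-1})^{t}(\delta_\alpha(\alpha,x)^{-1})^{l-1}\delta_\alpha P$ equals, up to nonzero scalars, the operator $(\delta_\alpha(\alpha,x)^{-1})^{t-1}\delta_\alpha$ read in the variable $v$ and applied to $\widehat g(v):=g_{l-1}(2v)$. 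Applying the case $r=0$ in the variable $v$ to $\widehat g$, the conjunction of the $r$ remaining conditions is therefore equivalent --- given the first $l$ --- to $g_{l-1}(2t)=g_{l-1}(-2t)$, i.e. to $a_{l-1}(2t)=0$, for $t=1,\dots,r$. A final induction on $t$, again using the propagation lemma and a telescope as above, shows that given the first $l$ conditions together with $a_{l-1}(2)=\cdots=a_{l-1}(2(t-1))=0$ one has $a_{l-1}(2t)=\bigl(\prod_{j=0}^{l-1}\tfrac1{2t+j}\bigr)a_{-1}(l+2t)$; so the $t$-th remaining right–hand condition is equivalent to $P(l+2t)=P(-(l+2t))$. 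Concatenating these equivalences yields the Proposition (the degenerate case $l=0$ is covered directly, since it reduces the second group of conditions to the first group for $2\alpha$).

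I expect the main obstacle to be the bookkeeping in the two telescoping identities: at each stage one must check that precisely the previously established vanishings --- of the relevant $a_{-1}(m)$, and in the $r$–part also of the $a_{l-1}(2t')$ --- are the ones needed to kill the unwanted terms, and that the accumulated scalar, being a product of reciprocals of nonzero integers, never vanishes; the propagation lemma is stated exactly so as to supply these vanishings. A secondary, purely formal point is that $(\alpha,x)^{-1}$ only makes sense on the successive subspaces cut out by the earlier conditions, so the whole induction must be run strictly in the order of the conditions --- once that order is fixed, each individual step is a routine finite–difference computation.
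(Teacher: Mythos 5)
The paper does not actually prove this proposition --- it is quoted from \cite{CSV}, \cite{FVr}, \cite{Vrabec} --- so there is no in-text argument to compare yours against; what you have written is a self-contained elementary proof, and it is essentially correct. The reduction to one variable along the line $x_0+\C\alpha$, the recursion $a_i(n)=\tfrac1n\bigl(a_{i-1}(n+1)-a_{i-1}(n-1)\bigr)$ for the antisymmetrised values of the successive quotients $g_i$, the extraction identity $a_{l-2}(1)=\tfrac1{(l-1)!}\,a_{-1}(l)$, and the rescaling $v=y/2$ that turns the $\delta_{2\alpha}$-chain applied to $(\delta_\alpha(\alpha,x)^{-1})^{l-1}\delta_\alpha p=(\alpha,x)g_{l-1}$ into a $\delta_\alpha$-type chain for $g_{l-1}(2v)$ all check out, as does the final constant $\prod_{j=0}^{l-1}\tfrac1{2t+j}$ (e.g.\ $a_2(2)=\tfrac1{24}a_{-1}(5)$ for $l=3$, $t=1$, and $a_2(4)=\tfrac1{120}a_{-1}(7)$ for $l=3$, $t=2$). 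Your insistence that the conditions be read strictly in order, so that each division by $(\alpha,x)$ is applied to a genuine polynomial, is exactly the right way to make the statement well-posed.

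One step should be made explicit rather than delegated to ``the propagation lemma and a telescope as above.'' Your propagation lemma assumes the \emph{consecutive} vanishings $a_{-1}(1)=\cdots=a_{-1}(N)=0$, but at stage $t\ge 2$ of the second group you only know $a_{-1}$ vanishes on $\{1,\dots,l\}\cup\{l+2,l+4,\dots,l+2(t-1)\}$, and the discarded terms $a_{l-2-j}(2t+j-1)$ in the telescope for $a_{l-1}(2t)$ are not covered by the lemma as stated. What rescues the argument is parity: unfolding $a_i(n)$ down to the $a_{-1}$'s produces only arguments congruent to $n+i+1$ modulo $2$ and bounded above by $n+i+1$; for each discarded term this means arguments $\equiv l\pmod 2$ and $\le l+2(t-1)$, all of which lie in the hypothesised vanishing set. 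So you need a parity-refined propagation lemma --- provable by the same one-line induction --- and with that inserted the proof is complete.
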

This statement has the following corollary.
\begin{proposition}\label{prop2delta}
Suppose a polynomial $p(x)$ satisfies conditions \eqref{morequasi1}, \eqref{morequasi2}. 
Then the highest order term $p_0$ of the polynomial $p$ satisfies
$$
\partial_\alpha^{2 s-1} p_0(x) = 0 \mbox{ at } (\alpha, x)=0, s = 1, \ldots,  l+r.
$$
\end{proposition}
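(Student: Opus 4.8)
The plan is to deduce Proposition \ref{prop2delta} from Proposition \ref{prop1delta} by extracting leading-order behaviour from the listed difference conditions. First I would write $p = p_0 + (\text{lower order})$ with $p_0$ homogeneous of degree $d = \deg p$, and examine each operator $\delta_\alpha$ and $\delta_{2\alpha}$ on the scale of the degree filtration. The key observation is that $\delta_\alpha = 2\sum_{k\ge 0} \frac{1}{(2k+1)!}\partial_\alpha^{2k+1}$ (the odd part of the shift operator $e^{\partial_\alpha}$), so $\delta_\alpha$ lowers degree by exactly $1$ on a homogeneous polynomial, with leading part $2\partial_\alpha$; similarly $\delta_{2\alpha}$ has leading part $4\partial_\alpha$. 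Division by $(\alpha,x)$ raises degree by $1$, so each composite factor $\delta_\alpha \circ (\alpha,x)^{-1}$ is degree-preserving with leading operator $2\partial_\alpha \circ (\alpha,x)^{-1}$, and $\delta_{2\alpha}\circ(\alpha,x)^{-1}$ has leading operator $4\partial_\alpha\circ(\alpha,x)^{-1}$.

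The main step is then to track the vanishing conditions from Proposition \ref{prop1delta} through the leading-order terms. For the first family, $(\delta_\alpha\circ(\alpha,x)^{-1})^{s-1}\circ\delta_\alpha\, p = 0$ on $\{(\alpha,x)=0\}$ for $s=1,\dots,l$: applying this to $p_0$ and taking leading terms shows that $(2\partial_\alpha\circ(\alpha,x)^{-1})^{s-1}(2\partial_\alpha p_0)$ must vanish on the hyperplane. One then checks, by an induction on $s$ (essentially the same Taylor-expansion argument underlying \eqref{derivquasi}), that this is equivalent to $\partial_\alpha^{2s-1} p_0 = 0$ on $\{(\alpha,x)=0\}$; the operator $\partial_\alpha\circ(\alpha,x)^{-1}$ applied to something already divisible by a power of $(\alpha,x)$ just peels off one power and differentiates. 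This yields the conclusion for $s=1,\dots,l$. For the second family, the operator string $(\delta_{2\alpha}\circ(\alpha,x)^{-1})^t\circ(\delta_\alpha\circ(\alpha,x)^{-1})^{l-1}\circ\delta_\alpha\, p=0$ on the hyperplane, at leading order, becomes $(4\partial_\alpha\circ(\alpha,x)^{-1})^t(2\partial_\alpha\circ(\alpha,x)^{-1})^{l-1}(2\partial_\alpha p_0)=0$ there; since up to nonzero scalars $4\partial_\alpha\circ(\alpha,x)^{-1}$ and $2\partial_\alpha\circ(\alpha,x)^{-1}$ act the same way on the relevant divisibility filtration, this gives $\partial_\alpha^{2(l-1)+2t+1}p_0 = \partial_\alpha^{2(l+t)-1}p_0=0$ on $\{(\alpha,x)=0\}$ for $t=1,\dots,r$. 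Combining both families covers $s=1,\dots,l+r$, as claimed.

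The technical heart — and the step I expect to require the most care — is the equivalence ``$(\partial_\alpha\circ(\alpha,x)^{-1})^{k}\circ\partial_\alpha\, p_0$ vanishes on $\{(\alpha,x)=0\}$'' $\iff$ ``$\partial_\alpha^{2k+1}p_0$ vanishes on $\{(\alpha,x)=0\}$'', together with the fact that the lower-order corrections to $\delta_\alpha$ and to the composite operators do not spoil the leading-order vanishing. The cleanest way to handle this is to choose coordinates so that $(\alpha,x)$ is one coordinate, say $u$, with $\partial_\alpha$ proportional to $\partial_u$; then $\partial_u \circ u^{-1}$ acting on $u^j g(u,\dots)$ produces $j u^{j-1} g + u^j \partial_u g$, and one verifies by induction that the operator $(\partial_u\circ u^{-1})^k\partial_u$ applied to $p_0$ vanishes at $u=0$ precisely when $p_0$ is divisible by $u^{2k+1}$, i.e. $\partial_u^{2k+1}p_0|_{u=0}=0$. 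The contributions of the higher Taylor terms in $\delta_\alpha$ (involving $\partial_u^3,\partial_u^5,\dots$) only increase the order of vanishing at each stage, so they are harmless for the leading term; this should be noted but is routine. With this lemma in place the rest is bookkeeping of indices.
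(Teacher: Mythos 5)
Your argument is correct and follows essentially the same route as the paper: pass to leading terms in the conditions of Proposition \ref{prop1delta} (the leading parts of $\delta_\alpha$ and $\delta_{2\alpha}$ both being nonzero multiples of $\partial_\alpha$) to obtain $(\partial_\alpha\circ(\alpha,x)^{-1})^{s-1}\circ\partial_\alpha p_0=0$ on $\{(\alpha,x)=0\}$ for $s=1,\dots,l+r$, and then unwind this into the stated derivative conditions --- the paper's proof records exactly these two steps, while you supply the bookkeeping. One small slip worth fixing: in your final lemma, the vanishing of $(\partial_u\circ u^{-1})^{k}\partial_u p_0$ at $u=0$ (given the lower-order conditions) is equivalent to $\partial_u^{2k+1}p_0|_{u=0}=0$, i.e.\ to the vanishing of the coefficient of $u^{2k+1}$ in the $u$-expansion, not to divisibility of $p_0$ by $u^{2k+1}$; the form of the conclusion you actually use is the correct one.
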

\begin{proof}
By Proposition \ref{prop1delta} we get that the highest term $p_0$ satisfies
$$
(\partial_\alpha \circ (\alpha, x)^{-1})^{s-1} \circ \partial_\alpha p_0(x) = 0 \mbox{ at } (\alpha, x)=0, s= 1, \ldots, l+r,
$$
which implies the statement.
\end{proof}

Consider the space of quasi-invariants $Q_{\widetilde m}$ for the root system $B_N$ with the multiplicity function $\widetilde m( e_i) = m_1+m_2$, $\widetilde m(e_i \pm e_j) = m_3$. 
Proposition \ref{prop2delta} implies the following statement. 
\begin{proposition} \label{prop3delta} 
Let $p \in Q_m^{tr}(BC_N)$. Then the highest order term $p_0\in Q_{\widetilde m}$.  
\end{proposition}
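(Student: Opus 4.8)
The plan is to compare, for each positive root $\alpha$ of $BC_N$, the quasi-invariance conditions satisfied by $p \in Q^{tr}_m(BC_N)$ with those needed for $p_0 \in Q_{\widetilde m}$, and apply Proposition \ref{prop2delta} root by root. Recall that a polynomial $q$ lies in $Q_{\widetilde m}$ (for the reduced root system $B_N$) if and only if $\alpha_H^{2\widetilde m_\alpha}\mid (1-s_\alpha)q$, equivalently (by the argument behind Proposition \ref{propidemp} in the $n_H=2$ case, or by the elementary Taylor-expansion remark around \eqref{derivquasi}) if and only if $\partial_\alpha^{2s-1}q=0$ on $\{(\alpha,x)=0\}$ for $s=1,\ldots,\widetilde m_\alpha$. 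So it suffices to show that $p_0$ satisfies these vanishing conditions with $\widetilde m_\alpha$ equal to $m_1+m_2$ for the short roots $e_j$, and equal to $m_3$ for the roots $e_i\pm e_j$.

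First I would treat the short roots $\alpha=e_j$. The conditions \eqref{quasi1} and \eqref{quasi2} say exactly that $p$ satisfies \eqref{morequasi1} with $l=m_1$ and \eqref{morequasi2} with $r=m_2$ for $\alpha=e_j$: indeed $p(x+(s-\tfrac12)e_j)=p(x-(s-\tfrac12)e_j)$ at $x_j=0$ is the same as $p(x+(m_1+2t)\tfrac12 e_j)=p(x-(m_1+2t)\tfrac12 e_j)$ once one matches parities — here one must be slightly careful, since \eqref{quasi2} involves half-integer shifts $s-\tfrac12$ while \eqref{morequasi2} is stated with shifts $l+2s$; rescaling the root (replacing $\alpha$ by $\tfrac12\alpha$, i.e. using $2e_j$ as the relevant vector in Proposition \ref{prop1delta}) reconciles the two, and this bookkeeping is where I expect to have to be most careful. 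Granting the match, Proposition \ref{prop2delta} gives $\partial_{e_j}^{2s-1}p_0=0$ on $\{x_j=0\}$ for $s=1,\ldots,m_1+m_2$, which is precisely the $Q_{\widetilde m}$ condition at $e_j$ since $\widetilde m(e_i)=m_1+m_2$.

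Next, for the long roots $2e_j$ there is nothing to prove: $B_N$ has no such root, so no condition is imposed in $Q_{\widetilde m}$. For the roots $\alpha=e_i\pm e_j$, condition \eqref{quasi3} is exactly \eqref{morequasi1} with $l=m_3$ (and no $r$-part, $r=0$), after identifying the shift direction: $p(x\pm se_i)=p(x+se_j)$ at $x_j=\pm x_i$ is $p(x+s\alpha^\vee)=p(x-s\alpha^\vee)$ on $\{(\alpha,x)=0\}$ for $\alpha=e_j\mp e_i$, up to normalization of $\alpha^\vee$ (here $(\alpha,\alpha)=2$, so $\alpha^\vee=\alpha$). Proposition \ref{prop2delta} then yields $\partial_\alpha^{2s-1}p_0=0$ on $\{(\alpha,x)=0\}$ for $s=1,\ldots,m_3$, matching $\widetilde m(e_i\pm e_j)=m_3$. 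Having verified the defining vanishing conditions of $Q_{\widetilde m}$ at every positive root of $B_N$, we conclude $p_0\in Q_{\widetilde m}$.

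The main obstacle is purely notational rather than conceptual: aligning the three inhomogeneous conditions \eqref{quasi1}--\eqref{quasi3}, which are phrased with specific shift vectors $e_j$, $(s-\tfrac12)e_j$, $se_i$, with the uniform template \eqref{morequasi1}--\eqref{morequasi2} of Proposition \ref{prop1delta}, keeping track of the factor-of-two rescalings between a root $\alpha$, its double $2\alpha$, and the coroot $\alpha^\vee$, and checking that the parity pattern in \eqref{morequasi2} genuinely reproduces the half-integer shifts of \eqref{quasi2}. Once that dictionary is fixed, the proof is a one-line application of Proposition \ref{prop2delta} followed by the standard reformulation of membership in $Q_{\widetilde m}$ via odd normal derivatives.
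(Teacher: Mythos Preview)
Your proposal is correct and is exactly the approach the paper intends: the paper's own proof is the single sentence ``Proposition~\ref{prop2delta} implies the following statement,'' and what you have written is precisely the root-by-root unpacking of that sentence.

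Two small bookkeeping points, both of which you anticipated. First, your parenthetical ``replacing $\alpha$ by $\tfrac12\alpha$, i.e.\ using $2e_j$'' points in opposite directions; the correct rescaling for the short roots is $\alpha=\tfrac12 e_j$, so that \eqref{quasi2} produces the odd shifts $1,3,\dots,2m_2-1$ and \eqref{quasi1} the even shifts $2,4,\dots,2m_1$ (in units of $\alpha$). These then fit the template \eqref{morequasi1}--\eqref{morequasi2} with $l=2\min(m_1,m_2)$, $r=|m_1-m_2|$ when $m_1\ge m_2$, and with $l=2m_1+1$, $r=m_2-m_1-1$ when $m_2>m_1$; in both cases $l+r=m_1+m_2=\widetilde m(e_j)$. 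Second, for $e_i\pm e_j$ the literal statement of \eqref{quasi3} is not $p(x+s\alpha)=p(x-s\alpha)$ on the nose: one first translates by $\tfrac{s}{2}(e_i+e_j)$ to see that \eqref{quasi3} at $x_i=x_j$ is exactly \eqref{morequasi1} with $\alpha=\tfrac12(e_i-e_j)$ and $l=m_3$, after which Proposition~\ref{prop2delta} applies with $r=0$. These are purely notational fixes and do not affect the argument.
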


Proposition~\ref{prop3delta} gives the inclusion ${\rm gr} (Q^{tr}_m(BC_N)) \subseteq Q_{\widetilde m}$, and we aim to establish the equality. 
\begin{theorem}\label{BCthm} (cf. \cite{ERF})
We have ${\rm gr} (Q^{tr}_m(BC_N)) = Q_{\widetilde m}$.
\end{theorem}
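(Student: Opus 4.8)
The inclusion $\mathrm{gr}(Q^{tr}_m(BC_N)) \subseteq Q_{\widetilde m}$ is already given by Proposition~\ref{prop3delta}, so the task is the reverse inclusion. The plan is to show that every homogeneous element of $Q_{\widetilde m}$ lifts to a (non-homogeneous) quasi-invariant in $Q^{tr}_m(BC_N)$ with the prescribed highest order term. Since both sides are graded $(SV^*)^W$-modules and the graded pieces of $\mathrm{gr}(Q^{tr}_m(BC_N))$ inject into those of $Q_{\widetilde m}$, it suffices to compare Hilbert series: if we can show $\dim Q^{tr}_m(BC_N)_{\le d} = \dim (Q_{\widetilde m})_{\le d}$ for all $d$ (equivalently, that the graded dimensions agree), then the injection of Proposition~\ref{prop3delta} forces equality. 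Thus the whole argument reduces to establishing that $Q^{tr}_m(BC_N)$ is a free $\C[x]^W$-module of the same rank and with the same basis degrees as $Q_{\widetilde m}$.

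First I would recall, from the discussion preceding Theorem~\ref{BCthm} and from \cite{ERF}, that $Q_{\widetilde m}$ for the root system $B_N$ is a free $\C[x]^W$-module (this is the reduced case already covered by the general theory of \cite{BC}, or by \cite{FeiV},\cite{EG} in the Coxeter setting), and note its rank equals $|W|$. Next I would run the filtration argument exactly as in the proof of Theorem~\ref{Catfree}(1): give $Q^{tr}_m(BC_N)$ the filtration $F_d$ by polynomial degree, pick $W$-stable complements $G_d$ with $F_d = F_{d-1}\oplus G_d$, and observe that $\mathrm{gr}(Q^{tr}_m(BC_N)) \cong \bigoplus_d G_d$ sits inside $Q_{\widetilde m}$ as a graded $\C[x]^W\otimes \C W$-submodule. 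To upgrade this to an equality, the key is to produce \emph{enough} genuine quasi-invariants in $Q^{tr}_m(BC_N)$: for each homogeneous $p_0 \in Q_{\widetilde m}$ I would construct an element $p \in Q^{tr}_m(BC_N)$ with leading term $p_0$. This is where Proposition~\ref{prop1delta} is used constructively rather than just for the highest-order implication: the conditions \eqref{quasi1}--\eqref{quasi3} are equivalent, by that proposition applied to each of the three $W$-orbits of roots ($e_i$ with $(l,r)=(?, ?)$ depending on parity of $m_1,m_2$, and $e_i\pm e_j$ with $l=m_3$, $r=0$), to a finite list of linear "interpolation" conditions on $p$ restricted to the hyperplanes $x_j=0$ and $x_j=\pm x_i$, plus their $\delta$-iterates; these conditions only constrain lower-degree corrections once the top term is fixed to satisfy the corresponding differential conditions, which is precisely the content of $p_0 \in Q_{\widetilde m}$.

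The main obstacle — and the crux of the proof — is showing this lifting is \emph{unobstructed}: a priori, solving the finite-difference interpolation conditions order by order could fail at some finite stage because the relevant linear map on a graded piece of $SV^*$ might not be surjective. I would handle this by the standard device (used in \cite{ERF}, \cite{EG}): multiply $p_0$ by the $W$-invariant polynomial $\prod_{\alpha \in \mathcal R_+} (\alpha,x)^{2 k_\alpha}$ for suitable $k_\alpha$ (specifically $k_{e_i} = m_1$, $k_{2e_i}=m_2$, $k_{e_i\pm e_j}=m_3$, or a large common multiple thereof), which automatically produces an element of $Q^{tr}_m(BC_N)$, to see that at least $\prod_\alpha(\alpha,x)^{2k_\alpha}\cdot Q_{\widetilde m}$-worth of classes are attained; this shows $\mathrm{gr}(Q^{tr}_m(BC_N))$ and $Q_{\widetilde m}$ agree after localizing away from the discriminant, hence have the same rank over $\C[x]^W$, hence the same Hilbert series once we know $Q^{tr}_m(BC_N)$ is free. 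The freeness of $Q^{tr}_m(BC_N)$ itself over $\C[x]^W$ is the remaining input; I would cite \cite[Proposition 6.5]{ERF} for it (it is stated there for the relevant trigonometric quasi-invariants, and the $BC_N$ case follows the same outline as flagged in the paragraph preceding Proposition~\ref{prop1delta}), after which the graded injection of Proposition~\ref{prop3delta} between two free modules of equal rank and equal Hilbert series is forced to be an isomorphism, giving ${\rm gr}(Q^{tr}_m(BC_N)) = Q_{\widetilde m}$.
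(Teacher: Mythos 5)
There is a genuine gap, in two places. First, your final input --- the freeness of $Q^{tr}_m(BC_N)$ over $\C[x]^W$ --- is not available at this point: \cite[Proposition 6.5]{ERF} covers the reduced (here $B_N$-type) trigonometric quasi-invariants, and in the paper the freeness for the non-reduced system $BC_N$ is Corollary \ref{corBC}, which is \emph{deduced from} Theorem \ref{BCthm}. Citing it to prove Theorem \ref{BCthm} is circular. Second, and more seriously, even if you grant that both ${\rm gr}(Q^{tr}_m(BC_N))$ and $Q_{\widetilde m}$ are free $\C[x]^W$-modules of rank $|W|$, a graded inclusion of free modules of equal rank need not be an equality: the quotient can be a nonzero torsion module (compare $x^2\C[x]\subset\C[x]$, both free of rank one). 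Your localization/discriminant argument correctly shows the quotient is torsion (this is essentially the paper's chain $\C[x]\Delta_0\subseteq {\rm gr}(Q^{tr}_m(BC_N))\subseteq Q_{\widetilde m}$ of modules of rank $|W|$), but the inference ``same rank and free, hence same Hilbert series'' is false, and equality of Hilbert series is essentially what is to be proved.

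The missing idea is the mechanism that kills the torsion quotient. In the paper this is supplied by Chalykh's difference operator $H$ of \eqref{MRop}, which preserves $Q^{tr}_m(BC_N)$ and whose leading symbol, together with the invariant polynomials, generates a faithful copy of the spherical rational Cherednik algebra $e\mathcal{H}_{\widetilde m}e$ of type $B_N$. Consequently $e\mathcal{H}_{\widetilde m}e$ acts on ${\rm gr}(Q^{tr}_m(BC_N))$ as a submodule of $Q_{\widetilde m}$, so the quotient $Q_{\widetilde m}/{\rm gr}(Q^{tr}_m(BC_N))$ is an $e\mathcal{H}_{\widetilde m}e$-module, hence (by \cite{BEG}, \cite{GGOR}) semisimple and therefore \emph{free} over $\C[x]^W$; a free module that is torsion has rank zero and is therefore zero. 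Your proposal contains no substitute for this step: the order-by-order lifting you describe is exactly the obstruction you flag, and multiplying by $\prod_\alpha(\alpha,x)^{2k_\alpha}$ only reproduces the generic (rank) statement. To repair the argument you would need either the Cherednik-algebra action on the quotient as above, or an independent a priori computation of the Hilbert series of $Q^{tr}_m(BC_N)$ matching that of $Q_{\widetilde m}$, neither of which is present.
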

\begin{proof}
Let us define following \cite{Chalykh} the difference operator
\begin{equation}
\label{MRop}
H = \sum_{j=1}^N a_j^+(T_j-1) +  \sum_{j=1}^N a_j^-(T_j^{-1}-1), 
\end{equation}
where $T_j f(x) = f(x+ e_j)$, and coefficients 
$$
a_j^{\pm} = (1 \mp \frac{m_1 }{x_j})(1 \mp \frac{m_2 }{x_j \pm \frac12})\prod_{\substack{i=1 \\ {i \ne j}}}^N (1\mp \frac{m_3 }{x_j+x_i}) (1\mp \frac{m_3}{x_j - x_i}),
$$
(there seems to be a small typo in the coefficient $a_j^\pm$ in \cite{Chalykh}, see also \cite{Vrabec}).
For any homogeneous polynomial $\sigma \in \C[x]^W$ we define the  
 difference operator $H_\sigma$ given by the formula
$$
H_\sigma = ad^{\deg \sigma}_H \sigma(x), \quad ad_A B = A B - B A.
$$
Note that 
$$
 H =  {\mathcal L} + \widetilde{\mathcal L},
$$
where
$$ {\mathcal L} = \sum_{i=1}^N \frac{\partial^2}{\partial x_i^2} - \sum_{i=1}^N \frac{2(m_1+m_2)}{x_i} \partial_{x_i} - \sum_{i<j}^N \sum_{\epsilon\in\{\pm1\}} \frac{2 m_3}{x_i-\epsilon x_j}(\partial_{x_i}-\epsilon \partial_{x_j})
$$
and $\widetilde{\mathcal L}$ is an infinite sum of differential operators of degrees less than $\deg \widetilde{\mathcal L} = -2$.
Hence 
$$
H_\sigma = L_\sigma + \widetilde{L_\sigma},
$$
 where
$$
 L_\sigma = ad_{\mathcal L}^{\deg \sigma} \sigma(k)
$$
and  $\widetilde{L_\sigma}$ is a sum of differential operators of degrees strictly less than $\deg {L_\sigma}$.

The operators $L_\sigma\colon \C[x]^W\to \C[x]^W$ and polynomials $\sigma$ as multiplication operators on $\C[x]^W$ generate a faithful representation of the spherical rational Cherednik algebra $e {\mathcal H}_{\widetilde m} e$ of type $B_N$ (\cite{EGsympl}, \cite{Heckman}). It is shown in \cite{Chalykh} that the operator \eqref{MRop} preserves the space of quasi-invariants $Q^{tr}_m(BC_N)$. It follows that the algebra $e{\mathcal H}_{\widetilde m}e$ acts on the space ${\rm gr}(Q^{tr}_{m}(BC_N))$, which is a submodule in $Q_{\widetilde m}$, and 
$Q_{\widetilde m}/{\rm gr}(Q^{tr}_{m}(BC_N))$ is semisimple by general results from \cite{BEG}, \cite{GGOR}, thus free over 
$\C[x]^W$. 
Now let 
$$
\Delta:=\prod_{i=1}^N \left(\prod_{s=1}^{m_1}(x_i^2 -s^2) \prod_{t=1}^{m_2} (x_i^2 - (t -\frac12)^2)\right)
\prod_{i<j}^N \left(
\prod_{r=1}^{m_3} \prod_{\epsilon \in \{\pm 1\}} ((x_i-\epsilon x_j)^2 -  r^2)\right)  \in Q_m^{tr}(BC_N)
$$
and $\Delta_0$ be the leading term of $\Delta$. Then by definition it is clear that 
$$
\Delta p(x) \in Q^{tr}_{m}(BC_N),\ 
\Delta_0 p(x) \in Q_{\widetilde m}
$$
for any $p(x) \in \C[x]$. So we have a sequence of $\C[x]^W$-free modules
$$
{\rm gr}(\C[x]\Delta )=\C[x] \Delta_0 
\subseteq {\rm gr}(Q^{tr}_{m}(BC_N)) \subseteq Q_{\widetilde m}.
$$
Since 
$$
\mbox{rank}_{\C[x]^W} \C[x]\Delta_0 =
\mbox{rank}_{\C[x]^W}Q_{\widetilde m}=|W|,
$$
it holds that $\mbox{rank}_{\C[x]^W} {\rm gr}(Q^{tr}_{m}(BC_N))=|W|$. 
Hence
the free $\C[x]^W$-module 
$Q_{\widetilde m}/{\rm gr}(Q^{tr}_{m}(BC_N))$
is of rank zero, implying that 
$Q_{\widetilde m}/{\rm gr}(Q^{tr}_{m}(BC_N))=0$ (cf. close arguments in \cite{ERF}).
\end{proof}

Theorem \ref{BCthm} has the following implication.

\begin{corollary}\label{corBC}(cf. \cite{ERF})
The algebra $Q_m^{tr}(BC_N)$ is a free module over $B_N$-invariants $\C[x]^W$.
\end{corollary}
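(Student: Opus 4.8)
The plan is to deduce Corollary~\ref{corBC} directly from Theorem~\ref{BCthm} together with the fact, recalled from \cite{ERF} at the start of the subsection, that $Q_{\widetilde m}$ (the homogeneous quasi-invariant module for the reduced root system $B_N$ with multiplicity $\widetilde m$) is free over the invariant ring $\C[x]^W$. The key point is that for a filtered $\C[x]^W$-module the freeness of the associated graded module implies the freeness of the module itself, so it suffices to identify ${\rm gr}(Q^{tr}_m(BC_N))$ with a known free module.

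First I would equip $Q^{tr}_m(BC_N)$ with the filtration by polynomial degree, $Q^{tr}_m(BC_N)=\bigcup_{i\ge 0} F_i$ where $F_i$ consists of the trigonometric quasi-invariants of degree at most $i$; this is compatible with the $\C[x]^W$-action since the basic invariants of $W$ of type $B_N$ are themselves homogeneous, so $\C[x]^W$ is a graded subring. Next I would invoke Theorem~\ref{BCthm}, which gives ${\rm gr}(Q^{tr}_m(BC_N)) = Q_{\widetilde m}$ as a graded $\C[x]^W$-module. Since $Q_{\widetilde m}$ is a free $\C[x]^W$-module by \cite{BC} (as recalled above), one picks a homogeneous $\C[x]^W$-basis $\bar g_1,\dots,\bar g_{|W|}$ of ${\rm gr}(Q^{tr}_m(BC_N))$ and lifts each $\bar g_k$ to an element $g_k\in Q^{tr}_m(BC_N)$ whose leading term is $\bar g_k$. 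A standard filtered-to-graded argument then shows that $g_1,\dots,g_{|W|}$ form a $\C[x]^W$-basis of $Q^{tr}_m(BC_N)$: they generate because any $p\in F_i$ can be corrected, modulo $F_{i-1}$, by a $\C[x]^W$-combination of the $g_k$ matching its leading term, and the argument terminates by induction on degree; they are independent because a nontrivial $\C[x]^W$-relation among the $g_k$ would, on passing to leading terms, produce a nontrivial relation among the $\bar g_k$, contradicting their independence over the graded ring $\C[x]^W$.

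I expect no serious obstacle here: the content is entirely in Theorem~\ref{BCthm}, and what remains is the routine lifting lemma from associated graded modules, analogous to the argument already used in the proof of Theorem~\ref{Catfree}(1) for the module $D(Cat)^W$. The only minor point to check carefully is that the rank counts match, namely $\mathrm{rank}_{\C[x]^W}\, {\rm gr}(Q^{tr}_m(BC_N)) = \mathrm{rank}_{\C[x]^W}\, Q_{\widetilde m} = |W|$, which is immediate from Theorem~\ref{BCthm} and was in fact already verified inside its proof. Thus the statement follows.
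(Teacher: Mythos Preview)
Your proposal is correct and is exactly the argument the paper has in mind: the paper states Corollary~\ref{corBC} as an immediate implication of Theorem~\ref{BCthm} without further details, and what you have written is precisely the standard filtered-to-graded lifting that makes this implication explicit. The only cosmetic point is the attribution of freeness of $Q_{\widetilde m}$: you cite \cite{ERF} ``at the start of the subsection'' and then \cite{BC}, but the relevant background reference in the paper for freeness of the homogeneous quasi-invariants is \cite{BC} (or \cite{EG}, \cite{BEG} in the real case), whereas the remark from \cite{ERF} earlier in Section~\ref{affine} concerns freeness of $Q_m^{tr}$ for reduced root systems, not of $Q_{\widetilde m}$.
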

 
Now let us consider the extended Catalan arrangement $BCCat$ corresponding to the root system ${\mathcal R}= BC_N$. By definition it has defining equation $P=0$, where polynomial $P$ has the form
$$
P(x)=\prod_{i=1}^N \left(x_i \prod_{j=1}^{m_1} (x_i^2 - j^2) \prod_{j=1}^{m_2} (4 x_i^2 - (2 j-1)^2) \right) 
\prod_{i<j}^N 
\prod_{\epsilon \in \{\pm 1\}}
\left((x_i - \epsilon x_j) \prod_{k=1}^{m_3} ((x_i- \epsilon x_j)^2 -  k^2)\right). 
$$
Similarly to Theorem \ref{mth2} we have the following statement for the isotypic component $Q_m^{tr, V}(BC_N) \subseteq Q_m^{tr}(BC_N)$ of $B_N$-module $V$. 
\begin{theorem}
\label{mth3}
$Q_m^{tr, V}(BC_N)\cong D(BCCat)^W\otimes V$ as modules over $(S V^*)^W\otimes \C W$. Moreover, $\text{\rm Hom}_W(V,Q_m^{tr,V}(BC_N)) \cong 
D(BCCat)^W$.
\end{theorem}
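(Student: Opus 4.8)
The plan is to follow the scheme of Theorems \ref{mth1} and \ref{mth2} essentially verbatim. The only genuinely new input is the $BC_N$-version of the Proposition relating $W$-invariant logarithmic vector fields for $Cat$ with trigonometric quasi-invariants; once that is in place, the isomorphism is assembled by the same formal argument (Lemma \ref{onecompall}, Proposition \ref{prop2}, Schur's lemma) that produces Theorems \ref{mth1} and \ref{mth2}, using freeness of $Q_m^{tr}(BC_N)$ over $(SV^*)^W$ from Corollary \ref{corBC}.

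First I would prove: \emph{for $L=\sum_{i=1}^N a_i\partial_i\in\Der^W$ one has $L\in D(BCCat)$ if and only if $a_i\in Q_m^{tr}(BC_N)$ for all $i$.} Each hyperplane of $BCCat$ has the form $\{x\in V\colon(\alpha,x)=j\}$ for a positive $BC_N$-root $\alpha$ and a suitable $j\in\Z$. The defining condition $((\alpha,x)-j)\mid L((\alpha,x))$ says $(\alpha,a(x))=0$ whenever $(\alpha,x)=j$, where $a(x)=(a_1(x),\dots,a_N(x))\in V$; since $s_\alpha(v)=v-(\alpha,v)\alpha^\vee$ this is $(1-s_\alpha)a(x)=0$ on the hyperplane. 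By $W$-invariance of $L$ (as in the reduced case) $a(s_\alpha x)=s_\alpha(a(x))$, so the condition becomes $a_k(x)=a_k(s_\alpha x)$ on $\{(\alpha,x)=j\}$ for every $k$; parametrising the hyperplane as $x=\tilde x+\tfrac12 j\,\alpha^\vee$ with $(\alpha,\tilde x)=0$ gives $s_\alpha x=\tilde x-\tfrac12 j\,\alpha^\vee$, hence $a_k(\tilde x+\tfrac12 j\,\alpha^\vee)=a_k(\tilde x-\tfrac12 j\,\alpha^\vee)$ at $(\alpha,\tilde x)=0$. Running $\alpha$ over the positive roots and using $\alpha^\vee=2e_i$ for $\alpha=e_i$, $\alpha^\vee=e_i$ for $\alpha=2e_i$, and $\alpha^\vee=e_i\mp e_j$ for $\alpha=e_i\pm e_j$, this recovers exactly the difference systems \eqref{quasi1}, \eqref{quasi2}, \eqref{quasi3} (the hyperplanes $x_i=0$, and all $\{(\alpha,x)=0\}$, imposing nothing on a $W$-invariant field). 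The point needing care is the pair of proportional roots $e_i,2e_i$: the hyperplanes $x_i=\pm j$, $1\le j\le m_1$, come from $e_i$ and give \eqref{quasi1}, while the hyperplanes $x_i=\pm(j-\tfrac12)$, $1\le j\le m_2$, encoded in $P$ by the factors $4x_i^2-(2j-1)^2$, come from the root $2e_i$ — with the same reflection $s_{e_i}$ but the normalisation $\alpha^\vee=e_i$ — and give precisely \eqref{quasi2}. The converse implication follows by reading the same chain backwards.

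Next, exactly as in Lemma \ref{onecompall} and Proposition \ref{prop2} (whose proofs use only $W$-invariance of the field and transfer unchanged), for $0\ne L=\sum a_i\partial_i\in D(BCCat)^W$ the assignment $\xi\mapsto L(\xi)$ is a nonzero $\C W$-homomorphism $V^*\to SV^*$; since $V^*\cong V$ for the real reflection group $W$ it is injective with image $\langle a_i\rangle\cong V$, so all $a_i$ lie in $Q_m^{tr,V}(BC_N)$. Identifying $V\cong V^*$ (so that $x_1,\dots,x_N$ becomes a basis of $V$), define $\Theta(\varphi)=\sum_{i=1}^N\varphi(x_i)\partial_i$ for $\varphi\in\mathrm{Hom}_W(V,Q_m^{tr,V}(BC_N))$. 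As in Theorem \ref{mth1}, $\Theta(\varphi)$ is $W$-invariant because $\varphi$ is $\C W$-linear, its components lie in $Q_m^{tr}(BC_N)$, so $\Theta(\varphi)\in D(BCCat)^W$ by the first step; $\Theta$ is $(SV^*)^W$-linear, injective (its value determines $\varphi$ on the $x_i$), and surjective (given $L\in D(BCCat)^W$, the map $\varphi\colon\xi\mapsto L(\xi)$ lies in $\mathrm{Hom}_W(V,Q_m^{tr,V}(BC_N))$ by the preceding sentence and $\Theta(\varphi)=L$). This is the ``Moreover'' isomorphism $\mathrm{Hom}_W(V,Q_m^{tr,V}(BC_N))\cong D(BCCat)^W$. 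Finally, by Schur's lemma $Q_m^{tr,V}(BC_N)\cong\mathrm{Hom}_W(V,Q_m^{tr}(BC_N))\otimes V=\mathrm{Hom}_W(V,Q_m^{tr,V}(BC_N))\otimes V$ as $(SV^*)^W\otimes\C W$-modules (the $\mathrm{Hom}$-factor being $(SV^*)^W$-free by Corollary \ref{corBC}), and combining with the previous isomorphism gives $Q_m^{tr,V}(BC_N)\cong D(BCCat)^W\otimes V$.

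The main obstacle is the first step, and inside it the non-reduced bookkeeping: one must verify that the hyperplanes contributed to $BCCat$ by the short root $e_i$ and by the long root $2e_i$ — which share the reflection $s_{e_i}$ but have different coroots, and which sit at integer resp.\ half-integer positions — together reproduce the two lists of difference conditions \eqref{quasi1} and \eqref{quasi2}, with neither omission nor redundancy. Everything else is a formal transcription of the arguments already used for $Q_m$, $D_m$, $Q_m^{tr}$ and $D(Cat)$.
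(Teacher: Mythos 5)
Your proposal is correct and follows essentially the same route the paper intends: Theorem \ref{mth3} is stated there without proof as an instance of the $D(Cat)$--$Q_m^{tr}$ correspondence, and you supply exactly the missing $BC_N$ bookkeeping (integer hyperplanes from $e_i$, half-integer ones from $2e_i$) before running the Lemma \ref{onecompall}/Proposition \ref{prop2}/Schur argument verbatim. The only blemish is the slip $\alpha^\vee=e_i\mp e_j$ for $\alpha=e_i\pm e_j$ (since $(\alpha,\alpha)=2$ one has $\alpha^\vee=\alpha=e_i\pm e_j$), which does not affect the verification of \eqref{quasi3}.
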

Previous considerations also lead us to the following freeness result.
\begin{theorem}\label{BCth9}
The module $D(BCCat)^W$ is free over invariants $(SV^*)^W$. The module $D(BCCat)$ is free over polynomials $SV^*$.
\end{theorem}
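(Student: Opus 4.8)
The plan is to mirror the strategy already used for the reduced-root-system Catalan arrangements in Theorem~\ref{Catfree}, now feeding in the freeness of $Q_m^{tr}(BC_N)$ from Corollary~\ref{corBC} together with the isomorphism of Theorem~\ref{mth3}. First I would establish the freeness of $D(BCCat)^W$ over $(SV^*)^W$. By Corollary~\ref{corBC} the module $Q_m^{tr}(BC_N)$ is free over $\C[x]^W$, hence so is its isotypic component $Q_m^{tr,V}(BC_N)$ (a direct summand of a free module over the invariant ring, which is a polynomial ring, so projective = free in the graded/filtered setting). Via the isomorphism $Q_m^{tr,V}(BC_N)\cong D(BCCat)^W\otimes V$ of Theorem~\ref{mth3} this gives freeness of $D(BCCat)^W$ over $(SV^*)^W$. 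To control the degrees of a basis I would pass to associated graded modules exactly as in the proof of Theorem~\ref{Catfree}(1): choose a filtration $Q_m^{tr}(BC_N)=\cup F_i$ by polynomial degree, split off $W$-stable complements $F_i = F_{i-1}\oplus G_i$, and use Theorem~\ref{BCthm}, $\mathrm{gr}(Q_m^{tr}(BC_N))=Q_{\widetilde m}$, to identify $\mathrm{gr}$ of the isotypic component with $Q_{\widetilde m}^V$. Thus one obtains a basis $\theta_1,\dots,\theta_N$ of $D(BCCat)^W$ whose leading terms $\mathrm{gr}(\theta_i)$ form a basis of the homogeneous module $D_{\widetilde m}$ attached to the $B_N$-arrangement with multiplicity $\widetilde m$ (free by Theorem~\ref{thmFreeAll}).

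Second, for the freeness of $D(BCCat)$ over $SV^*$ I would invoke Saito's criterion. The vector fields $\theta_1,\dots,\theta_N$ are independent over $(SV^*)^W$ by construction; since their leading terms $\mathrm{gr}(\theta_i)\in D_{\widetilde m}$ are independent over $SV^*$ (Theorem~\ref{thmFreeAll}), a standard leading-term argument upgrades this to independence of the $\theta_i$ over $SV^*$. It then remains to check the degree/determinant condition of Saito's criterion for the arrangement $BCCat$ with defining polynomial $P$: one needs $\det(\theta_i(x_j)) = c\,P(x)$ for a nonzero constant $c$, equivalently $\sum_i \deg\theta_i = |BCCat| = \deg P$. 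Here the leading term of $\det(\theta_i(x_j))$ equals $\det(\mathrm{gr}(\theta_i)(x_j))$, which by Saito's criterion applied to the free homogeneous multiarrangement $(\A_{B_N},\widetilde m\,n)$ (or $\widetilde m\,n+1$, matching the multiplicity built into the Catalan picture) is a nonzero constant times $\prod_{H}\alpha_H^{r(H)}$; comparing degrees on both sides and counting hyperplanes of $BCCat$ gives the required equality, and nonvanishing of the determinant at a generic point then follows from the independence over $SV^*$ just established.

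A cleaner alternative for the second part, which I would probably prefer, is to avoid recomputing the determinant directly and instead copy the homogenization trick of Theorem~\ref{Catfree}(2) verbatim: for $\theta\in D(BCCat)$ set $\theta' := z^{\deg\theta}\sum_i f_i(x_1/z,\dots,x_N/z)\,\partial_{x_i}$ and check $\theta'\in D(cBCCat)$ for the coned arrangement; then $\theta_1',\dots,\theta_N'$ together with the Euler field $\theta_E = \sum_i x_i\partial_{x_i} + z\partial_z$ are independent over $SV^*[z]$ (using independence of the $\theta_i$ over $SV^*$ and the graded comparison with $D_{\widetilde m}$), and a degree count $\sum_i \deg\theta_i = |BCCat|$ together with Saito's criterion for the \emph{central} arrangement $cBCCat$ yields freeness of $D(cBCCat)$; finally deconing (the arrangement $BCCat$ is the deconing of $cBCCat$ with respect to the hyperplane $\Pi^z$) gives freeness of $D(BCCat)$. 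One should note the caveat in the Remark above that deconing does not in general transfer freeness in the other direction, so it is important that we prove freeness of the cone first and then decone, not vice versa.

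\textbf{Main obstacle.} The genuine content is all upstream and already in place: Corollary~\ref{corBC} and Theorem~\ref{BCthm} are where the Cherednik-algebra input does the work. Within this proof the only delicate point is the degree count $\sum_{i=1}^N \deg\theta_i = |BCCat|$. This amounts to knowing that the exponents of the free multiarrangement $(\A_{B_N}, \widetilde m\, n)$ (equivalently $(\A_{B_N},\widetilde m\,n+1)$, depending on whether one works with $D_{\widetilde m}$ or $\widetilde D_{\widetilde m}$) sum to the number of hyperplanes of $BCCat$; by Theorem~\ref{thmFreeAll} the exponent sum equals $\sum_{H\in\A}\widetilde m_H n_H + |\A|$ (or the analogous $\sum \widetilde m_H n_H$), and one must verify that the passage from the reduced $B_N$ root system with multiplicity $\widetilde m = m_1+m_2$ on the short roots to the non-reduced $BC_N$ Catalan picture, where the short-root and double-short-root hyperplanes split into $2m_1+2m_2+1$ affine hyperplanes, exactly accounts for this count. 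This is a finite bookkeeping check on $BC_N$ root data rather than a conceptual difficulty, but it is where I would be most careful to get the normalization of the multiplicity ($\widetilde m\,n$ versus $\widetilde m\,n+1$, and the contribution of the $x_i=0$ versus $2x_i = \pm(2j-1)$ hyperplanes) exactly right.
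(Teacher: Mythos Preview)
Your proposal is correct and matches the paper's approach essentially line for line: freeness of $D(BCCat)^W$ from Corollary~\ref{corBC} and Theorem~\ref{mth3}, a basis $\theta_i$ whose leading terms form an invariant basis of $D_{\widetilde m}$ via Theorem~\ref{BCthm}, and then (affine) Saito's criterion through the degree count $\sum_i\deg\theta_i=\sum_{\Pi_\alpha\in\A_{B_N}}(2\widetilde m(\alpha)+1)=\deg P$ together with nonvanishing of the leading term of $\det(\theta_i(x_j))$.

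Two minor remarks. First, the paper chooses your first option, applying Saito's criterion directly to the affine arrangement $BCCat$; the cone $cBCCat$ is handled afterwards in a separate statement using the already-established basis, so your ``cleaner alternative'' reverses the paper's logical order (though it would also work). Second, your normalization worry is resolved on the $D_{\widetilde m}$ side (multiplicity $2\widetilde m+1$, since $n_H=2$ throughout): the $m_1$ and $m_2$ affine families over a short root $e_i$ together contribute $2m_1+2m_2+1=2\widetilde m(e_i)+1$ hyperplanes to $BCCat$, exactly matching the exponent sum from Theorem~\ref{thmFreeAll}.
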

\begin{proof}
The first claim follows from Corollary \ref{corBC} and Theorem \ref{mth3}. 
Moreover, it follows from Theorem \ref{BCthm} 
that there exists a free basis $\theta_1, \ldots, \theta_N$ of $(SV^*)^W$--module $D(BCCat)^W$ such that each component of $\theta_i$ has the same degree, and vector fields $\tilde \theta_i$ obtained by taking the highest order terms in the components of $\theta_i$ give an invariant free basis of $SV^*$--module $D_{\widetilde m}$ for the $B_N$ arrangement.
For such a basis $\theta_i$ we get that determinant of the matrix $M$ of coefficients $M =(\theta_i(x_j))_{i,j =1}^N$, is a non-zero scalar  multiple of $P(x)$.
Indeed, the highest order term of $\det M$ is given by replacing $\theta_i(x_j)$ with $\tilde \theta_i(x_j)$ in $M$, and this determinant is non-zero. We also have
$$
\deg \det M = \sum_{i=1}^N \deg \theta_i = \sum_{i=1}^N \deg \tilde\theta_i  = \sum_{\Pi_\alpha \in \mathcal A} (2 \widetilde m(\alpha) +1) = \deg P, 
$$
where $\mathcal A$ is the $B_N$ arrangement, and $\deg \theta_i, \deg \tilde \theta_i$ denote the degrees of the components of the corresponding vector field. Hence $\det M$ is a scalar multiple of $P$. 
It follows that the arrangement $BCCat$ is free by (affine) Saito's  criterion.
\end{proof}

Finally, let us consider the coning $cBCCat$ which is an arrangement in $\C^{N+1}$ with defining polynomial
\begin{multline*}
P_c(x,z)=z \prod_{i=1}^N \left(x_i \prod_{j=1}^{m_1} (x_i^2 - j^2 z^2) \prod_{j=1}^{m_2} (4 x_i^2 - (2 j-1)^2 z^2) \right) \times \\
\prod_{i<j}^N 
\prod_{\epsilon \in \{\pm 1\}}
\left((x_i - \epsilon x_j) \prod_{k=1}^{m_3} ((x_i-\epsilon x_j)^2 -  k^2 z^2)\right), 
\end{multline*}
where $z \in \R$. 
\begin{theorem} \label{BCth10}
The arrangement $cBCCat$ is free.
\end{theorem}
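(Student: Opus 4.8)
The plan is to run exactly the coning argument of Theorem~\ref{Catfree}(2), but for the affine arrangement $BCCat$ in place of $Cat$, feeding in the special free basis of $D(BCCat)^W$ produced in the proof of Theorem~\ref{BCth9}. Since $cBCCat$ is a central arrangement in $\C^{N+1}$, freeness will follow from Saito's criterion once we exhibit $N+1$ logarithmic vector fields whose coefficient determinant is a nonzero scalar multiple of the defining polynomial $P_c(x,z)$.

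Concretely, first I would recall from the proof of Theorem~\ref{BCth9} the homogeneous free basis $\theta_1,\dots,\theta_N$ of the $(SV^*)^W$-module $D(BCCat)^W$: writing $\theta_i=\sum_{j=1}^N f_{ij}\partial_{x_j}$, the components of $\theta_i$ all have the same degree $\deg\theta_i$, one has $\sum_i\deg\theta_i=\deg P$, and the matrix $M=(\theta_i(x_j))_{i,j=1}^N=(f_{ij})$ satisfies $\det M=c\,P(x)$ for some nonzero $c\in\C$. For each $i$ I would form the homogenized field $\theta_i'=z^{\deg\theta_i}\sum_{j=1}^N f_{ij}(x/z)\,\partial_{x_j}$ as in \eqref{thetaprime}; the same elementary verification used in the proof of Theorem~\ref{Catfree}(2) gives $\theta_i'\in D(cBCCat)$, and the Euler field $\theta_E=\sum_{i=1}^N x_i\partial_{x_i}+z\partial_z$ lies in $D(cBCCat)$ because that arrangement is central. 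Then I would compute the $(N+1)\times(N+1)$ coefficient matrix of $\theta_1',\dots,\theta_N',\theta_E$ with respect to $\partial_{x_1},\dots,\partial_{x_N},\partial_z$: it is block lower-triangular, with $\partial_z$-column $(0,\dots,0,z)^T$ and top-left block $(z^{\deg\theta_i}f_{ij}(x/z))_{i,j}$, so its determinant equals $z\cdot z^{\sum_i\deg\theta_i}\det\big(f_{ij}(x/z)\big)=z\cdot z^{\deg P}(cP)(x/z)$. Identifying $z^{\deg P}P(x/z)$ with $z^{-1}P_c(x,z)$, this determinant is $c\,P_c(x,z)$, a nonzero scalar times the defining polynomial; Saito's criterion (applied in $\C^{N+1}$) then says $\theta_1',\dots,\theta_N',\theta_E$ form a free basis of $D(cBCCat)$, so $cBCCat$ is free.

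The only step needing genuine care — more a bookkeeping obstacle than a conceptual one — is the identity $z^{\deg P}P(x/z)=z^{-1}P_c(x,z)$. Here one uses that homogenization is multiplicative and checks factor by factor that $x_i$, $x_i^2-j^2$, $4x_i^2-(2j-1)^2$, $x_i\mp x_j$ and $(x_i\mp x_j)^2-k^2$ homogenize to the corresponding factors $x_i$, $x_i^2-j^2z^2$, $4x_i^2-(2j-1)^2z^2$, $x_i\mp x_j$ and $(x_i\mp x_j)^2-k^2z^2$ of $P_c/z$, together with the degree identity $\sum_i\deg\theta_i=\deg P$ already recorded in the proof of Theorem~\ref{BCth9}. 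One should also note that $cBCCat$ is a simple arrangement, so $P_c$ is indeed $\prod_{H\in cBCCat}\alpha_H$ up to a nonzero scalar, which is what Saito's criterion requires. Everything else is a direct transcription of the $Cat$ argument.
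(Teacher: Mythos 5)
Your proposal is correct and follows essentially the same route as the paper: homogenize the special basis of $D(BCCat)$ from Theorem~\ref{BCth9} via \eqref{thetaprime}, adjoin the Euler field, and apply Saito's criterion, using that the homogenization of $P$ is $P_c/z$ and that the degrees add up. The only cosmetic difference is that you evaluate the $(N+1)\times(N+1)$ determinant explicitly via its block-triangular form, whereas the paper records the $N\times N$ determinant and the degree count separately; these are equivalent.
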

\begin{proof}
Let us consider the basis $\theta_1, \ldots, \theta_N$ of $D(BCCat)$ as in the proof of Theorem \ref{BCth9}. It is easy to see that vector fields $\theta_1', \ldots, \theta_N'$ defined by \eqref{thetaprime} belong to the module of logarithmic vector fields $D(cBCCat)$. Determinant $\det M'$ of the corresponding matrix of coefficients $M'$, $M'_{ij} = \theta_i'(x_j)$,   is a nonzero scalar multiple of $P_c(x, z)/z$.
Note that the sum of degrees of (coefficients of) vector fields
 $\theta_1', \ldots, \theta_N', \theta_E=\sum_{i=1}^N x_i \partial_{x_i} + z \partial_z$ 
 is equal to the degree of the polynomial $P_c(x, z)$
 since 
 the sum of degrees 
 $$
 \sum_{i=1}^N \deg\theta_i'=  \sum_{i=1}^N \deg\theta_i'|_{z=1}
= \sum_{i=1}^N \deg\theta_i=
 \deg P(x) = \deg P_c(x,z)-1.
 $$
 Hence
 these vector fields form a free basis of $D(cBCCat)$ over $\C[x_1, \ldots, x_N, z]$ by Saito's criterion. 
\end{proof}

The proofs of Theorems \ref{BCth9}, \ref{BCth10} also give us the following statement on the degrees of elements in a homogeneous free basis of the arrangement $cBCCat$.
\begin{proposition}\label{propdegbc}
The exponents of the arrangement $cBCCat$ are the exponents of the reflection arrangement $B_N$ with multiplicity $\widetilde m$ together with 1.
\end{proposition}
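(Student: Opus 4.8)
The plan is to read off the degrees from the free bases already constructed in the proofs of Theorems \ref{BCth9} and \ref{BCth10}. Recall that in the proof of Theorem \ref{BCth10} one produces a homogeneous free basis of $D(cBCCat)$ over $\C[x_1,\ldots,x_N,z]$ consisting of $\theta_1',\ldots,\theta_N'$, the homogenizations \eqref{thetaprime} of a basis $\theta_1,\ldots,\theta_N$ of $D(BCCat)^W$ as in Theorem \ref{BCth9}, together with the Euler field $\theta_E=\sum_{i=1}^N x_i\partial_{x_i}+z\partial_z$. Since $cBCCat$ is free (Theorem \ref{BCth10}), its exponents are well defined as the multiset $(\deg\theta_1',\ldots,\deg\theta_N',\deg\theta_E)$, so it suffices to compute these $N+1$ numbers.

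First I would note that $\deg\theta_E=1$, which accounts for the ``together with $1$'' in the statement. Next, by the construction in Theorem \ref{BCth9}, the basis $\theta_1,\ldots,\theta_N$ of $D(BCCat)^W$ is chosen so that all $N$ polynomial components of a given $\theta_i$ have one and the same degree, say $d_i$, and the vector fields $\tilde\theta_i$ obtained from $\theta_i$ by retaining the highest order term in each component form a homogeneous $W$-invariant free basis of $D_{\widetilde m}$ for the $B_N$ reflection arrangement. In particular $\deg\tilde\theta_i=d_i$, and by definition the multiset $(d_1,\ldots,d_N)=(\deg\tilde\theta_1,\ldots,\deg\tilde\theta_N)$ is precisely the tuple of exponents of the reflection arrangement $B_N$ with multiplicity $\widetilde m$.

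It then remains to check that $\deg\theta_i'=d_i$. This follows directly from the definition \eqref{thetaprime}: if $f_j$ is a component of $\theta_i$, a polynomial of degree $d_i$, then $z^{d_i}f_j(x_1/z,\ldots,x_N/z)$ is a homogeneous polynomial of degree $d_i$ in $\C[x_1,\ldots,x_N,z]$, so every component of $\theta_i'$ is homogeneous of degree $d_i$ and hence $\deg\theta_i'=d_i$. Assembling these facts, the exponents of $cBCCat$ are $(d_1,\ldots,d_N,1)$, i.e.\ the exponents of the reflection arrangement $B_N$ with multiplicity $\widetilde m$ together with $1$, as claimed.

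As for the main obstacle: there is no serious difficulty here, since the relevant bases and their degree bookkeeping are already in place in the proofs of Theorems \ref{BCth9} and \ref{BCth10}. The one point that needs care is the compatibility of the coning map \eqref{thetaprime} with degrees, namely verifying that $\theta_i'$ is genuinely homogeneous of degree equal to the \emph{common} degree $d_i$ of the components of $\theta_i$; this is exactly why the proof of Theorem \ref{BCth9} was arranged to produce a basis each of whose elements has all components of equal degree, and the same feature is what makes the argument above go through cleanly.
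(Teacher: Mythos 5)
Your proof is correct and follows exactly the route the paper intends: the paper offers no separate argument for Proposition \ref{propdegbc} beyond the remark that it follows from the proofs of Theorems \ref{BCth9} and \ref{BCth10}, and your degree bookkeeping (the Euler field contributing $1$, the components of each $\theta_i$ having a common degree $d_i$ equal to $\deg\tilde\theta_i$, and the homogenization \eqref{thetaprime} preserving that degree) is precisely the intended filling-in of that remark.
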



\begin{example}
Let us consider quasi-invariants $Q_{m}^V(B_2)$, where the multiplicity function $m(e_i)=2$, $m(e_i\pm e_j)=1$. Free basis  over $B_2$-invariants is given by polynomials
$$
p_1=3 x_1^7 - 7 x_1^5 x_2^2, \quad q_1=5 x_1^9 - 9 x_1^7  x_2^2
$$
and $p_2=w(p_1), q_2=w(q_1)$, where $w$ is an elementary transposition $(1,2)$. The corresponding basis of the logarithmic vector fields is given by
$$
\theta_1=p_1 \partial_1 + p_2 \partial_2, \quad
\theta_2=q_1 \partial_1 + q_2 \partial_2. 
$$
The corresponding non-homogeneous quasi-invariants $Q_m^{tr, V}$ have the basis
$$
p_1'=3 x_1^7 - 7 x_1^5 x_2^2
 - 14 x_1^5 + 35 x_1^3 x_2^2 + 7 x_1^3 - 28 x_1 x_2^2 +
 4 x_1, 
 $$
 $$
 q_1'=5 x_1^9 - 9 x_1^7  x_2^2
 -42 x_1^7 + 63 x_1^5 x_2^2 + 105 x_1^5 - 126 x_1^3 x_2^2 - 68 x_1^3 + 
 72 x_1 x_2^2
$$
and $p_2'=w(p_1'), q_2'=w(q_1')$.
The corresponding basis of the logarithmic vector fields for the extended Catalan arrangement is given by
$$
\theta_1'=p_1' \partial_1 + p_2' \partial_2, \quad
\theta_2'=q_1' \partial_1 + q_2' \partial_2. 
$$
We also have non-homogeneous quasi-invariants $Q_{1}^{tr, V}(BC_2)$, where all three multiplicity parameters are equal to 1. This space has a free basis given by
$$
{\widetilde p}_1 =3 x_1^7 - 7 x_1^5 x_2^2+
\frac14  (-35 x_1^5 + 35 x_1^3 x_2^2 + 28 x_1^3 - 7 x_1 x_2^2 - 5 x_1),
$$ 
$$
{\widetilde q}_1 = 5 x_1^9 - 9 x_1^7  x_2^2 +
\frac14  (-57 x_1^7 - 7 x_1^5 x_2^2 + 49 x_1^5 + 56 x_1^3 x_2^2 - 13 x_1^3 - 13 x_1 x_2^2 + x_1),
$$ 
and ${\widetilde p}_2=w({\widetilde p}_1), {\widetilde q}_2=w({\widetilde q}_1)$.
The corresponding basis of the logarithmic vector fields is given by
$$
{\widetilde \theta}_1= {\widetilde p}_1 \partial_1 + {\widetilde p}_2 \partial_2, \quad
{\widetilde \theta}_2 = {\widetilde q}_1 \partial_1 + {\widetilde q}_2 \partial_2. 
$$
\end{example}

\section{Concluding Remarks}

We believe that similar relations between logarithmic forms, logarithmic polyvector fields and  quasi-invariants in suitable representations hold. We hope to explore this elsewhere.

\section{Acknowledgements}

We would like to thank the organisers of the programme ``Perspectives in Lie theory" for their kind hospitality; a key part of this work was done during the visit of three of us to the programme in Pisa in 2015. 
M.F. is grateful to O. Chalykh for helpful clarifications on \cite{BC}, and we are grateful to C. Stump, G. R\"ohrle and M. Vrabec for useful discussions. 
T. A. and M. Y. are 
partially supported by JSPS KAKENHI Grant Numbers JP16H03924, JP18H01115. 
We also acknowledge support by the Research Institute for Mathematical Sciences, an
International Joint Usage/Research Center located in Kyoto University, which we visited in 2018.

\end{document}